\documentclass[a4paper,11pt,twoside]{amsart}
\usepackage[english]{babel}
\usepackage[utf8]{inputenc}

\usepackage[a4paper,inner=3cm,outer=3cm,top=4cm,bottom=4cm,pdftex]{geometry}
\usepackage{fancyhdr}
\pagestyle{fancy}

\fancyhf{}
\fancyhead[CO]{\scshape Hardy--Littlewood--Chowla with a Siegel zero}
\fancyhead[CE]{\scshape Terence Tao and Joni Ter\"av\"ainen}
\fancyhead[LE]{\thepage}
\fancyhead[RO]{\thepage}
\fancyhead[LO,RE]{}
\setlength{\headheight}{13.6pt}
\usepackage{titlesec}
\usepackage{color}
\usepackage{bold-extra}
\usepackage{ mathrsfs }
\titleformat{\section}{\normalfont\scshape\centering}{\thesection}{1em}{}
  \titleformat{\subsection}{\bfseries}{\thesubsection}{1em}{}

\usepackage{comment}
\usepackage{graphics}
\usepackage{aliascnt}
\usepackage[pdftex,citecolor=green,linkcolor=red]{hyperref}

\usepackage{enumerate}
\usepackage{amsmath}
\usepackage{amsfonts}
\usepackage{amssymb}
\usepackage{amsthm}
\usepackage{comment}
\usepackage{mathtools}

\newtheorem{theorem}{Theorem}[section]
\newtheorem{corollary}[theorem]{Corollary}

\newtheorem{lemma}[theorem]{Lemma}
\newtheorem{proposition}[theorem]{Proposition}
\theoremstyle{definition}
\newtheorem{definition}[theorem]{Definition}
\newtheorem{remark}[theorem]{Remark}
\newtheorem{conjecture}[theorem]{Conjecture}

\numberwithin{equation}{section}

\newcommand\eps{\varepsilon}

\renewcommand{\Re}{\textnormal{Re}}

\newcommand\E{\mathbb{E}}
\newcommand\R{\mathbb{R}}
\newcommand\Z{\mathbb{Z}}
\newcommand\N{\mathbb{N}}
\newcommand\C{\mathbb{C}}

\newcommand\Siegel{{\textnormal{Siegel}}}

\newcommand{\asum}{\sideset{}{^{\ast}}\sum}
\setlength\evensidemargin\oddsidemargin

\begin{document}
\title[Hardy--Littlewood--Chowla with a Siegel zero]{The Hardy--Littlewood--Chowla conjecture in the presence of a Siegel zero}

\author[Terence Tao]{Terence Tao}
\address{Department of Mathematics, University of California, Los Angeles, CA 90095-1555,USA}
\email{tao@math.ucla.edu}

\author[Joni Ter\"{a}v\"{a}inen]{Joni Ter\"{a}v\"{a}inen}
\address{Mathematical Institute, University of Oxford, Oxford, UK}
\address{Department of Mathematics and Statistics, University of Turku, Turku, Finland}
\email{joni.p.teravainen@gmail.com}

\begin{abstract} Assuming that Siegel zeros exist, we prove a hybrid version of the Chowla and Hardy--Littlewood prime tuples conjectures. Thus, for an infinite sequence of natural numbers $x$, and any distinct integers $h_1,\dots,h_k,h'_1,\dots,h'_\ell$, we establish an asymptotic formula for
$$\sum_{n\leq x}\Lambda(n+h_1)\cdots \Lambda(n+h_k)\lambda(n+h_{1}')\cdots \lambda(n+h_{\ell}')$$
for any $0\leq k\leq 2$ and $\ell \geq 0$. Specializing to either $\ell=0$ or $k=0$, we deduce the previously known results on the Hardy--Littlewood (or twin primes) conjecture and the Chowla conjecture under the existence of Siegel zeros, due to Heath-Brown and Chinis, respectively. The range of validity of our asymptotic formula is wider than in these previous results.
\end{abstract}

\subjclass[2020]{11N37, 11N36}

\maketitle

\section{Introduction}

\subsection{The Hardy--Littlewood--Chowla conjecture and Siegel zeroes}

Let $\lambda \colon \N \to \{-1,+1\}$ denote the Liouville function.  We have the following well known conjecture of Chowla~\cite{chowla}:

\begin{conjecture}[Chowla's conjecture]\label{chowla-conj}  Let $h'_1,\dots,h'_\ell$ be distinct fixed natural numbers for some fixed $\ell \geq 1$.  Then\footnote{See Section~\ref{notation-sec} for our conventions on asymptotic notation.}
$$ \E_{n \leq x} \lambda(n+h'_1) \cdots \lambda(n+h'_\ell) = o(1)$$
as $x \to \infty$.  
\end{conjecture}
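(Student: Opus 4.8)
This is Chowla's conjecture, which for $\ell \geq 2$ remains one of the central open problems in multiplicative number theory; the only case that is presently unconditional is $\ell=1$, where $\E_{n\le x}\lambda(n)=o(1)$ is equivalent to the prime number theorem. For $\ell \geq 2$ the natural plan is two-staged: first establish the logarithmically averaged variant
$$\sum_{n\le x}\frac{\lambda(n+h'_1)\cdots\lambda(n+h'_\ell)}{n}=o(\log x),$$
and then attempt to strip off the harmonic weight so as to recover the Ces\`aro average $\E_{n\le x}$.

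Within the first stage the argument should bifurcate according to the parity of $\ell$. When $\ell$ is odd, one exploits that $\lambda$ is non-pretentious --- it fails to correlate with any product of a Dirichlet character and an archimedean character $n^{it}$ --- so that the correlation carries no ``major arc'' main term; combining this with the inverse theory for the Gowers $U^{s}$-norms and the Matom\"aki--Radziwi\l\l\ theorem governing multiplicative functions in almost all short intervals yields the logarithmic Chowla conjecture for every odd $\ell$ (Tao and Ter\"av\"ainen). When $\ell$ is even the tool is the entropy decrement argument: complete multiplicativity $\lambda(pn)=-\lambda(n)$ forces the product $\lambda(n+h'_1)\cdots\lambda(n+h'_\ell)$ to be approximately invariant under the dilation $n\mapsto pn$ for a positive proportion of primes $p$ in each dyadic block and a positive proportion of $n$; feeding this approximate dilation invariance into an entropy inequality, together with the two-point logarithmic Elliott estimates, rules out a nonzero limiting correlation, but only for $\ell=2$ (the even cases $\ell \geq 4$ are open even logarithmically).

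The decisive obstacle is the second stage: the passage from the logarithmic average to the natural average. The entropy decrement argument is purely qualitative and supplies no effective rate, and logarithmic averaging intrinsically loses control over individual dyadic scales. Even a quantitative refinement to $o(\log x/\log\log x)$ in the logarithmic estimate --- itself unknown for $\ell \geq 2$ --- would, via summation by parts, recover $\E_{n\le x}(\cdots)=o(1)$ only for almost all $x$ rather than for every $x$. Absent a genuinely new input --- for example the rigidity imposed by a Siegel zero, which is exactly what the present paper leverages to obtain the conclusion along a sparse sequence of $x$ --- this gap remains open, so that Chowla's conjecture as stated is not yet a theorem; what follows in the paper is the corresponding statement with $x$ restricted to an infinite sequence and with the existence of Siegel zeros assumed.
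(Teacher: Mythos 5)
You have correctly identified that this statement is an open conjecture which the paper does not prove: it is stated as Conjecture \ref{chowla-conj} precisely because only the case $\ell=1$ (equivalent to the prime number theorem) is known, and your survey of the partial progress --- the logarithmically averaged cases $\ell=2$ and odd $\ell$, and the obstruction in passing from logarithmic to Ces\`aro averaging --- accurately reflects the discussion and citations in the paper. No further review is needed, since there is no proof in the paper to compare against.
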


Here and in the sequel, $n$ is understood to range over natural numbers, and we use the averaging notation $\E_{n \in A} f(n) \coloneqq \frac{1}{|A|} \sum_{n \in A} f(n)$ for any set $A$ of a finite cardinality $|A|$.  The reasons for the primes in the notation $h'_1,\dots,h'_\ell$ is for compatibility with Conjecture~\ref{hlc} below.

For $\ell=1$ Chowla's conjecture is equivalent to the prime number theorem, but the conjecture is open for all $\ell \geq 2$, although a slightly weaker ``logarithmically averaged'' conjecture is known to hold for $\ell=2$~\cite{tao-chowla} or for odd $\ell$~\cite{tt-odd, tt-structure}.  All the discussion here concerning the Liouville function $\lambda$ has a counterpart for the M\"obius function $\mu$, but for simplicity of exposition we restrict attention to the Liouville function here.

The analogous conjecture for the von Mangoldt function $\Lambda \colon \N \to \R^+$ is the well known \emph{prime tuples conjecture} of Hardy and Littlewood~\cite{hardy-littlewood}:

\begin{conjecture}[Hardy--Littlewood prime tuples conjecture]\label{hl-conj}   Let $h_1,\dots,h_k$ be distinct fixed natural numbers for some fixed $k \geq 0$.  Then
$$ \E_{n \leq x} \Lambda(n+h_1) \cdots \Lambda(n+h_k) = {\mathfrak S} + o(1)$$
as $x \to \infty$, where the \emph{singular series} ${\mathfrak S}$ is defined by the formula
\begin{equation}\label{ss-def}
 {\mathfrak S} \coloneqq \prod_p \beta_p,
\end{equation}
the local factors $\beta_p$ are defined by
\begin{equation}\label{betap-def}
\beta_p \coloneqq \E_{n \in \Z/p\Z} \Lambda_p(n+h_1) \cdots \Lambda_p(n+h_k)=\left(1-\frac{1}{p}\right)^{-k}\left(1-\frac{|\{h_1,\ldots, h_k\}\pmod p|}{p}\right)
\end{equation}
and $\Lambda_p \colon \Z/p\Z \to \R^+$ is the local von Mangoldt function $\Lambda_p(n) \coloneqq \frac{p}{p-1} 1_{n \neq 0\ (p)}$.  (In this paper we adopt the convention that the empty product is equal to $1$.)
\end{conjecture}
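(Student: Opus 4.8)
The plan is to use the Hardy--Littlewood circle method; I should say at the outset that this yields an unconditional proof only in the degenerate cases $k\le 1$, and that the case $k\ge 2$ of Conjecture \ref{hl-conj} is the classical open prime $k$-tuples problem, so what follows is necessarily a heuristic derivation together with an honest account of where it stalls. For $k=0$ the left-hand side is $\E_{n\le x}1=1$ and $\mathfrak S$ is the empty product $1$; for $k=1$ the left-hand side is $\E_{h_1<m\le x+h_1}\Lambda(m)=1+o(1)$ by the prime number theorem, while $\beta_p=(1-1/p)^{-1}(1-1/p)=1$ for every $p$, so $\mathfrak S=1$ again. Assume henceforth $k\ge 2$, and write $S(\alpha)\coloneqq\sum_{n\le x}\Lambda(n)e(n\alpha)$. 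Encoding the constraints $m_j=n+h_j$ via the orthogonality relation $1_{m=0}=\int_\T e(m\alpha)\,d\alpha$, one rewrites $\sum_{n\le x}\prod_{j=1}^k\Lambda(n+h_j)$ as a $(k-1)$-fold integral over $\T^{k-1}$ of a product of $k$ copies of $S$ evaluated at linearly related arguments, weighted by a phase depending only on the differences $h_i-h_j$.

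The next step is the major/minor arc dissection. On the major arcs, i.e.\ narrow neighbourhoods of rationals $a/q$ with $q\le(\log x)^A$, I would replace $\Lambda(n)$ by $\frac{q}{\varphi(q)}$ times the indicator of $(n,q)=1$ --- a legitimate model for such $q$ by the Siegel--Walfisz theorem --- and evaluate the resulting exponential sums by Ramanujan and Gauss sum identities. This produces, prime by prime, the local factor $\beta_p=\E_{n\in\Z/p\Z}\Lambda_p(n+h_1)\cdots\Lambda_p(n+h_k)$ of \eqref{betap-def}; since $|\{h_1,\dots,h_k\}\ (\mathrm{mod}\ p)|=k$ once $p>\max_i|h_i|$, one has $\beta_p=1+O_k(1/p^2)$, so the Euler product $\prod_p\beta_p$ converges absolutely to the singular series $\mathfrak S$ of \eqref{ss-def}, and the total major-arc contribution is $(\mathfrak S+o(1))x$. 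This part is entirely routine.

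The main obstacle --- in fact the whole difficulty --- is the minor arcs: one must show that the integral over $\T^{k-1}$ with the major arcs removed contributes $o(x)$. For $k=2$ this amounts to a bound on the relevant $\int_\T|S(\alpha)|^2\cdots$ after excising the peaks, which requires cancellation in $\sum_{n\le x}\Lambda(n)e(n\alpha)$ quantitatively stronger than Vinogradov's method supplies, and for $k\ge 3$ the control needed is stronger still; no such estimate is known, which is precisely why Conjecture \ref{hl-conj} is open for all $k\ge 2$. Hence I do not expect to close this step unconditionally. A viable substitute --- and the strategy of the present paper --- is to assume the existence of a Siegel zero: the associated exceptional real character forces $\Lambda$ to be unusually well approximated by an explicit ``Siegel model'', and this extra rigidity makes the minor-arc estimates tractable, at least along the sparse sequence of scales $x$ controlled by the Siegel zero and for $k\le 2$ (and, in the joint form, with arbitrarily many additional $\lambda$ factors), which is exactly the asymptotic announced in the abstract.
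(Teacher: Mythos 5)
This statement is labelled as a \emph{conjecture} in the paper, and the paper offers no proof of it; the text explicitly notes that it is trivial for $k=0$, equivalent to the prime number theorem for $k=1$, and open for all $k\ge 2$. Your proposal is correct and matches the paper's own assessment exactly: your verification of the cases $k=0$ and $k=1$ (via the empty-product convention and $\beta_p\equiv 1$, respectively) is sound, your circle-method sketch correctly locates the obstruction in the minor arcs, and you rightly identify that the paper's actual contribution is the conditional hybrid result under a Siegel zero rather than a proof of this conjecture.
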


It is not difficult to show the asymptotic
\begin{equation}\label{betap-asym}
 \beta_p = 1 + O\left( \frac{1}{p^2} \right),
\end{equation}
so the product in~\eqref{ss-def} converges, though it could vanish if the $h_1,\dots,h_k$ cover a complete set of residues modulo $p$ for some prime $p$.  Conjecture~\ref{hl-conj} is trivial for $k=0$ and equivalent to the prime number theorem for $k=1$, but is open for all other values of $k$, with the $k=2$ case already implying the notorious twin prime conjecture.

It is natural to unify Conjecture~\ref{chowla-conj} and Conjecture~\ref{hl-conj} as follows.

\begin{conjecture}[Hardy--Littlewood--Chowla conjecture]\label{hlc}  Let $k, \ell \geq 0$, and let $h_1,\dots,h_k$, $h'_1,\dots,h'_\ell$ be distinct fixed natural numbers.  Then
$$ \E_{n \leq x} \Lambda(n+h_1) \cdots \Lambda(n+h_k) \lambda(n+h'_1) \cdots \lambda(n+h'_\ell) = {\mathfrak S} + o(1)$$
as $x \to \infty$, where ${\mathfrak S}$ is defined by~\eqref{ss-def} when $\ell = 0$ and is equal to zero otherwise.
\end{conjecture}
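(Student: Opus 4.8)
The plan is to place Conjecture~\ref{hlc} inside the circle-method and sieve framework that already governs its two special cases, reducing it to a combination of the Hardy--Littlewood analysis for the von Mangoldt factors and a weighted, higher-order version of Chowla's conjecture for the Liouville factors. As a consistency check on the target value, setting $\ell=0$ recovers Conjecture~\ref{hl-conj} with main term $\mathfrak{S}=\prod_p\beta_p$, whereas setting $k=0$ recovers Conjecture~\ref{chowla-conj} with main term $0$; hence the asserted answer --- equal to $\prod_p\beta_p$ when $\ell=0$ and to $0$ when $\ell\ge1$ --- is the unique value consistent with both degenerations, and the real content lies in the interpolating estimate. First I would expand each $\Lambda(n+h_i)$ by a Vaughan- or Heath--Brown-type identity into $O(1)$ many Type~I sums (whose inner sum is effectively over an arithmetic progression) and Type~II bilinear sums, and multiply out the resulting $k$-fold product; this reduces the left-hand side of Conjecture~\ref{hlc} to boundedly many sums in which the von Mangoldt weights have become divisor-like weights while the product $\lambda(n+h'_1)\cdots\lambda(n+h'_\ell)$ is left intact.

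Next I would dispose of the pieces family by family. For the Type~I pieces the problem becomes a bound for $\sum_{n\le x,\ n\equiv a\ (q)}\lambda(n+h'_1)\cdots\lambda(n+h'_\ell)$ that is nontrivial on average over moduli $q$ up to a power of $x$ --- a Chowla-type estimate along progressions, for which one would hope to deploy the structure theorem of Matom\"aki and Radziwi\l\l{}, the entropy-decrement method, and a large-sieve (Bombieri--Vinogradov-style) average in $q$. For the Type~II pieces, a Cauchy--Schwarz in the long variable followed by a shift turns the task into bounding correlations $\E_{n\le x}\lambda(n+g_1)\cdots\lambda(n+g_r)$ with $r\le 2\ell+O(1)$, i.e.\ again Chowla-type correlations, now with $r$ genuinely permitted to exceed $1$. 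Assembling the pieces, the singular series $\mathfrak{S}$ emerges from the $\ell=0$ part via the usual Ramanujan-sum and local-density computation (its convergence guaranteed by \eqref{betap-asym}), and every remaining term is either (i) a pure-$\Lambda$ minor-arc contribution or (ii) a Chowla-type correlation, possibly weighted by a divisor function and restricted to a congruence class.

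The main obstacle is not technical but structural: it is the \emph{parity problem}. The pure-$\Lambda$ minor-arc terms of type (i) in the case $k=2$ already encode the twin prime conjecture, and the Chowla-type correlations of type (ii) in the case $\ell\ge2$ are precisely Conjecture~\ref{chowla-conj} in its open range; sieve and circle-method inputs cannot distinguish integers with an even from an odd number of prime factors, and so cannot by themselves produce either the prime detection or the sign cancellation that these sub-problems require. Thus the reduction above, while it faithfully reorganises Conjecture~\ref{hlc}, does not close with present technology, and an external arithmetic input is needed to break parity. This is exactly the point at which the rest of the paper parts ways with the unconditional conjecture: by \emph{assuming} the existence of a Siegel zero one obtains an exceptional real character that imitates $\lambda$ over a long range and thereby supplies the missing parity-breaking correlation input --- at the cost of restricting to $0\le k\le2$ and of passing to a sparse sequence of scales $x$.
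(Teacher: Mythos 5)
The statement you were assigned is Conjecture \ref{hlc} itself, which the paper does not prove: as the authors note, only the cases $k+\ell\le 1$ are known even under GRH, and the actual result of the paper (Theorem \ref{main}) is a conditional surrogate, valid only under the hypothesis that a Siegel zero exists, only for $0\le k\le 2$, and only for $x$ in the range \eqref{x-range} tied to the conductor $q_\chi$. There is therefore no proof in the paper against which to measure yours, and your proposal --- to your credit --- correctly recognises that it is not a proof either. One small point in your favour: you read the main term correctly as ${\mathfrak S}$ when $\ell=0$ and $0$ when $\ell\ge 1$, even though the conjecture as printed transposes the two cases.

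The gap in your argument is the one you name, and it is total rather than partial. The Vaughan/Heath--Brown decomposition followed by Cauchy--Schwarz does not reduce Conjecture \ref{hlc} to anything easier: the correlations you are left with for $k=0$, $\ell=2$ are exactly Chowla's conjecture for two-point correlations (open), the $k=2$, $\ell=0$ minor-arc piece encodes the twin prime conjecture, and the Matom\"aki--Radziwi\l\l{} and entropy-decrement inputs you invoke yield only logarithmically averaged or qualitative conclusions that do not recover the unweighted asymptotic demanded here. So your proposal is an accurate map of why the conjecture is open, not a proof of it; the only rigorous content available is the Siegel-zero-conditional Theorem \ref{main}, whose proof (Sections \ref{sec:siegel-liouville}--\ref{sec:mainterm}) proceeds not by the circle method but by replacing $\lambda$ and $\Lambda$ with character-twisted Type I approximants built from the exceptional character $\chi$.
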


Clearly Conjectures~\ref{chowla-conj}, \ref{hl-conj} correspond to the special cases $k=0$ and $\ell=0$ respectively of Conjecture \ref{hlc}.
One could also generalize this conjecture by replacing the forms $n + h_j, n+h'_{j'}$ by more general linear forms $a_j n + b_j, a'_{j'} n + b'_{j'}$, no two of which are scalar multiples of each other, but we do not do so here in order to simplify the notation.

Only the $k+\ell \leq 1$ cases of Conjecture~\ref{hlc} are currently known, even if one assumes the generalized Riemann hypothesis, though see~\cite{sawin-shusterman} for some recent progress in the function field case, and the recent works~\cite{lichtman}, \cite{lichtman-teravainen} for some progress on an averaged version of this conjecture.  On the other hand, it turns out (perhaps surprisingly) that some progress on this conjecture can be made under an opposing hypothesis, namely the existence of a Siegel zero.  We use the notational conventions from Heath-Brown's work~\cite{heath-brown}:

\begin{definition}[Siegel zero]  A \emph{Siegel zero} $\beta$ is a real number associated to a primitive quadratic Dirichlet character $\chi$ of conductor $q_\chi$ such that $L(\beta,\chi)=0$ and
$$ \beta = 1 - \frac{1}{\eta \log q_\chi}$$
for some $\eta \geq 10$ (which we call the \emph{quality} of the zero).
\end{definition}

The lower bound on $\eta$ is mostly in order to ensure that $\log\log \eta$ is positive; the precise numerical value of the lower bound is not important.  From Siegel's theorem we have the (ineffective) upper bound
\begin{equation}\label{eta-bound}
\eta \ll_\eps q_\chi^\eps
\end{equation}
on the quality of a Siegel zero for any $\eps>0$.

There are prior results in the literature towards Conjecture~\ref{hlc} in the presence of a Siegel zero when only either the von Mangoldt function or the Liouville function appears in the correlation. These results are due to Heath-Brown~\cite{heath-brown} in the case of two-point correlations of the von Mangoldt function, and due to Chinis~\cite{chinis} in the case of the Chowla conjecture (with previous work by Germ\'an and Kat\'ai~\cite{german-katai} on the two-point case).  We can summarize them  as follows:

\begin{theorem}[Prior results on Hardy--Littlewood--Chowla given a Siegel zero]\label{prior}  Suppose that one has a Siegel zero $\beta$ with associated conductor $q_\chi$ and quality $\eta$.
\begin{itemize}
\item[(i)] \cite[Theorem 1]{heath-brown} For any distinct fixed natural numbers $h_1,h_2$, one has
$$ \E_{n \leq x} \Lambda(n+h_1) \Lambda(n+h_2) = {\mathfrak S} + O\left( \frac{1}{\log\log \eta} \right)$$
uniformly for all $q^{250}_\chi \leq x \leq q_\chi^{300}$, where ${\mathfrak S}$ is defined by~\eqref{ss-def}.
\item[(ii)]  \cite[Theorem 2]{german-katai} One has
$$ \E_{n \leq x} \lambda(n) \lambda(n+1) \ll \frac{1}{\log\log \eta} + \epsilon(x)$$
for $q^{10}_\chi \leq x \leq q_\chi^{(\log\log \eta)/3}$, where $\epsilon(x)$ is a quantity that goes to zero as $x \to \infty$ (uniformly in the choice of Siegel zero).
\item[(iii)] \cite[Theorem 1.2]{chinis}  For any distinct fixed natural numbers $h'_1,\dots,h'_\ell$, one has
$$ \E_{n \leq x} \lambda(n+h'_1) \cdots \lambda(n+h'_\ell) \ll \frac{1}{(\log\log \eta)^{1/2} \log^{1/12} \eta}$$
for $q^{10}_\chi \leq x \leq q_\chi^{(\log\log \eta)/3}$.
\end{itemize}
\end{theorem}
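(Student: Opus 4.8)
Since the three parts are, up to notational bookkeeping, exactly the cited results \cite[Theorem 1]{heath-brown}, \cite[Theorem 2]{german-katai} and \cite[Theorem 1.2]{chinis}, my plan is first to record the dictionary between their normalizations and ours --- the quality $\eta$, the conductor $q_\chi$, and the admissible ranges of $x$ --- so that the statements match verbatim; it then remains only to reproduce the arguments at the level of a sketch, which is worthwhile since the present paper must in any case extend them.

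The common engine I would describe is the following. A Siegel zero $\beta = 1 - \frac{1}{\eta\log q_\chi}$ of $L(s,\chi)$ forces the quadratic character $\chi$ to \emph{pretend} to be $-1$ on primes: shifting the contour in the explicit formula for $\sum_{n\le y}\Lambda(n)\chi(n)$, and using that no further zero of $\zeta$ or of $L(\cdot,\chi)$ can lie as close to $1$ (Siegel-zero repulsion), one gets
$$\sum_{n\le y}\Lambda(n)\chi(n) = -\frac{y^\beta}{\beta} + (\text{smaller terms}) = -y + (\text{error})$$
uniformly for $y$ in a range of shape $q_\chi^{O(1)}\le y\le q_\chi^{O(\log\log\eta)}$, with an error controlled by the proximity of $\beta$ to $1$ and the classical zero-free region. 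Compared with the prime number theorem this says that, in an $L^1$-averaged sense on that range, $\Lambda$ is essentially carried by the residues $n$ with $\chi(n)=-1$; equivalently, $\lambda$ and $\mu$ \emph{pretend} to be $\chi$ there.

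For (ii) and (iii) I would then replace each factor $\lambda(n+h'_j)$ by $\chi(n+h'_j)$ --- the cost being governed by the pretentious distance controlled above --- reducing the correlation to the complete character sum $\sum_{n\bmod q_\chi}\prod_j\chi(n+h'_j)$; since the $h'_j$ are distinct, $\prod_j\chi(\cdot+h'_j)$ is a nonprincipal character mod $q_\chi$, so this sum is $O(q_\chi^{1/2})$ by the Weil bound and hence negligible once $x\ge q_\chi^{10}$, and the explicit powers of $\log\eta$, $\log\log\eta$ in the error come out of optimizing the replacement step. For (i), where $\Lambda$ appears twice, I would instead use the Siegel zero to build a genuine divisor-sum approximant to $\Lambda$ out of $\chi$, turning $\E_{n\le x}\Lambda(n+h_1)\Lambda(n+h_2)$ into a shifted convolution of explicit multiplicative functions that elementary sieve methods evaluate, the singular series ${\mathfrak S}$ assembling from the local factors $\beta_p$; the window $q_\chi^{250}\le x\le q_\chi^{300}$ is precisely where both the approximation and the sieve estimate hold simultaneously.

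The step I expect to be the main obstacle --- and the one that pins the ranges of $x$ to $\log\log\eta$ rather than extending to $x\to\infty$ --- is making the ``pretending'' approximation simultaneously quantitative and stable under the multilinear correlation: one must rule out a secondary zero spoiling the explicit formula, and then feed the one-variable approximation to $\Lambda$ or $\lambda$ into a product of $k+\ell$ shifted copies without the errors compounding past $O(1/\log\log\eta)$ (or the analogous savings in (ii), (iii)).
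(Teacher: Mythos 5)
The paper does not actually prove Theorem~\ref{prior}; it simply records the results of \cite{heath-brown}, \cite{german-katai}, and \cite{chinis} with citations, so there is no in-paper proof to compare against. Your sketch is therefore best judged as an account of the cited references, and on that footing it is a reasonable high-level summary: Heath-Brown does construct a Selberg-sieve-weighted $\chi*\log$ approximant to $\Lambda$ and evaluates the resulting shifted convolution, Germ\'an--Kat\'ai and Chinis do replace $\lambda$ by a completely multiplicative proxy agreeing with $\chi$ at large primes, and the quantitative input in all three is that a Siegel zero forces $\sum_{n\le y}\Lambda(n)\chi(n)\approx -y$ in a log-window around $q_\chi$, traditionally extracted via the explicit formula and Deuring--Heilbronn repulsion.

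Two cautions worth flagging. First, your reduction of (ii)--(iii) to ``a nonprincipal character mod $q_\chi$'' is imprecise: $n\mapsto\prod_j\chi(n+h'_j)$ is not a Dirichlet character of $n$, but a character composed with a polynomial; what is really invoked is the Weil bound for $\sum_{n}\chi(f(n))$ with $f(n)=\prod_j(n+h'_j)$ squarefree (plus completion of sums), exactly as in \eqref{sumni} and Lemma~\ref{and}. Second, and more substantively, the present paper's own route to the analogous ``pretentiousness'' input is deliberately \emph{not} the complex-analytic Deuring--Heilbronn argument you describe: Proposition~\ref{prop_Psum} derives the smallness of $\sum_{p^*}1/p^*$ by an elementary comparison using the nonnegativity and multiplicativity of $1*\chi$, and this is precisely what lets Theorem~\ref{main} beat the $q_\chi^{O(\log\log\eta)}$ ceiling and the $1/\log\log\eta$ error in Theorem~\ref{prior}. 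So your sketch correctly explains where the $\log\log\eta$ losses in the prior results come from, but one should not read it as the mechanism underlying the paper's own Theorem~\ref{main}.
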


The main result of this paper is the following common generalization and strengthening of these results.

\begin{theorem}[New results on Hardy--Littlewood--Chowla given a Siegel zero]\label{main} Let $0 \leq k \leq 2$ and $\ell \geq 0$, and let $h_1,\dots,h_k$, $h'_1,\dots,h'_\ell$ be fixed distinct natural numbers.  Suppose that one has a Siegel zero $\beta$ with associated conductor $q_\chi$ and quality $\eta$. Let $0 < \eps_0 < 1$ be fixed, and let $x$ lie in the range
\begin{equation}\label{x-range}
q_\chi^{10k + \frac{1}{2} + \eps_0} \leq x \leq q_\chi^{\eta^{1/2}}.
\end{equation}
Then we have
\begin{equation}\label{nx}
 \E_{n \leq x} \Lambda(n+h_1) \cdots \Lambda(n+h_k) \lambda(n+h'_1) \cdots \lambda(n+h'_{\ell}) = {\mathfrak S} + O\left(\frac{1}{\log^{\frac{1}{10 \max(1,k)}} \eta}\right)
\end{equation}
where ${\mathfrak S}$ is as in Conjecture~\ref{hlc}.  
\end{theorem}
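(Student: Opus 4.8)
The plan is to turn the hypothetical Siegel zero into effective arithmetic information about both $\Lambda$ and $\lambda$, and then to reduce the correlation in \eqref{nx} to a character sum that Weil's bound can handle. The first step is to upgrade the bare hypothesis $L(\beta,\chi)=0$ into quantitative, effective estimates by means of the Deuring--Heilbronn phenomenon: a single zero $\beta$ of quality $\eta$ repels all other zeros of Dirichlet $L$-functions of conductor up to a large power of $q_\chi$ into the region $\Re s\le 1-c\log\eta/\log q_\chi$. This is exactly why the upper bound $x\le q_\chi^{\eta^{1/2}}$ appears in \eqref{x-range}: it keeps every conductor that arises inside this effective zero-free region, and also makes $x^{\beta-1}=1+O(\eta^{-1/2})$. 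Feeding this into the explicit formula yields an effective prime number theorem in arithmetic progressions, and more generally for the twisted sums $\sum_{n\le x}\Lambda(n)\chi'(n)$ with $\chi'$ of conductor a power of $q_\chi$: each equals its expected main term, plus a secondary term which is present only when $\chi'$ is induced from $\chi$ (or is trivial) and is then an explicit multiple of $x^\beta$, plus an error of size $x$ times a fixed negative power of $\log\eta$. Running the same contour argument for the completely multiplicative $\lambda$ — whose twist $\lambda\chi$ has Dirichlet series $\zeta(2s)/L(s,\chi)$ up to bounded Euler factors, hence a simple pole at $s=\beta$ — gives $\E_{m\le M}\lambda(m)\chi(m)=C_\beta+O((\log\eta)^{-c})$, together with its analogue in progressions of modulus up to $q_\chi^{O(1)}$, for an explicit constant $C_\beta\asymp 1$ depending only on the zero.

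With this model in hand, write $\prod_{j\le\ell}\lambda(n+h'_j)=\lambda(P(n))$ for the separable polynomial $P(X):=\prod_{j}(X+h'_j)$. Since $\lambda\chi$ is $1$-pretentious with mean $C_\beta$ and is equidistributed in progressions of small modulus by the previous step, one splits each $n+h'_j$ into its ``good'' part (built from primes $p\nmid q_\chi$ with $\chi(p)=-1$, on which $\lambda$ and $\chi$ agree) and its ``bad'' part (the rare primes with $\chi(p)=+1$, together with $p\mid q_\chi$, whose density in $[1,x]$ is $O(\eta^{-1/2})$ in our range); this lets me replace the Liouville factors by their $\chi$-models, reducing \eqref{nx} to the evaluation of $C_\beta^{\ell}\,\E_{n\le x}\Lambda(n+h_1)\cdots\Lambda(n+h_k)\,\chi(P(n))$ up to an admissible error. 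For $k\le 1$ this follows directly from the effective PNT in progressions from the first step; for $k=2$ a combinatorial decomposition of $\Lambda(n+h_1)\Lambda(n+h_2)$ (Vaughan's identity, or the Heath-Brown identity) into type-I and type-II bilinear sums reduces it, after inserting those estimates into the inner sums, to a combination of main terms exactly as in Heath-Brown's treatment of the twin-prime case. The lower bound $x\ge q_\chi^{10k+\frac12+\eps_0}$ in \eqref{x-range} is what makes these bilinear sums close (the moduli that occur have size up to $q_\chi^{O(k)}$) and also leaves room for the incomplete-sum losses in the character-sum estimate below; the case $k\ge 3$ is excluded because $\Lambda(n+h_1)\Lambda(n+h_2)\Lambda(n+h_3)$ admits no such decomposition even in the presence of a Siegel zero.

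When $\ell=0$ the surviving main terms reassemble, by the standard computation of local densities, into the singular series $\mathfrak S$. When $\ell\ge 1$, every surviving main term still carries an odd power of $\chi(n+h'_j)$ for some $j$: here the distinctness of \emph{all} the shifts $h_1,\dots,h_k,h'_1,\dots,h'_\ell$ is essential, since it prevents any factor $\chi(n+h'_j)$ from pairing with a $\chi$ produced by the Siegel-zero correction of some $\Lambda(n+h_i)$ into a non-oscillating perfect square. Thus one is left with $C_\beta^{\ell}$ times a completed-plus-incomplete character sum of $\chi\circ Q$ for a squarefree $Q$ (a divisor of $P$ times possibly some of the $X+h_i$) that is not a perfect square, hence $\chi\circ Q$ is a nontrivial ``character of $Q$''. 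Weil's bound for the complete sum modulo each prime factor of $q_\chi$, combined with multiplicativity of $\chi$ across those factors and a P\'olya--Vinogradov treatment of the incomplete tail, gives a saving of $q_\chi^{1/2-o(1)}$; since $q_\chi\gg_\eps\eta^{1/\eps}$ by \eqref{eta-bound}, this is smaller than any fixed power of $1/\log\eta$. Hence the $\ell\ge 1$ correlation is $O((\log\eta)^{-c})=\mathfrak S+O((\log\eta)^{-c})$ with $\mathfrak S=0$, and collecting all errors while optimizing the decomposition parameters produces the exponent $1/(10\max(1,k))$ in \eqref{nx}, the degradation in $k$ reflecting the loss incurred per von Mangoldt factor in the Vaughan decomposition.

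The main obstacle I anticipate is the first step in combination with the bilinear analysis: obtaining an effective prime number theorem in progressions that is simultaneously valid for moduli as large as a fixed power of $q_\chi$ (so the Vaughan bilinear sums close, hence the $10k$ in \eqref{x-range}), carries an error term that is a genuine negative power of $\log\eta$ rather than only $1/\log\log\eta$ (this is the quantitative improvement over Theorem~\ref{prior}(i)), and tracks the $\chi$-twisted secondary term precisely enough that it reconstitutes $\mathfrak S$ when $\ell=0$ and cancels against the oscillation of $\chi(P(n))$ when $\ell\ge 1$. A secondary difficulty is making the $\lambda$-to-$\chi$ substitution quantitative: bounding the contribution of integers $n+h'_j$ with an atypically large ``bad part'' so that the replacement error remains $O((\log\eta)^{-c})$ uniformly in $\ell$.
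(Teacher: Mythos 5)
There is a genuine gap, and it is concentrated in the $k=2$ part of your argument. You propose to evaluate $\E_{n\le x}\Lambda(n+h_1)\Lambda(n+h_2)\chi(P(n))$ by applying Vaughan's or Heath--Brown's identity to each $\Lambda$ factor and closing the resulting Type I and Type II bilinear forms. But this is precisely what cannot be done: for the shifted-convolution problem the bilinear forms arising from a combinatorial decomposition of $\Lambda$ cannot be closed past level of distribution $\tfrac12$, which is exactly the barrier in the twin prime problem. The crucial observation in the actual paper (and in Heath--Brown's original \cite{heath-brown}) is that the Siegel zero lets one replace $\Lambda$ not by bilinear forms but by a genuinely Type I object, namely $(\chi*\log)\nu$ with $\nu$ a Selberg sieve weight, which is essentially a twisted divisor function. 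One then uses the Weil bound for Kloosterman sums (Lemma \ref{fourier-expand} and Proposition \ref{two-thirds}, which establishes level of distribution $2/3$ for $(\chi*\log)^\flat$) to evaluate the resulting divisor-type correlation. You never mention Kloosterman sums, and without them your $k=2$ argument cannot close. Relatedly, your stated reason for excluding $k\ge 3$ (``admits no such decomposition'') is not the right one; $\Lambda$ of course decomposes for any number of factors. The obstruction is that correlations of $\tau_k=1*\cdots*1$ are only understood for $k\le 2$, because the needed level of distribution results for higher divisor functions are missing; the paper makes this explicit and only invokes $k\le 2$ in Step (iv).

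A second, secondary discrepancy concerns the deduction of prime-counting information from the Siegel zero. You propose the classical route via Deuring--Heilbronn repulsion, effective zero-free regions and the explicit formula. The paper deliberately avoids this: Proposition \ref{prop_Psum} obtains bounds on $\sum 1/p^*$ by a purely elementary non-negativity argument with the multiplicative function $1*\chi$, and the authors state explicitly that this is what buys the ``significantly superior dependence on $\eta$'' leading to the error $\log^{-1/(10\max(1,k))}\eta$ and the range $x\le q_\chi^{\eta^{1/2}}$. The complex-analytic approach you sketch was the route taken in \cite{heath-brown} and \cite{german-katai} and delivered only $1/\log\log\eta$ in a narrower $x$-range; your claim that the explicit-formula route by itself yields $(\log\eta)^{-c}$ is optimistic and would need to be substantiated. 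This is a difference of method that could perhaps be salvaged (at the cost of a worse bound), but combined with the missing Kloosterman input for $k=2$, the sketch as written does not prove Theorem \ref{main}.
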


\begin{remark}
The $k$-dependent exponent of $10k$ in the range~\eqref{x-range} can be improved somewhat, particularly when $k=1$, but we will not attempt to optimize it here. On the other hand, in order to improve the exponent $\frac{1}{2}$ in~\eqref{x-range} in the case $k=0$ it seems necessary to be able to obtain non-trivial bounds on short character sums such as
$$ \sum_{n \in I} \chi(n+h'_1) \cdots \chi(n+h'_\ell)$$
for intervals $I$ of length less than $q_\chi^{1/2}$, which is beyond the range of direct application of the Weil bounds and completion of sums (and for $\ell>1$ we were not able to adapt the Burgess argument~\cite{burgess} to such sums due to the lack of multiplicative structure).  The exponent $\frac{1}{10 \max(1,k)}$ in~\eqref{nx} can similarly be improved, but we will not attempt to do so here.
\end{remark}

Note that Theorem~\ref{main} improves the dependence on the quality $\eta$ of Siegel zero, and also allows for correlations that involve both the von Mangoldt function $\Lambda$ and the Liouville function $\lambda$, so long as the former function appears at most two times.  This latter restriction is an inherent limitation of our current state of knowledge of correlations for functions like the divisor function $\tau \coloneqq 1*1$; in particular, $k$-point correlations $\E_{n \leq x} \tau(n+h_1) \cdots \tau(n+h_k)$ are currently only well understood when $k \leq 2$.  

As a direct corollary to Theorem~\ref{main}, we can state the following strengthening of previous results. 

\begin{corollary}\label{cor}
Suppose that one has a Siegel zero $\beta$ with associated conductor $q_\chi$ and quality $\eta$. Let $0<\varepsilon_0<1$ be fixed. 
\begin{itemize}
\item[(i)]
For any distinct fixed natural numbers $h_1,h_2$, one has
$$ \E_{n \leq x} \Lambda(n+h_1) \Lambda(n+h_2) = {\mathfrak S} + O\left( \frac{1}{\log^{1/20} \eta} \right)$$
uniformly for all $q_{\chi}^{41/2+\varepsilon_0}\leq x \leq q_\chi^{\eta^{1/2}}$, where ${\mathfrak S}$ is defined by~\eqref{ss-def}.
\item[(ii)]  For any distinct fixed natural numbers $h'_1,\dots,h'_\ell$, one has
$$ \E_{n \leq x} \lambda(n+h'_1) \cdots \lambda(n+h'_\ell) \ll \frac{1}{ \log^{1/{10}} \eta}$$
uniformly for all $q^{1/2+\varepsilon_0}_\chi \leq x \leq q_\chi^{\eta^{1/2}}$.
\item[(iii)] For any fixed integer $h\neq 0$, one has
\begin{align*}
  \mathbb{E}_{|h|<p\leq x}\lambda(p+h)\ll  \frac{1}{ \log^{1/{10}} \eta}
\end{align*}
uniformly for all $q_{\chi}^{21/2+\varepsilon_0}\leq x \leq q_\chi^{\eta^{1/2}}$.
\end{itemize}
\end{corollary}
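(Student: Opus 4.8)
The plan is to obtain Corollary \ref{cor} as an immediate specialization of Theorem \ref{main}, so the only work is to match the parameters $(k,\ell)$ and track the numerical exponents.

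For part (i) I would invoke Theorem \ref{main} with $k = 2$ and $\ell = 0$, so that $h_1,h_2$ are the two given shifts and there are no Liouville factors. With these choices the admissible range \eqref{x-range} reads
$$q_\chi^{10\cdot 2 + \frac12 + \eps_0} \le x \le q_\chi^{\eta^{1/2}},$$
and since $10\cdot 2 + \frac12 = \frac{41}{2}$ this is exactly the range $q_\chi^{41/2+\eps_0}\le x\le q_\chi^{\eta^{1/2}}$ asserted in the corollary. The main term ${\mathfrak S}$ in \eqref{nx} is the singular series \eqref{ss-def} attached to $h_1,h_2$ (this being the case $\ell=0$), and the error term $O(1/\log^{1/(10\max(1,k))}\eta)$ becomes $O(1/\log^{1/20}\eta)$ because $\max(1,2)=2$. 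This is precisely the stated estimate for $\E_{n\le x}\Lambda(n+h_1)\Lambda(n+h_2)$.

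For part (ii) I would apply Theorem \ref{main} with $k = 0$ and $\ell \ge 1$ equal to the number of given shifts $h'_1,\dots,h'_\ell$ (the degenerate case $\ell=0$ being trivial). Then \eqref{x-range} becomes $q_\chi^{\frac12+\eps_0}\le x\le q_\chi^{\eta^{1/2}}$, the main term ${\mathfrak S}$ in \eqref{nx} vanishes since $\ell\ge 1$, and the error term is $O(1/\log^{1/(10\max(1,0))}\eta) = O(1/\log^{1/10}\eta)$ as $\max(1,0)=1$. This yields $\E_{n\le x}\lambda(n+h'_1)\cdots\lambda(n+h'_\ell)\ll 1/\log^{1/10}\eta$, as claimed.

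There is no genuine obstacle here: both parts are pure restatements of Theorem \ref{main} for the parameter choices $(k,\ell)=(2,0)$ and $(k,\ell)=(0,\ell)$ respectively, so the entire analytic content is already contained in that theorem. The only points to verify are bookkeeping: that $10k+\tfrac12$ equals $\tfrac{41}{2}$ when $k=2$ and $\tfrac12$ when $k=0$, that $\tfrac1{10\max(1,k)}$ equals $\tfrac1{20}$ and $\tfrac1{10}$ in the two cases, and that the singular series ${\mathfrak S}$ is zero precisely when at least one Liouville factor is present.
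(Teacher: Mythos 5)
Your proposal is correct and is exactly what the paper does: the corollary is stated there as a direct specialization of Theorem \ref{main}, with no further argument, and your bookkeeping of the exponents $10k+\tfrac12$ and $\tfrac{1}{10\max(1,k)}$ for $(k,\ell)=(2,0)$ and $(0,\ell)$ is accurate. You also correctly use the intended convention that $\mathfrak S$ equals the singular series when $\ell=0$ and vanishes when $\ell\geq 1$ (note the text of Conjecture \ref{hlc} states this condition the wrong way round, but the rest of the paper, e.g.\ the opening of Section \ref{sec:mainterm}, confirms your reading).
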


Corollary~\ref{cor}(ii) can further be applied to strengthen Chinis's result~\cite[Corollary 1.1]{chinis} on Sarnak's conjecture on M\"obius disjointness being true at infinitely many scales under the assumption of Siegel zeros. Applying Corollary~\ref{cor} and Sarnak's argument for the implication from Chowla's conjecture to Sarnak's conjecture (as in~\cite{chinis}), we see that, under the hypotheses of Corollary~\ref{cor}, for any fixed deterministic $f:\mathbb{N}\to \mathbb{C}$ we have
\begin{align*}
\sum_{n\leq x}\lambda(n)f(n)=o(x)   
\end{align*}
in the range $q_{\chi}^{1/2+\varepsilon_0}\leq x\leq q_{\chi}^{\eta^{1/2}}$. This improves on the range $q_{\chi}^{10}\leq x\leq q_{\chi}^{\log \log \eta/3}$ in~\cite{chinis}.

Corollary~\ref{cor}(iii) relates to the conjecture (considered in e.g.~\cite{pintz2}, \cite{murty-vatwani}, \cite{lichtman}, \cite{lichtman-teravainen}) that $\sum_{|h|<p\leq x}\lambda(p+h)=o(\pi(x))$, proving it for infinitely many $x$ under the existence of infinitely many Siegel zeros (of arbitrarily high quality).

We lastly note that, after the submission of this paper, Matom\"aki and Merikoski~\cite{matomaki-merikoski} proved a quantitatively stronger version of Corollary~\ref{cor}(i).

\subsection{Overview of proof}

The general strategy for proving results such as Theorem~\ref{main} is now well known: in the presence of a Siegel zero (and for $x$ comparable in log-scale to $q_\chi$), the function $\lambda$ ``pretends''\footnote{Following~\cite{pretentious}, we informally say that one arithmetic function $f$ ``pretends'' to be another $g$ if they are often close to each other when evaluated at rough numbers.} to be like the Dirichlet character $\chi$, and the von Mangoldt function $\Lambda = \mu * \log$ similarly ``pretends'' to be like $\chi * \log$, so the correlation in~\eqref{nx} is of comparable complexity to the average
$$ \E_{n \leq x} (\chi*\log)(n+h_1) \cdots (\chi*\log)(n+h_k) \chi(n+h'_1) \cdots \chi(n+h'_{\ell})$$
(in practice we also have to insert some sieve weights to account for the fact that not all numbers are rough).  This is a twisted and weighted version of the divisor correlation
$$ \E_{n \leq x} \tau(n+h_1) \cdots \tau(n+h_k)$$
which, as previously mentioned, is well understood for $k \leq 2$, basically because the Weil bounds for Kloosterman sums ensure that $\tau$ has level of distribution at least $2/3$, the key point being that this is larger than $1/2$.  The twist by $\chi$ introduces the need to estimate character sums such as
$$ \E_{n \leq x} \chi(n+h_1) \cdots \chi(n+h_k) \chi(n+h'_1) \cdots \chi(n+h'_\ell)$$
which can be adequately controlled by the Weil estimates for character sums since we are in the regime $x \gg q_\chi^{1/2}$.

To make this strategy rigorous, we will approximate the functions $\Lambda,\lambda$ by a series of  more tractable approximants that involve the exceptional character $\chi$ (as well as the scale $x$).  We will do this by executing the following steps in order:

\begin{itemize}
\item[(i)]  Replace the Liouville function $\lambda$ with an approximant $\lambda_{\Siegel}$, which is a completely multiplicative function that agrees with $\lambda$ at small primes and agrees with $\chi$ at large primes.  (This step was also performed in~\cite{german-katai}, \cite{chinis}.)
\item[(ii)]  Replace the von Mangoldt function $\Lambda$ with an approximant $\Lambda_\Siegel$, which is the Dirichlet convolution $\chi * \log$ multiplied by a Selberg sieve weight $\nu$ to essentially restrict that convolution to almost primes.  (This step essentially also appears in~\cite{heath-brown}.)
\item[(iii)]  Replace $\lambda_{\Siegel}$ with a more complicated truncation $\lambda_\Siegel^\sharp$ which has the structure of a ``Type I sum'', and which agrees with $\lambda_\Siegel$ on numbers that have a ``typical'' factorization.
\item[(iv)]  Replace the approximant $\Lambda_\Siegel$ with a more complicated approximant $\Lambda_\Siegel^\sharp$ which has the structure of a ``Type I sum''. (This step is inspired by a similar Type I approximation to the divisor function $\tau$ (and its higher order generalizations) 
recently introduced in~\cite{MSTT}, \cite{MRSTT}.)
\item[(v)]  Now that all terms in the correlation have been replaced with tractable Type I sums, use standard Euler product calculations and Fourier analysis, similar in spirit to the proof of the pseudorandomness of the Selberg sieve majorant for the primes in~\cite[Appendix D]{gt-linear}, to evaluate the correlation to high accuracy.
\end{itemize}

More succinctly, the proof of Theorem~\ref{main} proceeds by justifying all of the following approximations:
\begin{equation}\label{chain}
\begin{split}
&\E_{n \leq x} \Lambda(n+h_1) \cdots \Lambda(n+h_k) \lambda(n+h'_1) \cdots \lambda(n+h'_{\ell}) \\
&\quad \stackrel{(i)}{\approx} \E_{n \leq x} \Lambda(n+h_1) \cdots \Lambda(n+h_k) \lambda_\Siegel(n+h'_1) \cdots \lambda_\Siegel(n+h'_{\ell}) \\
&\quad \stackrel{(ii)}{\approx}\E_{n \leq x} \Lambda_\Siegel(n+h_1) \cdots \Lambda_\Siegel(n+h_k) \lambda_\Siegel(n+h'_1) \cdots \lambda_\Siegel(n+h'_{\ell}) \\
&\quad \stackrel{(iii)}{\approx}\E_{n \leq x} \Lambda_\Siegel(n+h_1) \cdots \Lambda_\Siegel(n+h_k) \lambda_\Siegel^\sharp(n+h'_1) \cdots \lambda_\Siegel^\sharp(n+h'_{\ell}) \\
&\quad \stackrel{(iv)}{\approx} \E_{n \leq x} \Lambda_\Siegel^\sharp(n+h_1) \cdots \Lambda_\Siegel^\sharp(n+h_k) \lambda_\Siegel^\sharp(n+h'_1) \cdots \lambda_\Siegel^\sharp(n+h'_{\ell}) \\
&\quad \stackrel{(v)}{\approx} {\mathfrak S}
\end{split}
\end{equation}
where the precise meaning of the symbol $\approx$ is given in~\eqref{approx-def} below.

The steps (i)-(v) are executed in Sections~\ref{sec:siegel-liouville}--\ref{sec:mainterm} respectively.  Interestingly, the hypothesis $k \leq 2$ is only used in step (iv) of this process.

Steps (i) and (ii) of the strategy  rely ultimately on the well known phenomenon that in the presence of a Siegel zero, one has $\chi(p) = -1$ for most primes $p$ that are comparable to the conductor $q_{\chi}$ in log-scale.  Traditionally, such phenomena are justified using complex-analytic methods, and in particular by exploiting the Deuring--Heilbronn phenomenon.  It turns out that an alternate approach relying almost entirely on elementary methods leads instead to significantly superior dependence on the quality $\eta$ of the zero; see Proposition~\ref{prop_Psum}. This eventually enables us to obtain a wider $x$ range in Theorem~\ref{main} than in previous results. 

Step (iii) involves splitting $\lambda_{\Siegel}$, which is a kind of character-twisted divisor sum, into two parts as $\lambda_\Siegel^\sharp+\lambda_{\Siegel}^{\flat}$, where $\lambda_\Siegel^\sharp$ accounts for the small divisors (with a smooth truncation) and $\lambda_{\Siegel}^{\flat}$ accounts for the large divisors. It turns out that $\lambda_{\Siegel}^{\flat}$ has a negligible contribution to the correlation (basically because smooth numbers become extremely rare at large scales). This is shown by first constructing a majorant for $\lambda_{\Siegel}^{\flat}$ (in Lemma~\ref{lamlam}) that after some Euler product computations is seen to be small ``on average'' in a suitable sense.\footnote{It would probably be possible to execute steps (ii) and (iii) in the opposite order, but that would offer no noteworthy simplifications, as we would still need to construct a majorant for  $\lambda_{\Siegel}^{\flat}$.}

Steps (iv) and (v) morally speaking amount to computing correlations such as
\begin{align}\label{taucorr}
\E_{n\leq x,n= a(q) }(\chi*\log(n))(\chi*\log(n+h))    
\end{align}
with power-saving error term (for $1\leq a\leq q\leq x^{\delta}$ for a small $\delta>0$), as well as correlations of the form
\begin{align}\label{nucorr}
\E_{n\leq x}f(n+h_1)\cdots f(n+h_k),    
\end{align}
where $f(n)=\sum_{d\mid n, d\leq x^{\delta}}b_d$ is a Type I sum with explicit coefficients $b_d$. However, both of these tasks are rather tedious as such; the first correlation~\eqref{taucorr} has secondary main terms of order $O(\frac{1}{\log x})$ times the main term (cf.,~\cite{est}), and we would need a fully explicit asymptotic in terms of $h,a,q$; meanwhile, evaluating the second correlation~\eqref{nucorr} with the Goldston--Y{\i}ld{\i}r{\i}m approach~\cite{goldston-yildirim} leads to some tricky contour integrals. We therefore smoothen $\Lambda_{\Siegel}$ by inserting a smooth partition of unity; the smoothness of the resulting functions makes handling error terms easier, just as in the smoothed approach to Goldston--Y{\i}ld{\i}r{\i}m type correlations in~\cite[Appendix D]{gt-linear}. We can also avoid explicitly obtaining asymptotics for sums such as~\eqref{taucorr} by using the Dirichlet hyperbola method, although the main ingredient for evaluating such correlations (namely Kloosterman sum bounds) is still needed. Our use of smooth weights does still necessitate some lengthy yet standard Fourier-analytic computations, but the arithmetic input is easier than in a direct approach involving an evaluation of~\eqref{taucorr}, \eqref{nucorr}.

\subsection{Acknowledgments}

TT was supported by a Simons Investigator
grant, the James and Carol Collins Chair, the Mathematical Analysis \& Application
Research Fund Endowment, and by NSF grant DMS-1764034. JT was supported by a
Titchmarsh Fellowship and Academy of Finland grant no. 340098. 

The authors thank Kaisa Matom\"aki and Jori Merikoski for pointing out a slight correction to the proof of Proposition~\ref{prop_Psum} in an earlier version of this paper. The authors would also like to thank the referee for helpful comments and suggestions.

\section{Notation}\label{notation-sec}

\subsection{Asymptotic notation}

For the rest of the paper, we let $k,\ell,h_1,\dots,h_k,h'_1,\dots,h'_\ell,\eps_0,\beta,\chi,q_{\chi},\eta,x$ be as in Theorem~\ref{main}, save that we will not require the hypothesis $k \leq 2$ except in Section~\ref{sec:siegel-vonmangoldt}, and that we do not impose the restriction~\eqref{x-range} on $x>1$ before Section~\ref{sec:siegel-liouville}.  We use the asymptotic notation $X \ll Y$, $Y \gg X$, or $X = O(Y)$ to denote the bound $|X| \leq CY$ where $C$ is a constant which is allowed to depend on the ``fixed'' quantities $k,\ell,h_1,\dots,h_k,h'_1,\dots,h'_{\ell},\eps_0$; we permit the constants to be ineffective.  Thus for instance the singular series ${\mathfrak S}$ in Conjecture~\ref{hlc} obeys the bound ${\mathfrak S} = O(1)$.  If we need the constant $C$ to depend on additional parameters, we will indicate this by subscripts, for instance $X \ll_A Y$ denotes the bound $|X| \leq C_A Y$ where $C_A$ depends on the parameter $A$ as well as the fixed quantities. We write $X \asymp Y$ for $X \ll Y \ll X$.  

By shrinking $\eps_0$ if necessary, we may assume that $\eps_0$ is sufficiently small depending on $k,\ell$.  We will also assume that $\eta$ is sufficiently large depending on the fixed quantities, since otherwise the claim follows from standard upper bound sieves (such as Lemma~\ref{sub}).  By~\eqref{eta-bound}, this also means that $q_{\chi}$ (and hence $x$) is also sufficiently large depending on the fixed quantities. 

\subsection{Indicator and exponential functions}

If $S$ is a sentence, we use $1_S$ to denote its indicator, thus $1_S=1$ when $S$ is true and $1_S=0$ otherwise.  If $E$ is a set, we use $1_E$ to denote the indicator function $1_E(n) \coloneqq 1_{n \in E}$.

In addition to the notation $e(\theta) \coloneqq e^{2\pi i \theta}$, we also write $e_q(a) \coloneqq e(a/q) = e^{2\pi i a/q}$ for natural numbers $q$ and $a \in \Z/q\Z$.    We also write $\|\theta\|_{\R/\Z}$ for the distance of $\theta$ to the nearest integer.



\subsection{Primes and prime factorization}

Unless otherwise specified, all sums and products will be over the natural numbers $\N = \{1,2,\dots\}$, with the exception of sums and products involving the variable $p$ (or $p'$, $p_1$, etc.), which will be over primes.  We define an \emph{exceptional prime} to be a prime $p^*$ such that $\chi(p^*) \neq -1$; sums over $p^*$ (or $p^*_1$, etc.) will always be understood to be over exceptional primes.

If $n$ is a natural number and $p$ is a prime, we let $n_{(p)}$ denote the largest power of $p$ dividing $n$, thus from the fundamental theorem of arithmetic
\begin{equation}\label{ftoa}
n = \prod_p n_{(p)}.
\end{equation}
For any threshold $z>1$, we may therefore factor a natural number $n$ as
\begin{equation}\label{nsplit}
n = n_{(\leq z)} n_{(>z)}
\end{equation}
where the $z$-smooth and $z$-rough components $n_{(\leq z)}, n_{(>z)}$ of $n$ are defined as
\begin{align*}
n_{(\leq z)} &\coloneqq \prod_{p \leq z} n_{(p)} \\
n_{(> z)} &\coloneqq \prod_{p > z} n_{(p)}.
\end{align*}
For a prime $p$, we let 
$$\N_{(p)} \coloneqq \{ n_{(p)}: n \in \N \} = \{ 1, p, p^2, \dots\}$$
denote the multiplicative semigroup generated by $p$, and similarly for a threshold $z>1$ we write
\begin{align*}
\N_{(\leq z)} &\coloneqq \{ n_{(\leq z)}: n \in \N \}\\
\N_{(> z)} &\coloneqq \{ n_{(> z)}: n \in \N \}
\end{align*}
for the multiplicative semigroups of $z$-smooth and $z$-rough numbers respectively.

If $d_1,\dots,d_m$ are natural numbers, we use $(d_1,\dots,d_m)$ and $[d_1,\dots,d_m]$ to denote their greatest common divisor and least common multiple, respectively.  We use $d\ (q)$ to denote the reduction of $d$ to $\Z/q\Z$, and $q|d$ to denote the assertion that $q$ divides $d$ (or equivalently $d = 0\ (q)$).  

A function $g: \N^m \to \C$ of $m$ natural numbers $d_1,\dots,d_m$ is \emph{multiplicative} if one has
$$ g(d_1 d'_1, \ldots, d_m d'_m) = g(d_1,\ldots,d_m) g(d'_1,\ldots,d'_m)$$
whenever $(d_1 \cdots d_m, d'_1 \cdots d'_m)=1$.  Observe the \emph{Euler product identity}
\begin{equation}\label{euler}
\sum_{d_1,\dots,d_m} g(d_1, \dots, d_m) = \prod_p E_p
\end{equation}
whenever the left-hand side is absolutely convergent, where the Euler factors $E_p$ are defined as
$$ E_p \coloneqq \sum_{d_1,\dots,d_m \in \N_{(p)}} g(d_1, \dots, d_m).$$
We observe the localized form
\begin{equation}\label{euler-local}
\sum_{d_1,\dots,d_m \in \N_{(\leq z)}} g(d_1, \dots, d_m) = \prod_{p \leq z} E_p
\end{equation}
of the Euler product identity for any threshold $z>0$; in particular, if $g$ is non-negative, then
\begin{equation}\label{euler-stop}
\sum_{d_1,\dots,d_m \leq z} g(d_1, \dots, d_m) \leq \prod_{p \leq z} E_p.
\end{equation}

We will frequently rely on Dirichlet convolution
$$ f*g(n) \coloneqq \sum_{d|n} f(d) g\left(\frac{n}{d}\right).$$
We let pointwise product take precedence over convolution, thus for instance
$$ f_1 f_2 * f_3 f_4 = (f_1 f_2) * (f_3 f_4).$$
From~\eqref{nsplit} we observe the identity
\begin{equation}\label{fmulti}
 f = f_{(\leq z)} * f_{(>z)}
\end{equation}
for any multiplicative function $f$ and any threshold $z>1$, where
\begin{align*}
f_{(\leq z)} &= f 1_{\N_{(\leq z)}} \\
f_{(>z)} &= f 1_{\N_{(>z)}} 
\end{align*}
are the restrictions of $f$ to $z$-smooth and $z$-rough numbers respectively.  Thus for instance $1_{(\leq z)} = 1_{\N_{(\leq z)}}$.  Observe that this splitting respects Dirichlet convolutions, in the sense that
\begin{equation}\label{respect}
(f*g)_{(\leq z)} = f_{(\leq z)} * g_{(\leq z)}; \quad (f*g)_{(> z)} = f_{(> z)} * g_{(> z)}
\end{equation}
for any $f,g \colon \N \to \C$.

\subsection{Scales}

We will make frequent use of the scales
\begin{equation}\label{R-def}
 R \coloneqq x^{1/\log^{\frac{1}{5\max(1,k)}} \eta}
\end{equation}
and
\begin{equation}\label{D-def}
 D \coloneqq x^{\frac{\eps_0}{10 (k+\ell)}}.
\end{equation}
We will also occasionally need the auxiliary scale
\begin{equation}\label{R0-def}
 R_0 \coloneqq x^{1/\sqrt{\log \eta}}. 
\end{equation}

The reader may wish to keep in mind the hierarchy of scales
$$ 1 < \log \eta \ll \log q_\chi \ll \log x < R_0 < R < D < x.$$
which follows easily from~\eqref{eta-bound}.  The conductor $q_\chi$ lies between $\log x$ and $x^2$ but can be either smaller or larger than $R_0$, $R$, or $D$.

We adopt the notation
$$ \log_z y \coloneqq \frac{\log y}{\log z}$$
for the logarithm of $y$ to base $z$ for any $y,z > 0$, and use the notation $X \approx Y$ as an abbreviation for
\begin{equation}\label{approx-def}
 X = Y + O\left( \frac{1}{\log^{\frac{1}{10\max(1,k)}} \eta} \right).
\end{equation}
Thus for instance the estimate~\eqref{nx} can be abbreviated to
$$ \E_{n \leq x} \Lambda(n+h_1) \cdots \Lambda(n+h_k) \lambda(n+h'_1) \cdots \lambda(n+h'_{\ell}) \approx {\mathfrak S}.$$
The scales $R_0, R, D$ have been chosen so that certain combinations of these scales with $x, \eta, q_\chi$ that will arise in our calculations are negligible with respect to the relation $\approx$.  More precisely, we observe for future reference that thanks to~\eqref{x-range}, \eqref{R-def}, \eqref{D-def}, \eqref{R0-def}, \eqref{eta-bound} we have relations
\begin{align}
\log_D x \asymp \log_x D &\asymp 1 \label{D-asymp}\\
\log_D R \asymp \log_x R = \log^{-\frac{1}{5\max(1,k)}} \eta &\approx 0 \label{lrn}\\
(\log_R^k x) \log_R R_0 = \log^{\frac{k}{5\max(1,k)}-\frac{1}{2}} \eta&\approx 0 \label{rxr0}\\
\frac{\log_R^k x}{\log^k \eta} = \log^{\frac{k}{5\max(1,k)}-k} \eta &\approx 0 \label{rxr1}\\
\frac{R^{2k} D^{2(k+\ell)} q_\chi^{4k+1/2} x^{\eps_0}}{x} \leq x^{\frac{2k}{\log^{1/(5\max(1,k))} \eta} + \frac{2 (k+\ell)\eps_0}{10(k+\ell)} + \frac{4k+1/2}{10k+\frac{1}{2}+\eps_0} + \eps_0 - 1} &\approx 0 \label{rdk0}\\
(\log_R^{O(1)} x) \exp(-\sqrt{\log \eta}/2) = (\log^{O(1)} \eta) \exp(-\sqrt{\log \eta}/2)  &\approx 0 \label{llog} \\
q_\chi^{-\frac{\eps_0}{4}} \log^{O(1)} x \leq x^{-\frac{\eps_0}{4(10k+\frac{1}{2}+\eps_0)}} \log^{O(1)} x &\approx 0\label{etao}
\end{align}
as well as the estimate
\begin{equation}\label{eta-decay}
\exp\left(- \frac{1}{8} \log_{R} D \right) = \exp\left( - \frac{\eps_0 \log^{1/5\max(1,k)} \eta}{80(k+\ell)} \right) \ll_A \log^{-A} \eta
\end{equation}
for all $A>0$.  Also, for $k=1,2$, we note for future reference that
\begin{equation}\label{dsmash}
\begin{split}
\frac{(\sqrt{x} R^2)^{\frac{3}{2}(k-1)} D^{2(k+\ell)} q_\chi^{9/2}}{x^{1-2\eps_0}}
&\ll x^{\frac{3}{4}(k-1) + \frac{9}{2} \frac{1}{10k} + 3\eps_0 - 1} \\
&\ll x^{-\eps_0} \\
&\approx 0
\end{split}
\end{equation}
since $\frac{3}{4}(k-1) + \frac{9}{2} \frac{1}{10k} \leq 1 - \frac{1}{40} < 1$ for $k=1,2$.  

\subsection{The Selberg sieve}

We fix a smooth function $\psi \colon \R \to \R$ supported on $[-1,1]$ that equals to $1$ on $[-1/2,1/2]$, and define the smooth cutoffs
\begin{equation}\label{psia}
\psi_{\leq z}(n) \coloneqq \psi(\log_z n)
\end{equation}
and
\begin{equation}\label{psibig-def}
\psi_{> z}(n) \coloneqq 1 - \psi(\log_z n)
\end{equation}
for any $z>1$.  We then define the \emph{Selberg sieve}\footnote{Here we use the Selberg sieve with smoothed coefficients, which was implicitly introduced by Goldston and Y{\i}ld{\i}r{\i}m; see for instance~\cite[Appendix D]{gt-linear} for further discussion.  Other sieve approximants to $1_{(>z)}$ could be used as a substitute for this sieve if desired; for instance the beta sieve was used in place of a Selberg sieve in the recent work \cite{matomaki-merikoski}, which appeared subsequently to the initial release of this paper.}
\begin{equation}\label{nuz-def}
\nu(n) \coloneqq \left(\sum_{d|n} \mu(d) \psi_{\leq R}(d)\right)^2.
\end{equation}
Note that $\nu$ is an upper bound sieve for $1_{(>R)}$, thus
\begin{equation}\label{nuz}
 1_{(>R)}(n) \leq \nu(n)
\end{equation}
for all natural numbers $n$.

\section{Tools}

In this section we collect some (mostly standard) estimates on various arithmetic functions which will be used in our main argument.

\subsection{Multiplicative number theory bounds}

We recall the crude \emph{divisor bound}
\begin{equation}\label{divisor-bound}
\tau(n) \ll_\eps n^\eps
\end{equation}
for any $n \geq 1$ and $\eps>0$; see e.g.,~\cite[(2.20)]{mv}.

From the Euler product formula
$$ \zeta(s) = \prod_p \left(1 - \frac{1}{p^s}\right)^{-1}$$
and the fact that $\zeta$ has a simple pole at $s=1$ with residue $1$ and no zeroes in $\{ s: |s-1| \leq \frac{1}{2}\}$, we see that
\begin{equation}\label{prod-p}
\prod_p \left(1 - \frac{1}{p^s}\right) = (1 + O(|s-1|)) (s-1)
\end{equation}
whenever $s$ is a complex number with $\Re s > 1$ and $|s-1| < \frac{1}{2}$.

From Mertens' theorem we easily verify that
\begin{equation}\label{mertens}
 \sum_{p \leq z} \frac{\min(\sigma \log_R p, 1)}{p} \ll \log(1 + \sigma \log_R z) 
\end{equation}
for any $\sigma > 0$ and $R,z \geq 1$, as can be seen by verifying the cases $\sigma \log_R z < 1$ and $\sigma \log_R z \geq 1$ separately; in exponential form we thus have
\begin{equation}\label{mertens-alt}
 \prod_{p \leq z}\left(1 + O\left(\frac{\min(\sigma \log_R p, 1)}{p}\right)\right) \leq (1 + \sigma \log_R z)^{O(1)}.
\end{equation} 
Mertens' theorem also gives (by dyadic decomposition) the bounds
\begin{equation}\label{mertens-2}
\sum_{p \geq y} \frac{1}{p^{1+1/\log z}} = \sum_{p \geq y} \frac{\exp(-\log_z p)}{p} \asymp \frac{\exp( - \log_z y )}{\log_z y}
\end{equation}
and
\begin{equation}\label{mertens-3}
\prod_{p \leq z} (1 - \frac{m}{p}) \asymp_m \log^{-m} z
\end{equation}
for any $y \geq z \geq 2$ and $m \geq 1$.  In particular 
\begin{equation}\label{mertens-4}
\prod_{p} \left(1 + O\left(\frac{1}{p^{1+1/\log z}}\right)\right) \ll \log^{O(1)} z.
\end{equation}

We recall an elementary inequality of Landreau~\cite{landreau} that allows one to upper bound the divisor function $\tau$ by a Type I sum:

\begin{lemma}[Landreau's inequality]\label{landreau-lemma}\ 
\begin{itemize}
\item[(i)]  If $n$ is a natural number and $y > z > 1$, then we can factor
\begin{equation}\label{nfact}
 n = n_{(>z)} n_1 \cdots n_m
\end{equation}
where $n_1,\dots,n_m \leq y$ lie in $\N_{(\leq z)}$ and $0 \leq m \leq 1 + \log_{y/z} n$.  Also, $n_{(>z)}$ is the product of at most $\log_z n$ primes.
\item[(ii)] If $\eps>0$, then
\begin{equation}\label{landreau-special-1}
\tau(n) \ll_\eps \sum_{d|n: d \leq n^\eps} \tau(d)^{O_\eps(1)}
\end{equation}
for all $n \geq 1$. In particular, by~\eqref{D-def}, one has
\begin{equation}\label{landreau-special}
\tau(n) \ll \sum_{d|n: d \leq D} \tau(d)^{O(1)}
\end{equation}
for $n \ll x$.
\end{itemize}
\end{lemma}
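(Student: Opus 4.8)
The plan is to prove (i) by a greedy extraction argument, and then deduce (ii) from (i) by bounding $\tau$ of each factor.

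For part (i), first split $n = n_{(\leq z)} n_{(>z)}$ as in \eqref{nsplit}. The rough part $n_{(>z)}$ is a product of prime powers $p^{a}$ with $p > z$; since each such $p^{a} \geq p > z$ and the whole rough part divides $n$, the number of prime factors (with multiplicity) is at most $\log_z n_{(>z)} \leq \log_z n$, giving the last assertion. For the smooth part $m \coloneqq n_{(\leq z)} \in \N_{(\leq z)}$, I would peel off factors greedily: repeatedly remove a maximal divisor that is $\leq y$ but $> y/z$ whenever $m > y$. Concretely, if the current smooth cofactor exceeds $y$, pick its prime factors one at a time (each of size $\leq z$) until the accumulated product first exceeds $y/z$; since the last prime added is $\leq z$, the accumulated product $n_i$ satisfies $y/z < n_i \leq y$. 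Each such step divides the remaining cofactor by more than $y/z$, so after $m$ steps the cofactor is at most $n_{(\leq z)}/(y/z)^{m}$; once this drops to $\leq y$ we stop, assigning the remainder (which is $\leq y$ and lies in $\N_{(\leq z)}$) as the final $n_i$. Counting: we need $(y/z)^{m} \leq n_{(\leq z)} \leq n$, i.e. $m \leq \log_{y/z} n$, so the total number of factors is $0 \leq m \leq 1 + \log_{y/z} n$, and \eqref{nfact} holds with all $n_1,\dots,n_m \leq y$ in $\N_{(\leq z)}$.

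For part (ii), I would apply (i) with a suitable choice of $z$ and $y$ depending on $\eps$ — say $z = n^{\eps/C}$ and $y = n^{\eps}$ for a large constant $C = C_\eps$ — so that $m = O_\eps(1)$ and also the rough part $n_{(>z)}$ has $O_\eps(1)$ prime factors, hence $n_{(>z)} = p_1 \cdots p_r$ is itself a product of $O_\eps(1)$ primes; grouping these primes appropriately lets us also write $n_{(>z)}$ as $O_\eps(1)$ factors each $\leq n^{\eps}$ (each prime individually divides $n$, and a single prime $p \mid n$ satisfies $p \leq n$; if some prime exceeds $n^{\eps}$ we simply keep it as its own factor and absorb the resulting loss, since there are only $O_\eps(1)$ of them and each contributes a bounded power of $\tau$). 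Thus $n = d_1 \cdots d_M$ with $M = O_\eps(1)$ and each $d_i \mid n$; by complete multiplicativity of $\tau$ on coprime parts and submultiplicativity in general, $\tau(n) \leq \prod_i \tau(d_i) \ll_\eps \max_i \tau(d_i)^{M} \leq \sum_{d \mid n : d \leq n^{\eps}} \tau(d)^{O_\eps(1)}$ after replacing each $d_i$ by the divisor $d$ among $d_1,\dots,d_M$ maximizing $\tau$ — more cleanly, since $\tau(n) = \prod_i \tau(d_i)$ when the $d_i$ are pairwise coprime, and in our factorization we may arrange near-coprimality at the cost of bounded overlaps, we get $\tau(n) \ll_\eps \sum_{d\mid n,\ d\le n^\eps}\tau(d)^{O_\eps(1)}$, which is \eqref{landreau-special-1}. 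Finally \eqref{landreau-special} follows immediately by taking $n \ll x$ and noting $n^{\eps_0/(20(k+\ell))} \ll x^{\eps_0/(10(k+\ell))} = D$ for a suitable choice of $\eps$, using \eqref{D-def}.

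The main obstacle is purely bookkeeping: in part (ii) one must be careful that the factors $d_i$ can be taken to divide $n$ with controlled multiplicity (so that $\prod \tau(d_i)$ genuinely bounds $\tau(n)$ up to an $O_\eps(1)$ factor), and that a prime factor of $n$ exceeding $n^{\eps}$ does not break the bound — this is handled by observing there are at most $1/\eps$ such large primes, each contributing a factor $\tau(p)=2$, which is absorbed into the implied constant. No deep input is needed beyond the divisor bound \eqref{divisor-bound} and elementary manipulation of prime factorizations.
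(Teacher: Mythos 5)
Your proof is correct and follows essentially the same route as the paper: a greedy peeling argument for (i), and then submultiplicativity of $\tau$ plus pigeonhole for (ii). One small remark: in (ii), the paper's treatment is cleaner because it does not attempt to include the rough part $n_{(>z)}$ in the Type I sum at all — it simply notes that, with the paper's choice $z=n^{\eps/2}$, part (i) forces $n_{(>z)}$ to be a product of $O_\eps(1)$ primes, so $\tau(n_{(>z)})=O_\eps(1)$ is absorbed into the implied constant, and the pigeonhole is taken only over the smooth factors $n_1,\dots,n_m$, all of which are genuinely $\leq n^\eps$. Your version instead keeps the large primes in the list $d_1,\dots,d_M$ and then has to deal with the possibility that the $\tau$-maximizer among the $d_i$ is a prime exceeding $n^\eps$; you do ultimately handle this (such a prime contributes only $\tau(p)=2$ so the product is then $O_\eps(1)$, which is bounded by the $d=1$ term in the sum), but the aside about ``arranging near-coprimality at the cost of bounded overlaps'' is unnecessary and somewhat distracting — you never need coprimality here, only the inequality $\tau(ab)\leq\tau(a)\tau(b)$, which already justifies $\tau(n)\leq\prod_i\tau(d_i)$ for any factorization $n=\prod_i d_i$. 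If you separate out $n_{(>z)}$ from the start as the paper does, the bookkeeping disappears entirely.
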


\begin{proof}  Observe from the greedy algorithm that any number in $\N_{(\leq z)}$ is either greater than $y$, or contains a factor between $y/z$ and $y$.  Iterating this fact, we can factor $n_{(\leq z)} = n_1 \cdots n_m$ where $n_1,\dots,n_m \leq y$ and all but at most one of the $n_1,\dots,n_m$ are greater than or equal to $y/z$.  This gives the bound $m \leq 1 + \log_{y/z} n$.  Since $n_{(>z)}$ is the product of primes greater than $z$, the total number of primes is at most $\log_z n$.  This gives (i).

For (ii), we apply (i) with $y = n^\eps$ and $z = n^{\eps/2}$ and use~\eqref{D-asymp} to obtain the factorization~\eqref{nfact} with $n_{(>n^{\eps/2})}$ the product of $O_\eps(1)$ primes, $m= O_\eps(1)$, and $n_1,\dots,n_m \leq n^\eps$.  Using the elementary inequality $\tau(ab) \leq \tau(a) \tau(b)$, we conclude that
$$ \tau(n) \ll \tau(n_1) \cdots \tau(n_m)$$
and hence by the pigeonhole principle
$$ \tau(n) \ll \tau(d)^m$$
for $d$ equal to one of the $n_1,\dots,n_m$.  The claim (ii) follows.
\end{proof}
 
We also record a standard sieve upper bound, which can easily be deduced from the fundamental lemma of sieve theory (or the large sieve):

\begin{lemma}[Sieve upper bound]\label{sub} Suppose that for every prime $p \leq x$ there is a natural number $0 \leq \omega(p) \ll 1$, and let $E$ be a subset of $\{n: n \leq x\}$ which avoids at least $\omega(p)$ residue classes modulo $p$ for each $p \leq x$.  Then we have
$$ \E_{n \leq x} 1_E(n) \ll \prod_{p \leq x} \left(1 - \frac{\omega(p)}{p}\right).$$
\end{lemma}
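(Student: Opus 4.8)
The statement to prove (Lemma~\ref{sub}) is a standard sieve upper bound: if $E \subseteq \{n \le x\}$ avoids at least $\omega(p)$ residue classes mod $p$ for every prime $p \le x$, with $0 \le \omega(p) \ll 1$, then $\E_{n \le x} 1_E(n) \ll \prod_{p \le x}(1 - \omega(p)/p)$. I would derive this directly from the fundamental lemma of sieve theory (or, as the excerpt suggests as an alternative, the large sieve). The sieve dimension here is $O(1)$ because $\omega(p) \ll 1$ uniformly, so the fundamental lemma applies cleanly with an absolute sieving level.

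\medskip

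\textbf{Step 1: Set up the sieve.} Let $P(z) \coloneqq \prod_{p \le z} p$ for a truncation parameter $z$ to be chosen (e.g.\ $z = x^{c}$ for a small absolute constant $c>0$, or even $z$ a small power of $x$). For each prime $p \le x$, let $\Omega_p \subseteq \Z/p\Z$ be a set of exactly $\min(\omega(p), p)$ residue classes that $E$ avoids (such a set exists by hypothesis, shrinking if $\omega(p) > p$, though for large $p$ this is moot since $\omega(p) \ll 1$). Then every $n \in E$ satisfies $n \bmod p \notin \Omega_p$, i.e.\ $n$ lies in the sifted set determined by the congruence conditions $\{\Omega_p\}_{p}$. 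Hence
\[
\sum_{n \le x} 1_E(n) \le \# \{ n \le x : n \bmod p \notin \Omega_p \text{ for all } p \le z \}.
\]

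\textbf{Step 2: Apply the fundamental lemma.} The sifting function $p \mapsto \#\Omega_p = \min(\omega(p),p)$ satisfies the dimension bound $\sum_{w \le p \le z} \frac{\#\Omega_p}{p} \log p \le \kappa \log(z/w) + O(1)$ for an absolute $\kappa = O(1)$ (this is just Mertens, using $\omega(p) \ll 1$). The fundamental lemma of sieve theory (e.g.\ in the Rosser--Iwaniec or Brun form) then gives
\[
\#\{ n \le x : n \bmod p \notin \Omega_p \ \forall p \le z\} \ll x \prod_{p \le z} \left(1 - \frac{\#\Omega_p}{p}\right) + R,
\]
where the remainder $R$ is controlled provided $z$ is a small enough power of $x$ (the remainder terms are sums of $|{\rm discrepancy}|$ over squarefree $d \le z^{O(1)} \le x^{1-\delta}$, each of size $O(\tau(d))$, hence $R \ll x^{1-\delta'}$ by \eqref{divisor-bound}). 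Dividing by $x$ and noting $\#\Omega_p \ge \min(\omega(p),p) \ge$ (a quantity comparable to $\omega(p)$, with the truncation $\min(\cdot,p)$ only helping since $1 - \omega(p)/p$ could be negative otherwise), we obtain $\E_{n\le x} 1_E(n) \ll \prod_{p \le z}(1 - \omega(p)/p)$.

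\medskip

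\textbf{Step 3: Extend the product to all $p \le x$.} It remains to replace $\prod_{p \le z}$ by $\prod_{p \le x}$. Since $\omega(p) \ll 1$, each factor $1 - \omega(p)/p$ with $z < p \le x$ equals $1 + O(1/p)$, so
\[
\prod_{z < p \le x} \left(1 - \frac{\omega(p)}{p}\right)^{-1} \ll \prod_{z < p \le x}\left(1 + O\left(\frac{1}{p}\right)\right) \ll \left(\frac{\log x}{\log z}\right)^{O(1)} \ll 1,
\]
using Mertens and the fact that $z$ is a fixed power of $x$ so $\log x / \log z = O(1)$. Multiplying through gives $\prod_{p \le z}(1 - \omega(p)/p) \ll \prod_{p \le x}(1 - \omega(p)/p)$, completing the proof. (One should be slightly careful if $\prod_{p \le x}(1-\omega(p)/p)$ contains a vanishing or negative factor from some small $p$ with $\omega(p) \ge p$; in that case $E$ is empty and the bound is trivial, so we may assume $\omega(p) < p$ for all $p \le z$, and the truncated sets $\Omega_p$ have the stated sizes.)

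\medskip

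\textbf{Main obstacle.} There is no real obstacle — this is a packaging exercise. The only point requiring a little care is the bookkeeping around truncation: ensuring $\omega(p)$ vs.\ $\min(\omega(p),p)$ is handled so the right-hand product is never vacuously negative, and choosing the sieve level $z$ as a power of $x$ small enough that the fundamental lemma's remainder term is negligible while $\log x/\log z$ remains $O(1)$. Both are routine. The cleanest write-up simply invokes the fundamental lemma of sieve theory (or the large sieve, as in \cite{mv}) as a black box and spends one line each on the dimension bound and on Step~3.
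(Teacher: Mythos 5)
Your proposal is correct and follows essentially the same route as the paper: both invoke the fundamental lemma of sieve theory with level a small fixed power of $x$, observe that the remainder terms are negligible, and then extend the Euler product from $p\le z$ to $p\le x$ via Mertens, with the degenerate case $\omega(p)\ge p$ handled by noting $E$ is then empty. The only cosmetic difference is that the paper writes out the upper-bound weights $\lambda^+_d$ and the pointwise majorization $1_{n=1}\le\sum_{d\mid n}\lambda^+_d$ explicitly, whereas you cite the fundamental lemma in its ready-made counting form; these are equivalent.
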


\begin{proof} We may assume that $\omega(p)<p$ for all $p$, since otherwise $E$ is empty and the claim is trivial (of course, this assumption is only non-trivial for the very small primes $p = O(1)$).  
By Mertens' theorem the contribution of the primes $x^{1/100} < p \leq x$ to the right-hand is $\asymp 1$, so we may replace the product $\prod_{p \leq x}$ here with $\prod_{p \leq x^{1/100}}$.

Let $g$ be the multiplicative function $g(d) \coloneqq \prod_{p|d: p \leq x^{1/100}} \frac{\omega(p)}{p}$.  
By the fundamental lemma of sieve theory (see~\cite[Lemma 6.3]{ik}) we can find weights $\lambda^+_d \in [-1,1]$ for all $d \leq \sqrt{x}$ such that
\begin{equation}\label{1n}
 1_{n=1} \leq \sum_{d|n} \lambda^+_d
\end{equation}
for all natural numbers $n$, and
\begin{equation}\label{sun}
 \sum_{d \leq x^{1/2}} \lambda^+_d g(d) \ll \prod_{p \leq x^{1/100}} \left( 1 - g(p)\right).
\end{equation}
For each $d \leq x^{1/2}$, let $E_d$ be the set formed by removing the $\omega(p)$ residue classes modulo $p$ from $\{n: n \leq x\}$ for all $p|d$. Then from~\eqref{1n} (with $n$ replaced by $\prod_{p \leq x^{1/100}: n \not \in E_p} p$) we have the pointwise bound
$$ 1_E \leq \sum_{d \leq x^{1/2}} \lambda^+_d 1_{E_d}$$
and thus
$$ \E_{n \leq x} 1_E(n) \leq \sum_{d \leq x^{1/2}} \lambda^+_d \E_{n \leq x} 1_{E_d}(n).$$
From the Chinese remainder theorem we have
$$ \E_{n \leq x} 1_{E_d}(n) = g(d) + O\left(\frac{1}{x}\right)$$
and thus by~\eqref{sun}
$$ \E_{n \leq x} 1_{E_d}(n) \ll \prod_{p \leq x^{1/100}}\left(1 - \frac{\omega(p)}{p}\right) + \frac{1}{x^{1/2}}.$$
By Mertens' theorem~\eqref{mertens-3}, the second term on the right-hand side is certainly dominated by the former, and the claim follows.
\end{proof}

We also record the following easy consequence of the Chinese remainder theorem.

\begin{lemma}[Chinese remainder theorem]\label{chin}  Let $d_1,\dots,d_{k'}$ be natural numbers for some $2 \leq k' \leq k+\ell$, and set $d \coloneqq [d_1,\dots,d_{k'}]$.  
\begin{itemize}
\item[(i)] If $(d_i, d_j)$ does not divide $h_i-h_j$ for some $1 \leq i < j \leq k'$, then $\prod_{j=1}^{k'} 1_{d_j|n+h_j}$ vanishes for all $n$.
\item[(ii)] If instead $(d_i, d_j)$ divides $h_i-h_j$ for all $1 \leq i < j \leq k'$, then there is a unique residue class $a\ (d)$ such that
$\prod_{j=1}^{k'} 1_{d_j|n+h_j} = 1_{n = a\ (d)}$.  Furthermore, $a = - h_j\ (d_j)$ for all $j=1,\dots,k'$, and $d \asymp d_1 \cdots d_{k'}$.
\end{itemize}
\end{lemma}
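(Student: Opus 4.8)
The plan is to deduce both parts from the classical Chinese remainder theorem for arbitrary (not necessarily coprime) moduli, supplemented by an elementary lower bound on least common multiples. Throughout, the indices $i,j$ range over the fixed distinct natural numbers $h_1,\dots,h_k,h'_1,\dots,h'_\ell$, so that $h_i-h_j$ is a fixed nonzero integer whenever $i\neq j$.

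Part (i) is immediate: if $n$ is such that $\prod_{j=1}^{k'} 1_{d_j|n+h_j}=1$, then $d_i\mid n+h_i$ and $d_j\mid n+h_j$, so $(d_i,d_j)$ divides $(n+h_i)-(n+h_j)=h_i-h_j$; taking the contrapositive gives the claim.

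For part (ii), the hypothesis that $(d_i,d_j)\mid h_i-h_j$ for all $1\le i<j\le k'$ is precisely the classical solvability criterion for the simultaneous congruences $n\equiv -h_j\ (d_j)$, $j=1,\dots,k'$. I would invoke this directly (or prove it by an easy induction on $k'$, reducing to the two-modulus identity $[d_1,d_2]=d_1d_2/(d_1,d_2)$: at the inductive step one merges $d_1,\dots,d_{k'-1}$ into a single congruence $n\equiv a'\ ([d_1,\dots,d_{k'-1}])$ and checks the compatibility condition $([d_1,\dots,d_{k'-1}],d_{k'})\mid a'+h_{k'}$, which follows from $d_i\mid a'+h_i$ and $(d_i,d_{k'})\mid h_i-h_{k'}$ for $i<k'$ together with the valuation identity $\max_i\min(v_p(d_i),v_p(d_{k'}))=\min(\max_i v_p(d_i),v_p(d_{k'}))$, so the last congruence can be adjoined). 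The solution set of the system is then a single residue class $a\ (d)$ with $d=[d_1,\dots,d_{k'}]$, and since $\prod_{j=1}^{k'}1_{d_j|n+h_j}$ is by definition the indicator function of this system, it equals $1_{n=a\ (d)}$; the relations $a\equiv -h_j\ (d_j)$ hold by construction.

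It remains to establish $d\asymp d_1\cdots d_{k'}$. The bound $d=[d_1,\dots,d_{k'}]\le d_1\cdots d_{k'}$ is trivial. For the reverse direction I would prove, by induction on $k'$ starting from $[d_1,d_2]=d_1d_2/(d_1,d_2)$, the inequality
\[ [d_1,\dots,d_{k'}]\ \ge\ \frac{d_1\cdots d_{k'}}{\prod_{1\le i<j\le k'}(d_i,d_j)}, \]
the inductive step combining $[d_1,\dots,d_{k'}]=[d_1,\dots,d_{k'-1}]\,d_{k'}/([d_1,\dots,d_{k'-1}],d_{k'})$ with the divisibility $([d_1,\dots,d_{k'-1}],d_{k'})\mid\prod_{i<k'}(d_i,d_{k'})$, which is a one-line check on $p$-adic valuations (namely $\min(\max(\alpha,\beta),\gamma)\le\min(\alpha,\gamma)+\min(\beta,\gamma)$). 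Finally, under the hypothesis of (ii) each $(d_i,d_j)$ divides the fixed nonzero integer $h_i-h_j$, so $(d_i,d_j)=O(1)$ and hence $\prod_{1\le i<j\le k'}(d_i,d_j)=O(1)$ (there being $O(1)$ pairs); combined with the displayed inequality this yields $d_1\cdots d_{k'}\ll d$. The only step requiring any real thought is this last lcm lower bound; everything else is a routine appeal to the Chinese remainder theorem, so I do not anticipate serious obstacles.
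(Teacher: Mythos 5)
Your proof is correct and takes essentially the same route as the paper: both establish the generalized Chinese remainder solvability criterion via $p$-adic valuation checks, and both derive $d \asymp d_1\cdots d_{k'}$ from the fact that $d_1\cdots d_{k'}/[d_1,\dots,d_{k'}]$ is controlled by $\prod_{1\le i<j\le k'}(d_i,d_j)$, each factor of which divides the fixed nonzero integer $h_i-h_j$ and is hence $O(1)$. The only organizational difference is that the paper reduces at once to prime-power moduli (where $d$ equals the largest $d_i$ and one sets $a=-h_i$ directly), whereas you merge the congruences one at a time by induction on $k'$; both are standard arguments for the same fact and neither offers a substantive advantage here.
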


\begin{proof} All the claims are immediate except for the existence of the residue class $a$ in part (ii) (the final part of (ii) following from the general relation $\frac{d_1 \cdots d_{k'}}{[d_1,\dots,d_{k'}]} | \prod_{1 \leq i < j \leq k'} (d_i,d_j)$).  By the Chinese remainder theorem we may assume that the $d_i$ are all powers of a single prime $p$.  Then we have $d = d_i$ for some $1 \leq i \leq k'$, and the claim follows by setting $a \coloneqq -h_i$.
\end{proof}

\subsection{Some Fourier analysis}

Recall the \emph{Fourier inversion formula}: if $g \colon \R \to \C$ is a Schwartz function, then one has
\begin{equation}\label{fourier-inverse}
 g(u) = \int_\R e^{-itu} f(t)\ dt
\end{equation}
for all $u \in \R$, where the Fourier transform $f \colon \R \to \C$ of $g$ is another Schwartz function defined by the formula
$$ f(t) \coloneqq \frac{1}{2\pi} \int_\R e^{itu} g(u)\ du.$$
As a special case of this, if $\varphi \colon \R \to \C$ is a function such that $u \mapsto e^u \varphi(u)$ is Schwartz, then
$$ e^u \varphi(u) = \int_\R e^{-itu} f(t)\ dt$$
for all $u \in \R$, where
$$ f(t) \coloneqq \frac{1}{2\pi} \int_\R e^{(1+it)u} \varphi(u)\ du.$$
In particular, for any real $n,z > 0$ we have
\begin{equation}\label{fourier}
 \varphi( \log_z n ) = \int_\R \frac{1}{n^{\frac{1+it}{\log z}}} f(t)\ dt.
\end{equation}
Evaluating this formula at $n=1$ we conclude that
\begin{equation}\label{fourier-0}
\varphi(0) = \int_\R f(t)\ dt
\end{equation}
and if one differentiates at $n=1$ instead one obtains the variant identity
\begin{equation}\label{fourier-1}
-\varphi'(0) = \int_\R (1+it) f(t)\ dt.
\end{equation}

As an application of these Fourier representations, we give an analogue of Lemma~\ref{sub} for the Selberg sieve $\nu$ (cf.,~\cite[Lemma 3]{pintz},~\cite[Proposition 14]{polymath}):

\begin{lemma}[Selberg sieve concentrates on almost primes]\label{nusieve}  Let $0 \leq \ell' \leq \ell$, and let $1 \leq d_1,\dots,d_k,d'_1,\dots,d'_{\ell'} \leq x$ be integers. Then
\begin{equation}\label{enix}
\begin{split}
&\E_{n \leq x} \prod_{j=1}^k \nu(n+h_j) 1_{d_j|n+h_j} \prod_{j'=1}^{\ell'} 1_{d'_{j'}|n+h'_{j'}} \\
&\quad \ll_{A} \frac{\tau(d_1 \cdots d_k d'_1 \cdots d'_{\ell'})^{O(1)}}{d_1 \cdots d_k d'_1 \cdots d'_{\ell'} \log^k R} \int_1^\infty \left(\prod_{ p|d_1 \cdots d_k} \min(\sigma \log_R p,1)\right) \frac{d\sigma}{\sigma^A} + \frac{R^{2k}}{x}
\end{split}
\end{equation}
for any $A>0$.
\end{lemma}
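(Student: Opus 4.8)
The plan is to expand the square in the Selberg sieve $\nu(n+h_j) = \left(\sum_{e_j \mid n+h_j} \mu(e_j)\psi_{\le R}(e_j)\right)^2$, so that the left-hand side becomes a sum over divisors $e_j, f_j$ (with $e_j,f_j \le R$) of
$$\mu(e_j)\mu(f_j)\psi_{\le R}(e_j)\psi_{\le R}(f_j),$$
together with the constraints $[e_j,f_j,d_j] \mid n+h_j$ and $d'_{j'} \mid n+h'_{j'}$. First I would apply the Chinese remainder theorem (Lemma \ref{chin}): the joint congruence conditions either are inconsistent (contributing nothing) or pin $n$ down to a single residue class modulo $D_0 \coloneqq [[e_1,f_1,d_1],\dots,[e_k,f_k,d_k],d'_1,\dots,d'_{\ell'}]$, so that $\E_{n\le x} 1_{n\equiv a\,(D_0)} = \frac{1}{D_0} + O(1/x)$. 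The total error from the $O(1/x)$ terms is bounded by $\frac{1}{x}\sum_{e_j,f_j\le R} 1 \ll R^{2k}/x$, giving the second term in \eqref{enix}. The main term is then
$$\sum_{e_1,f_1,\dots,e_k,f_k \le R} \left(\prod_{j=1}^k \mu(e_j)\mu(f_j)\psi_{\le R}(e_j)\psi_{\le R}(f_j)\right) \frac{1_{\text{CRT compatible}}}{[[e_1,f_1,d_1],\dots,d'_1,\dots,d'_{\ell'}]}.$$

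Next I would use the standard Fourier representation \eqref{fourier} to write each $\psi_{\le R}(e_j) = \int_\R e_j^{-(1+it_j)/\log R} f(t_j)\,dt_j$ and similarly for $f_j$, so that the sum over $e_j,f_j$ factors (after pulling out the $d_j,d'_{j'}$ via an Euler product computation) into a product of local factors. Since $\psi$ is smooth and compactly supported, its Fourier transform $f$ is Schwartz, so $|f(t)| \ll_A (1+|t|)^{-A}$ for any $A$, which provides the decay needed to truncate the $t_j$-integrals and to justify absolute convergence. The key point is that the resulting Euler product over primes $p$ has local factors of the shape $1 + O(\min(\text{something}, 1)/p)$; more precisely, for primes $p \nmid d_1\cdots d_k d'_1 \cdots d'_{\ell'}$ the $\mu(e_j)\mu(f_j)$ and the inclusion-exclusion in $[e_j,f_j]$ produce a factor $1 - O(1/p^{1+\Re})$ type contribution (as in the classical $\log^{-k}R$ computation), while for $p \mid d_j$ one picks up $\min(\sigma_j \log_R p, 1)$ after combining the $t_j$-variables into a single real parameter $\sigma$. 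Collecting these via Mertens-type estimates \eqref{mertens}, \eqref{mertens-alt}, \eqref{prod-p} yields the factor $\frac{1}{\log^k R}\int_1^\infty (\prod_{p\mid d_1\cdots d_k}\min(\sigma\log_R p,1))\frac{d\sigma}{\sigma^A}$, the $\frac{1}{d_1\cdots d_k d'_1\cdots d'_{\ell'}}$ coming from the main diagonal term of the Euler product, and the $\tau(d_1\cdots d_k d'_1\cdots d'_{\ell'})^{O(1)}$ absorbing the deviation of the $D_0^{-1}$ from $\prod [e_j,f_j]^{-1}(d_1\cdots d'_{\ell'})^{-1}$ on the primes dividing the $d$'s (using the divisor bound \eqref{divisor-bound} to control how many such primes there are and Lemma \ref{chin}(ii) for $d \asymp d_1\cdots d_{k'}$).

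I expect the main obstacle to be the bookkeeping in the Euler product computation: one must carefully separate the primes dividing $d_1\cdots d_k d'_1\cdots d'_{\ell'}$ from the rest, handle the possible common factors among the $d_j, e_j, f_j$ and the compatibility constraints from Lemma \ref{chin}(i) (which can only help, by restricting the sum), and correctly reduce the $2k$ Fourier variables $t_j$ (one per $e_j$) plus another $2k$ for the $f_j$ down to a single scaling parameter $\sigma$ in the final bound — this is where the precise exponent structure $\sum \Re(1+it_j)/\log R$ must be tracked and where one invokes the Schwartz decay to collapse the remaining integrals into $\int_1^\infty \sigma^{-A}\,d\sigma$. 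None of the individual steps is deep; the care is entirely in ensuring that the error terms and the extra divisor factors are genuinely of the claimed shape, and that the passage from $[[e_1,f_1,d_1],\dots]$ back to a clean product is absorbed into $\tau(d_1\cdots d'_{\ell'})^{O(1)}$. This is modeled closely on the pseudorandomness verification for the Selberg majorant in \cite[Appendix D]{gt-linear} and the analogous lemmas in \cite{pintz}, \cite{polymath}, so I would follow those computations, inserting the extra congruence conditions $d_j \mid n+h_j$, $d'_{j'}\mid n+h'_{j'}$ and tracking their effect on the local factors.
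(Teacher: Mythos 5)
Your proposal is correct and follows essentially the same route as the paper: expand the Selberg sieve coefficients, reduce via the CRT (Lemma \ref{chin}) to a single residue class with an $O(1/x)$ error giving the $R^{2k}/x$ term, Fourier-expand each $\psi_{\le R}$ via \eqref{fourier} with Schwartz decay, factor the resulting sum as an Euler product, and analyze the local factors case by case (Möbius cancellation at $t=0$ giving the $\min(\sigma\log_R p,1)$ factor for $p\mid d_1\cdots d_k$, Mertens for the $\log^{-k}R$). The only detail the paper makes explicit that you leave implicit is the mechanism behind the $\min(\sigma\log_R p,1)$ gain — namely, that for $p\mid d_1\cdots d_k$ the local factor $F_p$ vanishes at the origin, and the Cauchy integral formula on a polydisc of radius $\asymp 1/\log p$ converts that vanishing into the linear-in-$\sigma\log_R p$ smallness; your phrase about combining the $t_j$'s into a single $\sigma$ is pointing at this but would need that Cauchy-estimate step to close the argument rigorously.
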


The $\frac{R^{2k}}{x}$ error term is negligible in practice.  The $\sigma$ variable of integration is technical and as a first approximation the reader is invited to replace $\sigma$ with $1$ (and delete the integral).  The key feature of this estimate are the factors of $\min(\sigma \log_R p,1)$, which make the left-hand side of~\eqref{enix} small when $d_1,\dots,d_k$ have one or more small prime factors.  With further effort one could obtain a more precise asymptotic for the left-hand side of~\eqref{enix} (in the spirit of~\cite[Theorem D.3]{gt-linear}) but we will not need to do so here.

\begin{proof}  It is convenient to relabel by writing $k' \coloneqq k + \ell'$ and $h_{k+j} \coloneqq h'_j$, $d_{k+j} \coloneqq d'_j$ for $j=1,\dots,\ell'$.  By Lemma~\ref{chin} we may assume that $(d_i,d_j)|h_i-h_j$ for all $1 \leq i < j \leq k'$.  In particular, if we set $d \coloneqq [d_1,\dots,d_k]$ and $d' \coloneqq [d_1,\dots,d_{k'}]$, then $d \asymp d_1 \cdots d_k$ and $d' \asymp d_1 \cdots d_{k'}$.

By~\eqref{nuz-def}, the left-hand side of~\eqref{enix} may be expanded as
$$ \sum_{d'_1,d''_1,\dots,d'_k,d''_k} \E_{n \leq x} \prod_{j=1}^k \mu\psi_{\leq R}(d'_j) \mu\psi_{\leq R}(d''_j) \prod_{j=1}^{k'} 1_{d^*_j|n+h_j}$$
where $d_j^* \coloneqq [d_j,d'_j,d''_j]$ for $j=1,\dots,k$ and $d_j^* \coloneqq d_j$ for $j=k+1,\dots,k'$.  From Lemma~\ref{chin} we see that the average $\E_{n \leq x} \prod_{j=1}^{k'} 1_{d^*_j|n+h_j}$ vanishes unless $(d^*_i,d^*_j)|h_i-h_j$ for all $1 \leq i < j \leq k'$, in which case it is equal to $\frac{1}{[d^*_1,\dots,d^*_{k'}]} + O(\frac{1}{x})$.  The contribution of the error $O(\frac{1}{x})$ is of size $O(\frac{R^{2k}}{x})$, so it suffices to show that
\begin{align*} &\sum_{d'_1,d''_1,\dots,d'_k,d''_k} \frac{\prod_{j=1}^k \mu\psi_{\leq R}(d'_j) \mu\psi_{\leq R}(d'_j) \prod_{1 \leq i < j \leq k'} 1_{(d^*_i, d^*_j)|h_i-h_j}}{[d^*_1,\dots,d^*_{k'}]}\\
&\ll_A \int_1^\infty \frac{\tau(d'')^{O(1)}}{d' \log^k R} \prod_{p|d} \min(\sigma \log_R p,1)\ \frac{d\sigma}{\sigma^A}.
\end{align*}
We can expand the left-hand side using~\eqref{fourier} and Fubini's theorem as
$$\int_{\R^{2k}}
 \sum_{d'_1,d''_1,\dots,d'_k,d''_k} \frac{\prod_{j=1}^k \mu(d'_j) \mu(d''_j) \prod_{1 \leq i < j \leq k'} 1_{(d^*_i, d^*_j)|h_i-h_j}}{[d^*_1,\dots,d^*_{k'}] \prod_{j=1}^k (d'_j)^{\frac{1+it'_j}{\log R}}(d''_j)^{\frac{1+it''_j}{\log R}}}\ \prod_{j=1}^k f(t'_j) f(t''_j)\, dt'_j dt''_j$$
for some Schwartz function $f$.
Changing variables using the substitution $\sigma \coloneqq 1 + \sum_{j=1}^k |t'_j| + |t''_j|$, and using the rapid decay of $f$ and the triangle inequality, it will suffice to establish the pointwise bound
$$
 \sum_{d'_1,d''_1,\dots,d'_k,d''_k} \frac{\prod_{j=1}^k \mu(d'_j) \mu(d''_j)  \prod_{1 \leq i < j \leq k'} 1_{(d^*_i, d^*_j)|h_i-h_j}}{[d^*_1,\dots,d^*_{k'}]\prod_{j=1}^k (d'_j)^{\frac{1+it'_j}{\log R}}(d''_j)^{\frac{1+it''_j}{\log R}} } \ll
\sigma^{O(1)} \frac{\tau(d')^{O(1)}}{d' \log^k R} \prod_{p|d} \min(\sigma \log_R p,1)$$
for all $t'_1,\dots,t''_k \in \R$.

By~\eqref{euler} and the fact that $\mu$ is supported on square-free numbers, the left-hand side factors as an Euler product $\prod_p E_p$ where
$$ E_p \coloneqq F_p\left(\frac{1+it'_1}{\log R},\frac{1+it''_1}{\log R},\dots,\frac{1+it'_k}{\log R},\frac{1+it''_k}{\log R}\right)$$
and
$$ F_p(z'_1,z''_1,\dots,z'_k,z''_k) \coloneqq
 \sum_{d'_1,d''_1,\dots,d'_k,d''_k \in \{1,p\}} \frac{\prod_{j=1}^k \mu(d'_j) \mu(d''_j)  \prod_{1 \leq i < j \leq k'} 1_{((d^*_i)_{(p)}, (d^*_j)_{(p)})|h_i-h_j}}{[d^*_1,\dots,d^*_{k'}]_{(p)} \prod_{j=1}^k (d'_j)^{z'_j} (d''_j)^{z''_j}}.$$
From the triangle inequality we have  
\begin{equation}\label{ept-alt}
 E_p = 1 + O\left( \frac{1}{p^{1+\frac{1}{\log R}}}\right)
\end{equation}
when $d'_{(p)}=1$ and
\begin{equation}\label{epd}
 E_p \ll \frac{1}{d'_{(p)}} 
\end{equation}
when $d'_{(p)}>1$, hence by~\eqref{mertens-2} we have
$$ \prod_{p>R} E_p d'_{(p)} \ll \tau(d')^{O(1)}.$$
Now let $p \leq R$ and $d'_{(p)} > 1$.  Then from the triangle inequality we have
$$ F_p(z'_1,z''_1,\dots,z'_k,z''_k) \ll \frac{1}{d'_{(p)}}$$
whenever $z'_1,z''_1,\dots,z'_k,z''_k$ are complex numbers of size $O(\frac{1}{\log p})$, while from the cancellation in the M\"obius coefficients $\mu(d'_j), \mu(d''_j)$ and the hypothesis $(d_i,d_j)|h_i-h_j$ for all $1 \leq i < j \leq k'$ we see that
$$ F_p(0,\dots,0) = 0.$$
From the Cauchy integral formula (in the case $\sigma \log_R p \leq 1$) or~\eqref{epd} (otherwise) we have
$$ E_p d'_{(p)} \ll \min( \sigma \log_R p, 1).$$
Finally, suppose that $p \leq R$ and $d'_{(p)}=1$.  Then from the triangle inequality we have
$$ F_p(z'_1,z''_1,\dots,z'_k,z''_k) = 1 + O\left(\frac{1}{p}\right)$$
whenever $z'_1,z''_1,\dots,z'_k,z''_k$ are complex numbers of size $O(\frac{1}{\log p})$, while from noting that the conditions
$((d^*_i)_{(p)}, (d^*_j)_{(p)})|h_i-h_j$ permit $d'_j, d''_j$ to equal $p$ for $h_j$ in at most one residue class $a\ (p)$, we have
\begin{align*}
 F_p(0,\dots,0) &= 1 + \sum_{a \in \Z/p\Z} \frac{\sum_{d'_j, d''_j \in \{1,p\} \hbox{ when } h_j = a\ (p)} \prod_{h_j=a\ (p)} \mu(d'_j) \mu(d''_j) - 1}{p} \\
&= 1 + \sum_{a \in \Z/p\Z} \frac{ (1 - 1)^{2 \# \{ j=1,\dots,k: h_j = a\ (p)\}} - 1}{p} \\
&= 1 - \frac{\# \{ h_j\ (p): j=1,\dots,k\}}{p} \\
&= \beta_p \left(1-\frac{1}{p}\right)^k
\end{align*}
thanks to~\eqref{betap-def}.  From the Cauchy integral formula (in the case $\sigma \log p \leq \log R$) or~\eqref{ept-alt} (otherwise) we thus have
$$ E_p = E_p d'_{(p)} = \beta_p \left(1-\frac{1}{p}\right)^k + O\left( \frac{\min( \sigma \log_R p, 1)}{p} \right),$$
and hence by~\eqref{betap-asym}
$$ E_p d'_{(p)} \ll \left(1-\frac{k}{p}\right) \left( 1 + O\left( \frac{\min( \sigma \log_R p, 1)}{p} \right) + O\left(\frac{1}{p^2}\right) \right).$$
From Mertens' theorem~\eqref{mertens-alt} we have
$$ \prod_{p \leq R} \left(1 + O\left(\frac{\min( \sigma \log_R p, 1)}{p}\right) + O\left(\frac{1}{p^2}\right) \right) \ll \sigma^{O(1)}.$$
Putting all this together, we see that
$$ \prod_p E_p d'_{(p)} \ll \sigma^{O(1)} \tau(d')^{O(1)} \prod_{p \leq R: p \nmid d'} \left(1-\frac{k}{p}\right)  \prod_{p \leq R: p|d} \min(\sigma \log_R p, 1)$$
and hence by Mertens' theorem~\eqref{mertens-3}
$$ \prod_p E_p d'_{(p)} \ll \frac{\sigma^{O(1)} \tau(d'')^{O(1)}}{\log^k R}  \prod_{p \leq R: p|d} \min(\sigma \log_R p, 1).$$
From~\eqref{ftoa} we have
$$\prod_p d''{(p)} = d' \asymp d_1 \cdots d_{k'},$$
and the claim follows.
\end{proof}

\subsection{Elementary consequences of a Siegel zero}\label{sec:consequence}

Recall from Section~\ref{notation-sec} that we use $p^*$ to denote primes that are \emph{exceptional} in the sense that $\chi(p^*) \neq -1$.  It is a well known phenomenon that exceptional primes become rare at scales comparable in log-scale to $q_{\chi}$. For instance, in~\cite[Lemma 3]{heath-brown} it was shown that\footnote{Strictly speaking, these results only claim to control the set where $\chi(p^*) = 1$, ignoring the relatively small number of primes where $\chi(p^*)=0$, but it is not difficult to modify the arguments to also include the latter set.}
\begin{equation}\label{hb-est}
\sum_{p^* \leq q_\chi^{500}} \frac{\log p^*}{p^*} \ll \frac{\log q_\chi}{\sqrt{\log \eta}}
\end{equation}
while in~\cite{german-katai} it was shown more generally that
\begin{equation}\label{gk-est}
\sum_{p^* \leq x} \frac{\log p^*}{p^*} \ll \exp\left( \log_{q_\chi} x \right) \frac{\log q_\chi}{\sqrt{\log \eta}}
\end{equation}
for $q_\chi^{10} \leq x \leq q_\chi^{\log\log \eta / 3}$.  In fact we can do better:

\begin{proposition}\label{prop_Psum}  Let $\eps>0$. Then
for any $x \geq q_\chi^{\frac{1+\eps}{2}}$, one has
\begin{equation}\label{excep-1}
\sum_{q_\chi^{\frac{1+\eps}{2}} < p^* \leq x} \frac{1}{p^*} \ll_\eps \frac{\log_{q_\chi} x
}{\eta} \end{equation}
and for any natural number $m \geq 2$, we have
\begin{equation}\label{excep-2}
\sum_{q_\chi^{\frac{1+\eps}{2m}} < p^* \leq q_\chi^{\frac{1+\eps}{2(m-1)}}} \frac{1}{p^*} \ll_\eps \frac{m}{\eta^{1/m}}.
\end{equation}
\end{proposition}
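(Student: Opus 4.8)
The plan is to work with the non-negative multiplicative function $f:=\mathbf 1*\chi$ (non-negative because $\chi$ is a real character, so $f(p^a)=1+\chi(p)+\dots+\chi(p)^a\ge 0$), together with the two facts: $f(p^*)\ge 1$ at every exceptional prime, and $f(n)=\prod_{p\mid n}(v_p(n)+1)\ge 2^{\omega(n)}$ whenever $n$ is squarefree and composed entirely of exceptional primes. I would first dispose of the exceptional primes $p^*$ with $\chi(p^*)=0$: these divide $q_\chi$, so there are $O(\log q_\chi)$ of them, and since in both \eqref{excep-1} and \eqref{excep-2} each is $>q_\chi^{(1+\eps)/(2m)}$, their contribution is $O(\log q_\chi\cdot q_\chi^{-(1+\eps)/(2m)})$, which is acceptable by \eqref{eta-bound}; so henceforth $\chi(p^*)=1$ and $f(p^*)=2$.

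\textbf{The key upper bound.} Writing $f=\mathbf 1*\chi$ and summing the character variable on the inside,
$$\sum_{n\le y}\frac{f(n)}{n^\beta}=\sum_{e\le y}\frac{1}{e^\beta}\sum_{d\le y/e}\frac{\chi(d)}{d^\beta}.$$
Because $L(\beta,\chi)=0$, each inner sum is the \emph{tail} $-\sum_{d>y/e}\chi(d)d^{-\beta}$ of a convergent series, which by partial summation and the Pólya--Vinogradov bound $|\sum_{d\le M}\chi(d)|\ll\sqrt{q_\chi}\log q_\chi$ is $O(\sqrt{q_\chi}(\log q_\chi)(y/e)^{-\beta})$; this yields the clean estimate $\sum_{n\le y}f(n)n^{-\beta}\ll\sqrt{q_\chi}(\log q_\chi)\,y^{1-\beta}$, and the mechanism is entirely elementary (no zero-free region, no Deuring--Heilbronn), which is exactly why the resulting $\eta$-dependence is superior. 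To get the sharp size I would instead organise this as a Dirichlet hyperbola computation with the division point placed just above $q_\chi^{1/2}$ — or insert a smooth cutoff and shift a Mellin contour — so that the leading term is the residue at $s=1$, a constant times $L(1,\chi)\,y^{1-\beta}$, with a genuine power saving in $q_\chi$ left over; this is precisely where the hypothesis $y\ge q_\chi^{(1+\eps)/2}$ enters, since for shorter intervals one has no control on $\sum_d\chi(d)$. Combined with $L(1,\chi)\ll(1-\beta)\log^2 q_\chi=\log q_\chi/\eta$ and $y^{1-\beta}=\exp(\log_{q_\chi}y/\eta)\asymp 1$ throughout \eqref{x-range}, this produces the required upper bound for $\sum_{n\le y}f(n)n^{-\beta}$.

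\textbf{Lower bound and the role of the ranges.} For \eqref{excep-2} I would restrict the sum to squarefree $n=p_1^*\cdots p_m^*$, a product of $m$ distinct exceptional primes drawn from the $m$-th window; the design of those windows is precisely so that such products all lie in the single interval $(q_\chi^{(1+\eps)/2},q_\chi^{1+\eps}]$, to which the bound of the previous step applies. Then $f(n)=2^m$, and using $(p^*)^{-\beta}\ge(p^*)^{-1}$ and counting ordered tuples,
$$\sum_{q_\chi^{(1+\eps)/2}<n\le q_\chi^{1+\eps}}\frac{f(n)}{n^\beta}\ \ge\ \frac{2^m}{m!}\Big(\sum_{p^*\in I_m}\frac{1}{p^*}\Big)^m(1-o(1)),$$
the off-diagonal correction $(1-o(1))$ being negligible unless $\sum_{p^*\in I_m}1/p^*$ is already below $q_\chi^{-1/4}$, in which case \eqref{excep-2} is trivial via \eqref{eta-bound}. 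Taking $m$-th roots, applying Stirling $(m!)^{1/m}\asymp m/e$, and optimising in $m$ (larger $m$ is favourable here, which is why the bound degrades like $m$ as $m$ grows) yields \eqref{excep-2}. For \eqref{excep-1} I would run the same scheme with the window $(q_\chi^{(1+\eps)/2},x]$ — products of two primes already reach the range $\ge q_\chi^{1+\eps}$ where the upper bound is available — or simply sum the bounds \eqref{excep-2} over $m$, noting that as $m$ varies the windows $(q_\chi^{(1+\eps)/(2m)},q_\chi^{(1+\eps)/(2(m-1))}]$ tile $(1,q_\chi^{(1+\eps)/2}]$.

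\textbf{Expected main obstacle.} The delicate point is making the upper bound in the second step genuinely of size $\asymp 1/\eta$ (rather than merely $O(\log^2 q_\chi)$ or $O(\sqrt{q_\chi}\log q_\chi)$): one must extract the $s=1$ residue cleanly while keeping the error a power saving, balancing the interplay between the Pólya--Vinogradov threshold $q_\chi^{1/2}$, the scale $y^{1-\beta}\asymp 1$, and the weight $n^{-\beta}$ on the full range \eqref{x-range}. This is the step that replaces the classical complex-analytic Deuring--Heilbronn argument, forces the cutoff $q_\chi^{(1+\eps)/2}$, and requires the bulk of the Euler-product and Mertens bookkeeping.
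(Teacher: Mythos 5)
Your overall strategy is sound and captures the essential elementary flavour of the paper's argument — work with the non\-negative multiplicative function $f=1*\chi$, use $f(p^*)\ge 1$, and compare a lower bound coming from products of exceptional primes against an upper bound for $\sum_n f(n)/n^\sigma$ obtained by elementary multiplicative number theory rather than complex\-analytic zero\-density input. But there is a genuine gap in the way the two halves of the comparison are set up, and it costs a factor of $\log^{O(1)}q_\chi$ that the proposition cannot afford.

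The missing idea is an amplification step. The paper does not only take the product $p_1^*\cdots p_m^*$ of primes from the window; it multiplies by an arbitrary $n<q_\chi^{(1+\eps)/2}$ as well, so that the lower bound reads
$$
\sum_{q_\chi^{(1+\eps)/2}<n\le q_\chi^{10}}\frac{f(n)}{n}\ \gg\ \Bigl(\sum_{n<q_\chi^{(1+\eps)/2}}\frac{f(n)}{n}\Bigr)\cdot\frac{1}{m!}\Bigl(\sum_{p^*\in I_m}\frac{1}{p^*}\Bigr)^m.
$$
The amplifier $\sum_{n<q_\chi^{(1+\eps)/2}}f(n)/n$ is then lower\-bounded by $\gg L(1,\chi)\,\eta\log q_\chi$ using the key input $\frac{L'}{L}(1,\chi)\asymp\eta\log q_\chi$ (this, rather than $L(\beta,\chi)=0$ directly, is the arithmetic manifestation of the Siegel zero in the elementary argument). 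The upper bound for the left side is $\ll L(1,\chi)\log q_\chi$. The ratio makes $L(1,\chi)$ \emph{cancel}, and the remaining ratio is exactly $\asymp 1/\eta$. Your version skips the amplifier: you bound the interval sum above by, in your notation, $\ll L(1,\chi)y^{1-\beta}$ and below by $2^m S^m/m!$, and to close the loop you would then need $L(1,\chi)\ll 1/(\eta\log q_\chi)$. That is not available; the best one can say unconditionally is $L(1,\chi)\ll(1-\beta)\log^2 q_\chi=\log q_\chi/\eta$, which is weaker by $\log^2 q_\chi$. After taking $m$\-th roots this yields $\sum_{p^*\in I_m}1/p^*\ll m\,(\log q_\chi)^{2/m}/\eta^{1/m}$, and the spurious factor $(\log q_\chi)^{2/m}$ wrecks the application in Corollary~\ref{excep-bound}: when one sums over $2\le m\le\sqrt{\log\eta}$ the factor $(\log q_\chi)^{2/m}$ is only $O(1)$ if $\eta\gg\log^2 q_\chi$, and there is no such guarantee (the quality $\eta$ may be tiny compared to $\log q_\chi$). (A side note: your claimed main term $L(1,\chi)y^{1-\beta}$ is also off by the factor $1/(1-\beta)\asymp\eta\log q_\chi$ for the sharp\-cutoff sum $\sum_{n\le y}$; switching the exponent from $1$ to $\beta$ in $n^{-\sigma}$ buys nothing here since $y^{1-\beta}\asymp 1$ throughout the range, and the $s=1$ pole of $\zeta$ — not the zero of $L$ at $\beta$ — is what controls the main term.)

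The proposed treatments of \eqref{excep-1} have separate problems. Using products of \emph{two} primes from $(q_\chi^{(1+\eps)/2},x]$ gives at best $\sum 1/p^* \ll\eta^{-1/2}(\dots)$, not the required $\eta^{-1}$ decay; and ``summing \eqref{excep-2} over $m$'' cannot give \eqref{excep-1} because the windows in \eqref{excep-2} tile the interval $(1,q_\chi^{(1+\eps)/2}]$, which is disjoint from the range $(q_\chi^{(1+\eps)/2},x]$ appearing in \eqref{excep-1}. The paper's proof of \eqref{excep-1} is again the amplification trick with a single prime: $\sum_{q_\chi^{(1+\eps)/2}<n\le xq_\chi^{(1+\eps)/2}}f(n)/n\ge\bigl(\sum_{n\le q_\chi^{(1+\eps)/2}}f(n)/n\bigr)\cdot\bigl(\sum_{q_\chi^{(1+\eps)/2}<p\le x}f(p)/p\bigr)$, and the $L(1,\chi)$'s cancel as before, giving $\log_{q_\chi}x/\eta$ with no stray logarithms. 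To repair your argument, insert the amplifier and cite the relation $\frac{L'}{L}(1,\chi)\asymp\eta\log q_\chi$ (together with the explicit formula for $\sum_{n\le y}(1*\chi)(n)/n$ from \cite[Exercise 11.2.3(g)]{mv} and Siegel's lower bound on $L(1,\chi)$ to absorb the $q_\chi^{-\eps/10}$ error).
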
 

The first bound is non-trivial for $x$ as large as $q_\chi^{\eta^{1-\eps_0}}$, while the second bound is non-trivial for primes $p^*$ as small as $q_\chi^{1/\log^{1-\eps_0} \eta}$. It is not difficult to recover~\eqref{gk-est} (and hence~\eqref{hb-est}) from the above proposition by taking a suitable linear combination of~\eqref{excep-1} and~\eqref{excep-2} for $m \leq \sqrt{\log \eta}$, and using Mertens' theorem to control the contribution of exceptional primes $p^* \leq q_\chi^{10/\sqrt{\log \eta}}$ (say); we leave the details to the interested reader.

\begin{proof}  For any $x \geq q_\chi^{\frac{1+\eps}{2}}$ we have from~\cite[Exercise 11.2.3(g)]{mv} that
$$\sum_{n \leq x} \frac{1*\chi(n)}{n} = (\log x + \gamma) L(1,\chi) + L'(1,\chi) + O_\eps(q_\chi^{-\eps/10}).$$
From Siegel's theorem we have $L(1,\chi) \gg_\eps q_\chi^{-\eps/10}$, and hence
\begin{equation}\label{nxb}
\sum_{n \leq x} \frac{1*\chi(n)}{n} = L(1,\chi) \left(\log x + \frac{L'}{L}(1,\chi) + O_\eps(1) \right).
\end{equation}
From~\cite[Theorem 11.4]{mv} we also have $\frac{L'}{L}(1,\chi) \asymp \eta \log q_\chi$. Thus, \eqref{nxb} gives
\begin{align}\label{L1chi}
\sum_{n \leq q_\chi^{\frac{1+\eps}{2}}} \frac{1*\chi(n)}{n} \gg L(1,\chi) \eta \log q_\chi,
\end{align}
while applying~\eqref{nxb} with $x$ replaced by $q_\chi^{\frac{1+\eps}{2} }, x q_\chi^{\frac{1+\eps}{2} }$ and subtracting we obtain 
\begin{equation}\label{nxt}
\sum_{q_\chi^{\frac{1+\eps}{2} } < n \leq x q_\chi^{\frac{1+\eps}{2} } } \frac{1*\chi(n)}{n} =L(1,\chi) (\log x+O_\eps(1)).
\end{equation}

On the other hand, from the non-negativity and multiplicativity of $1*\chi$ we have
$$\sum_{q_\chi^{\frac{1+\eps}{2} }< n \leq x q_\chi^{\frac{1+\eps}{2} } } \frac{1*\chi(n)}{n} \geq \left(\sum_{n \leq q_\chi^{\frac{1+\eps}{2}}} \frac{1*\chi(n)}{n}\right) \left(\sum_{q_\chi^{\frac{1+\eps}{2}} < p \leq x} \frac{1*\chi(p)}{p}\right).$$

Since $1*\chi(p)$ is non-negative and is at least one when $p$ is exceptional, the first claim~\eqref{excep-1} follows.

In a similar vein,  since any $n\leq q_{\chi}^{10}$ has $\leq \binom{20m}{m}$ representations in the form $n'p_1\cdots p_m$ with $q_{\chi}^{(1+\varepsilon)/2}<p_1<p_2<\cdots <p_m$, we have for any natural number $m \geq 2$ that
\begin{align*}
&\sum_{q_\chi^{\frac{1+\eps}{2} } < n \leq q_\chi^{10} } \frac{1*\chi(n)}{n}\\ 
&\geq \binom{20m}{m}^{-1}\sum_{n < q_\chi^{\frac{1+\eps}{2}}} \frac{1*\chi(n)}{n} \sum_{q_\chi^{\frac{1+\eps}{2m}} < p_1 < \cdots < p_m \leq q_\chi^{\frac{1+\eps}{2(m-1)}}: p_1,\dots,p_m \nmid n} \frac{1*\chi(p_1)}{p_1} \cdots \frac{1*\chi(p_m)}{p_m} \\
&\geq \binom{20m}{m}^{-1}\sum_{n < q_\chi^{\frac{1+\eps}{2}}} \frac{1*\chi(n)}{n} \sum_{q_\chi^{\frac{1+\eps}{2m}} < p^*_1 < \cdots < p^*_m \leq q_\chi^{\frac{1+\eps}{2(m-1)}}: p^*_1,\cdots,p^*_m \nmid n} \frac{1}{p_1^* \cdots p_m^*} \\
&= \binom{20m}{m}^{-1}\sum_{n < q_\chi^{\frac{1+\eps}{2}}} \frac{1*\chi(n)}{n}\ \frac{\sum_{q_\chi^{\frac{1+\eps}{2m}} < p^*_1, \dots, p^*_m \leq q_\chi^{\frac{1+\eps}{2(m-1)}}: p^*_1,\dots,p^*_m \nmid n, \hbox{ distinct}} \frac{1}{p_1^* \cdots p_m^*}}{m!}.
\end{align*}
 Observe that once $n<q_{\chi}^{(1+\varepsilon)/2}$ and some of the exceptional primes $p^*_1,\dots,p^*_j$, $j < m$ have been chosen, the restrictions that the exceptional prime $p^*_{j+1}$ be distinct from $p^*_1,\dots,p^*_j$ and not divide $n$ only excludes at most $2m$ primes $p^*_{j+1}$ from the range $q_\chi^{\frac{1+\eps}{2m}} < p^*_{j+1} \leq
q_\chi^{\frac{1+\eps}{2(m-1)}}$, since $n$ has at most $m$ factors in this range.  Thus we have
$$
\sum_{q_\chi^{\frac{1+\eps}{2} } < n \leq q_\chi^{10} } \frac{1*\chi(n)}{n} 
\geq \binom{20m}{m}^{-1}\left(\sum_{n < q_\chi^{\frac{1+\eps}{2}}} \frac{1*\chi(n)}{n}\right) \frac{\left(\asum_{q_\chi^{\frac{1+\eps}{2m}} < p^* \leq q_\chi^{\frac{1+\eps}{2(m-1)}} } \frac{1}{p^*}\right)^m}{m!},
$$
where the asterisk in the sum means that we are allowed to delete the $2m$ largest terms from the sum (or delete the sum entirely, i.e. replace it by zero, if there are fewer than $2m$ terms in all). The estimates~\eqref{L1chi},~\eqref{nxt} then give
$$\asum_{q_\chi^{\frac{1+\eps}{2m}} < p^* \leq q_\chi^{\frac{1+\eps}{2(m-1)}}} \frac{1}{p^*} \ll_\eps \left( \frac{\binom{20m}{m}m! L(1,\chi) \log q_\chi}{L(1,\chi) \eta \log q_\chi}\right)^{1/m} \ll \frac{m}{\eta^{1/m}}.$$
One can reinstate the top $2m$ terms from the sum on the left-hand side, since their contribution is $\ll m/q_\chi^{1/(2m)}\ll m\eta^{-1/m}$ by the Siegel bound~\eqref{eta-bound}.  The claim~\eqref{excep-2} follows.
\end{proof}

\begin{corollary}\label{excep-bound}  Let $q_{\chi}^{(1+\varepsilon)/2}\leq x\leq q_{\chi}^{\eta^{1/2}}$. We have
$$
\sum_{R_0 \leq p^* \leq x} \frac{1}{p^*} \ll \exp( - \sqrt{\log \eta} / 2)$$
and
$$
\sum_{p^*} \frac{\min(\log^{0.1} \eta \log_R p^*,1)}{(p^*)^{1+\frac{1}{\log x}}} \ll \frac{1}{\log^{0.3} \eta}.$$
\end{corollary}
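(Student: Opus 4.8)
The plan is to deduce both estimates from Proposition~\ref{prop_Psum} by a dyadic‑in‑exponent decomposition, together with Mertens' theorem and one partial summation. First record, via \eqref{R0-def} and \eqref{eta-bound}, that $\log R_0 = \log x/\sqrt{\log\eta}$ with $\log\eta \ll \log q_\chi \ll \log x$, so that $R_0 \ge q_\chi^{(1+\eps)/(2\sqrt{\log\eta})}$, $\log R_0 \gg \sqrt{\log\eta} \to \infty$, and Mertens‑type bounds apply freely at the scale $R_0$.

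For the first estimate, split $\sum_{R_0 \le p^* \le x} 1/p^*$ at $q_\chi^{(1+\eps)/2}$. The primes in $(q_\chi^{(1+\eps)/2}, x]$ contribute $\ll_\eps \log_{q_\chi} x/\eta \le \eta^{-1/2}$ by \eqref{excep-1} (using $x \le q_\chi^{\eta^{1/2}}$). The range $R_0 \le p^* \le q_\chi^{(1+\eps)/2}$, if nonempty, is covered by the intervals $(q_\chi^{(1+\eps)/(2m)}, q_\chi^{(1+\eps)/(2(m-1))}]$ for $2 \le m \le M := \lceil \sqrt{\log\eta}\rceil$ — valid because $q_\chi^{(1+\eps)/(2M)} \le q_\chi^{(1+\eps)/(2\sqrt{\log\eta})} \le R_0$ — and summing \eqref{excep-2} over these $m$, using that $m \mapsto m/\eta^{1/m}$ is increasing, gives $\ll M^2 \eta^{-1/M} \ll \log\eta\,\exp(-\sqrt{\log\eta})$. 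Both contributions are $\ll \exp(-\sqrt{\log\eta}/2)$ for $\eta$ large.

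For the second estimate, write $\Sigma := \sum_{p^*} \frac{\min(\log^{0.1}\eta\,\log_R p^*,\,1)}{(p^*)^{1+1/\log x}}$ and split at $p^* = R_0$. For $p^* > R_0$ bound the minimum by $1$ and split once more at $p^* = x$: the part $R_0 < p^* \le x$ is $\le \sum_{R_0 \le p^* \le x} 1/p^* \ll \exp(-\sqrt{\log\eta}/2)$ by the first estimate, while for $p^* > x\ (\ge q_\chi^{(1+\eps)/2})$, writing $S^*(t) := \sum_{x < p^* \le t} 1/p^* \ll_\eps \log_{q_\chi} t/\eta$ (via \eqref{excep-1}) and integrating the weight $(p^*)^{-1/\log x} = e^{-\log_x p^*}$ by parts yields $\ll \eta^{-1}\int_1^\infty u \log_{q_\chi}x\,e^{-u}\,du \ll \log_{q_\chi} x/\eta \le \eta^{-1/2}$. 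Hence the $p^* > R_0$ part of $\Sigma$ is $\ll \exp(-\sqrt{\log\eta}/2)$.

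For $p^* \le R_0$, note $\log^{0.1}\eta\,\log_R p^* \le \log^{0.1}\eta\,\log_R R_0 = \log^{1/(5\max(1,k)) - 2/5}\eta \le \log^{-1/5}\eta < 1$ by \eqref{R0-def}, \eqref{R-def}, so the minimum equals $\log^{0.1}\eta\,\log_R p^*$ and (as $(p^*)^{-1/\log x}\le 1$) this part of $\Sigma$ is $\le \frac{\log^{0.1}\eta}{\log R}\sum_{p^* \le R_0} \frac{\log p^*}{p^*}$. This sum is the crux: since the weight $\log p^*$ does not decay, \eqref{excep-2} cannot be summed with gain all the way down to bounded $p^*$ (that bound degrades once $m \asymp \log\eta$). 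Instead split it at $y_0 := q_\chi^{1/\log^{3/5}\eta}$ (so $y_0 \le R_0$ for $\eta$ large): by Mertens the part $p^* \le y_0$ is $\ll \log y_0 = \log q_\chi/\log^{3/5}\eta \ll \log R_0/\log^{1/10}\eta$, and the part $y_0 < p^* \le R_0$ is $\le \log R_0 \sum_{y_0 < p^* \le R_0} 1/p^*$, where the last sum is estimated exactly as in the first estimate but with $M := \lceil \log^{3/5}\eta\rceil$, hence is $\ll M^2\eta^{-1/M} + \eta^{-1/2} \ll \log^{-1/10}\eta$ (since $\log\eta/M \gg \log^{2/5}\eta$, $M^2\eta^{-1/M}$ decays faster than any power of $\log\eta$). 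Thus $\sum_{p^* \le R_0} \frac{\log p^*}{p^*} \ll \log R_0/\log^{1/10}\eta$, so the $p^* \le R_0$ part of $\Sigma$ is $\ll \frac{\log^{0.1}\eta}{\log R}\cdot\frac{\log R_0}{\log^{1/10}\eta} = \frac{\log R_0}{\log R} = \log^{1/(5\max(1,k)) - 1/2}\eta \le \log^{-3/10}\eta$; adding the two parts gives $\Sigma \ll \log^{-3/10}\eta$. The main obstacle is exactly this last balancing: the Mertens loss on $p^* \le y_0$ forces $y_0$ to be fairly small, but \eqref{excep-2} must still be summable with gain down to $y_0$, and the exponents $1/10$ and $3/5$ must be chosen so that both requirements hold and the final bound is precisely $\log^{-3/10}\eta$.
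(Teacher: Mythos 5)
Your proof is correct and uses the same basic tools as the paper (Proposition~\ref{prop_Psum} applied over a range of scales $m$, together with Mertens' theorem), but with a noticeably different decomposition for the second estimate. The first estimate is essentially the paper's argument: \eqref{excep-1} for primes above $q_\chi^{(1+\eps)/2}$ and a geometric sum of \eqref{excep-2} over $m \lesssim \sqrt{\log\eta}$ for the rest; you are a bit more careful than the paper in checking that the union of the \eqref{excep-2} intervals actually reaches down to $R_0$. For the second estimate the routes diverge. The paper splits the sum at $R^{1/\log^{0.4}\eta}$ and $x^{\log^{0.1}\eta}$: the threshold $R^{1/\log^{0.4}\eta}$ is tuned so that \eqref{mertens} alone already gives $\ll\log^{-0.3}\eta$ on the small side, \eqref{mertens-2} handles the tail, and \eqref{excep-1}/\eqref{excep-2} cover only the middle range. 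You instead split at $R_0$ and $x$, use a partial summation against \eqref{excep-1} for $p^*>x$ (where the paper invokes \eqref{mertens-2}), reuse the first estimate for $R_0<p^*\le x$, and — because $\log^{0.1}\eta\log_R R_0 \asymp\log^{-1/5}\eta$ so Mertens at the cutoff $R_0$ only yields $\log^{-0.2}\eta$ — you are forced to recover an extra $\log^{-0.1}\eta$ by a further split at $y_0=q_\chi^{1/\log^{3/5}\eta}$ and another application of \eqref{excep-2} there. Both arguments are sound; the paper's choice of cutoff $R^{1/\log^{0.4}\eta}$ is the cleaner one because it is calibrated to the target $\log^{-0.3}\eta$ and makes the Mertens contribution come out right without any extra layer, while your version illustrates that the scale $R_0$ also works but only after an additional balancing step of the kind you describe at the end.
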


This bound will be used in steps (i), (ii) of the main argument.

\begin{proof} From~\eqref{excep-1} (with $\eps=1$) and~\eqref{x-range} we have
$$ \sum_{q_\chi < p^* \leq x} \frac{1}{p^*} \ll \frac{\sqrt{\eta}}{\eta} \ll \exp( - \sqrt{\log \eta} / 2)$$
and from~\eqref{excep-2} we similarly have
$$
\sum_{q_\chi^{\frac{1}{m}} < p^* \leq q_\chi^{\frac{1}{m-1}}} \frac{1}{p^*} \ll \frac{m}{\eta^{1/m}}$$
for all $m \geq 2$.  Summing over $2 \leq m \leq \sqrt{\log \eta} + 1$, we obtain the first claim.

Now we prove the second claim.  The contribution of those $p^*$ with $p^* \geq x^{\log^{0.1} \eta}$ is acceptable by~\eqref{mertens-2}, while the contribution of those $p^*$ with $p^* \leq R^{1/\log^{0.4} \eta}$ is also acceptable by ~\eqref{mertens}.  Thus it remains to show that
$$ \sum_{R^{1/\log^{0.4} \eta} < p^* < x^{\log^{0.1} \eta}} \frac{1}{p^*} \ll \frac{1}{\log^{0.3} \eta}.$$
The contribution of those $p^*$ with $q_\chi < p^* < x^{\log^{0.1} \eta}$ is acceptable by~\eqref{excep-1} (for $\eps=1$), \eqref{x-range}, while the contribution of those $p^*$ with $R^{1/\log^{0.4} \eta }< p^* \leq q_\chi$ is acceptable by~\eqref{excep-2} (for $\eps=1$ and $2 \leq m \ll \log^{0.5} \eta$, say) and~\eqref{R-def}.
\end{proof}

\subsection{Consequences of the Weil bound for character sums}

Let $f \colon \Z \to \Z$ be a polynomial of degree $O(1)$.  If $p$ is a prime we have the standard Weil bounds
$$ \sum_{n \in \Z/p\Z} \chi_p( f(n)) e_p(an) \ll p^{1/2}$$
uniformly for all integers $a$ whenever $f$ is not a constant multiple of perfect square modulo $p$, where $\chi_p$ is the quadratic character modulo $p$; see~\cite{weil} (or~\cite{perelmuter}).  When $f$ is a constant multiple of a perfect square, we can of course use the trivial bound of $O(p)$. Since the exceptional modulus $q_\chi$ is a fundamental discriminant, it is of the form $2^j p_1 \cdots p_m$ for some $j \leq 3$ and distinct odd primes $p_1,\dots,p_m$, and so from the Chinese remainder theorem we obtain the bounds
$$ \sum_{n \in \Z/q_\chi\Z} \chi( f(n)) e_{q_\chi}(an) \ll \tau(q_\chi)^{O(1)} q_\chi^{1/2} d^{1/2}$$
uniformly in $a$, where $d$ is the largest factor of $q_\chi$ for which $f$ is a constant multiple of a perfect square modulo $d$.  Applying~\eqref{divisor-bound} and completion of sums (see~\cite[Lemma 12.1]{ik}), we conclude that
$$ \sum_{n \in I} \chi( f(n)) \ll_\eps q_\chi^{1/2+\eps} d^{1/2}$$
for any interval $I$ of length at most $q_\chi$ and any $\eps>0$; by subdividing longer intervals into intervals of length $q_\chi$, plus a remainder, we conclude that
\begin{equation}\label{sumni}
 \sum_{n \in I} \chi( f(n)) \ll_\eps q_\chi^{1/2+\eps} d^{1/2} \left( \frac{|I|}{q_\chi} + 1 \right)
\end{equation}
for any interval $I$ and any $\eps>0$.

This gives us the following bounds:

\begin{lemma}\label{and}  Let $d_1,\dots,d_{k+\ell}$ be natural numbers.  Let $I$ be an interval in $[1,x]$.
Let $J$ be a non-empty subset of $\{1,\dots,k+\ell\}$, and for each $j \in J$, let $d'_j$ be a factor of $d_j$. Then
$$ \E_{n \leq x} 1_I(n) \left(\prod_{j=1}^{k+\ell} 1_{d_j | n+h_j}\right) \prod_{j \in J} \chi\left(\frac{n+h_j}{d'_j}\right)
\ll_\eps q_\chi^{1/2+\eps} (d_1 \cdots d_{k+\ell},q_\chi)^{1/2} \left( \frac{1}{q_\chi d_1 \cdots d_{k+\ell}} + \frac{1}{x} \right)$$
for any $\eps>0$, where we use the notation $h_{k+j} \coloneqq h'_j$ for $j=1,\dots,\ell$.
\end{lemma}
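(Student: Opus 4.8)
The plan is to reduce the claimed bound to the completed character sum estimate \eqref{sumni} by a routine application of the Chinese remainder theorem, after first disposing of the degenerate cases.

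\emph{Setup and reduction.} Write $d \coloneqq [d_1,\dots,d_{k+\ell}]$. By Lemma \ref{chin}, if $(d_i,d_j)$ fails to divide $h_i-h_j$ for some pair, then the product $\prod_j 1_{d_j|n+h_j}$ vanishes identically and the left-hand side is zero; so we may assume $(d_i,d_j) \mid h_i-h_j$ for all $i<j$, in which case $\prod_j 1_{d_j|n+h_j} = 1_{n = a\ (d)}$ for a unique $a\ (d)$ with $a = -h_j\ (d_j)$, and moreover $d \asymp d_1\cdots d_{k+\ell}$. Thus the left-hand side becomes
$$ \frac{1}{\lfloor x\rfloor} \sum_{\substack{n \in I,\ n \leq x \\ n = a\ (d)}} \prod_{j \in J} \chi\!\left(\frac{n+h_j}{d'_j}\right).$$
Writing $n = a + dm$, this is a sum over $m$ in an interval $I'$ (the intersection of $I$ and $[1,x]$, rescaled), of length $|I'| \ll x/d + 1$, of the quantity $\prod_{j\in J}\chi((a+h_j)/d'_j + (d/d'_j)m)$. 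Since $d_j \mid a + h_j$ and $d'_j \mid d_j$, each $(a+h_j)/d'_j$ is an integer and each $d/d'_j$ is an integer divisible by a nontrivial amount, so we are looking at $\sum_{m \in I'} \chi(f(m))$ where $f(m) = \prod_{j \in J}((a+h_j)/d'_j + (d/d'_j)m)$ is viewed modulo $q_\chi$; more precisely, since $\chi$ has modulus $q_\chi$, only the residues of the linear coefficients modulo $q_\chi$ matter, and $f$ is a polynomial of degree $|J| = O(1)$.

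\emph{Applying the Weil bound.} We now invoke \eqref{sumni} with this $f$ and the interval $I'$ of length $O(x/d+1)$. The bound \eqref{sumni} gives $\sum_{m\in I'} \chi(f(m)) \ll_\eps q_\chi^{1/2+\eps} d_*^{1/2}(\frac{|I'|}{q_\chi}+1)$ where $d_*$ is the largest divisor of $q_\chi$ modulo which $f$ is a constant times a perfect square. The key point is to show $d_* \ll (d_1\cdots d_{k+\ell}, q_\chi)$ — or at least a divisor of $q_\chi$ of that size up to $\tau(q_\chi)^{O(1)}$ factors, which is harmless by \eqref{divisor-bound}. For a prime power $p^b \| q_\chi$ with $p \nmid d_1\cdots d_{k+\ell}$: then $p$ divides none of the $d/d'_j$ either would be false — rather, we must check that $f$ is genuinely non-square modulo such $p$. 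Since $J$ is non-empty and $p \nmid d$, the linear forms $(a+h_j)/d'_j + (d/d'_j)m$ have leading coefficients $d/d'_j$ which may or may not be divisible by $p$; here one uses that the $h_j$ are \emph{distinct}, so after reducing mod $p$ (for $p$ large, i.e. $p > \max|h_i-h_j|$) the roots of the distinct nonzero linear factors are distinct, whence $f$ is squarefree modulo $p$ and hence not a constant times a square (as $|J|\geq 1$ forces at least one factor). For the finitely many small primes $p = O(1)$ one simply absorbs the loss into the $\tau(q_\chi)^{O(1)}$ and $\eps$. This yields $d_* \ll_\eps q_\chi^\eps (d_1\cdots d_{k+\ell}, q_\chi)$, and after dividing by $\lfloor x \rfloor \asymp x$ and using $d \asymp d_1\cdots d_{k+\ell}$, the bound $|I'|/q_\chi + 1 \ll x/(q_\chi d_1\cdots d_{k+\ell}) + 1$ produces exactly the two terms $\frac{1}{q_\chi d_1\cdots d_{k+\ell}} + \frac{1}{x}$ claimed (after multiplying through by $q_\chi^{1/2+\eps}(d_1\cdots d_{k+\ell},q_\chi)^{1/2}/x$ and cancelling the $1/x$ against the $\lfloor x\rfloor$ normalization, i.e. $\frac{1}{x}\cdot q_\chi^{1/2+\eps} d_*^{1/2}\cdot\frac{x}{q_\chi d} \ll q_\chi^{1/2+\eps}(\dots)^{1/2}\cdot\frac{1}{q_\chi d}$, and $\frac{1}{x}\cdot q_\chi^{1/2+\eps}d_*^{1/2}$ directly).

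\emph{Main obstacle.} The only genuinely delicate point is verifying that $f$ is not a constant multiple of a perfect square modulo the primes $p$ dividing $q_\chi$ but not $d_1\cdots d_{k+\ell}$ — i.e. controlling $d_*$ by $(d_1\cdots d_{k+\ell},q_\chi)$ up to acceptable divisor-bound and $q_\chi^\eps$ losses. This is where the distinctness of $h_1,\dots,h_k,h'_1,\dots,h'_\ell$ is essential: it guarantees the linear factors of $f$ are pairwise non-proportional modulo all large $p$, so $f$ is separable and, being of degree $|J| \geq 1 $ with distinct roots, cannot be a square times a unit. Everything else — the change of variables, the reduction via Lemma \ref{chin}, the subdivision of $I'$ into blocks of length $q_\chi$ already built into \eqref{sumni}, and the final bookkeeping — is entirely routine.
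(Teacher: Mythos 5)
Your proof is correct and follows essentially the same route as the paper's: reduce via Lemma \ref{chin} to a single residue class $n \equiv a\ (d)$ with $d\asymp d_1\cdots d_{k+\ell}$, change variables so the character sum becomes $\sum_m \chi(f(m))$ for a degree-$|J|$ polynomial $f$, and feed this into \eqref{sumni}. The heart of the matter — controlling the ``square part'' modulus $d_*$ by $(d_1\cdots d_{k+\ell},q_\chi)$ — is argued exactly as in the paper: for a prime $p\nmid d$, a square structure for $f\bmod p$ would force a repeated root, hence $p\mid h_i-h_j$ for some $i<j$, hence $p=O(1)$ since the shifts are fixed and distinct; the $O(1)$ primes (and the bounded power of $2$ in a fundamental discriminant) are absorbed harmlessly. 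Your final bookkeeping is compressed and slightly informal but lands on the claimed bound.
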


This bound will be used in step (v) of the main argument, to dispose of any ``Type I sum'' contributions that are twisted by one or more factors of the exceptional character $\chi$.

\begin{proof} 
By Lemma~\ref{chin} we may assume that $(d_i,d_j)|h_i-h_j$ for all $1 \leq i < j \leq k+\ell$ and replace the conditions $d_j | n+h_j$ with $n = a\ (d)$ where
$$ d \coloneqq [d_1,\dots,d_{k+\ell}] \asymp d_1 \cdots d_{k+\ell}$$
and
$$a = -h_j\ (d_j)$$
for $j=1,\dots,k+\ell$.  Our task is now equivalent to showing that
$$ \sum_{n: dn+a \in I} \prod_{j \in J} \chi\left(\frac{dn+a+h_j}{d'_j} \right) \ll_\eps q_\chi^{1/2+\eps} (d,q_\chi)^{1/2} \left( \frac{x}{q_{\chi} d} + 1 \right) .$$
We can write the left-hand side as
$$ \sum_{n: dn+a \in I} \chi(f(n))$$
where 
$$ f(n) \coloneqq \prod_{j\in J} \frac{dn+a+h_j}{d'_j}.$$
Suppose that there is a prime $p$ not dividing $d$ such that $f$ is a constant multiple of a square modulo $p$.  Then the roots $-\frac{a+h_j}{d}\ (p)$ of $f$ must experience a repetition, and hence $p$ divides $h_i-h_j$ for some $1 \leq i < j \leq k+\ell$.  Since the $h_1,\dots,h_{k+\ell}$ are fixed, this forces $p=O(1)$.  From the Chinese remainder theorem (and the fact that $q_\chi$ is a fundamental discriminant), we conclude that the largest factor $d'$ of $q_\chi$ for which $f$ is a constant multiple of a square modulo $d'$ is $O( (d,q_\chi))$. The claim now follows from~\eqref{sumni}.
\end{proof}

\subsection{Consequences of Kloosterman sum bounds}

We recall\footnote{For the applications in this paper one could also proceed using the weaker but more elementary bounds of Kloosterman~\cite{kloosterman}, as the important thing is that we gain a power savings over the trivial bound of $q$, at the cost of degrading the numerical exponent $10k$ in~\eqref{x-range} somewhat.  We leave the details of this variant of the argument to the interested reader.} Estermann's form~\cite{estermann}
$$
\left|\sum_{x \in \Z/q\Z: (x,q)=1} e_q(u_1 x + u_2 x^*)\right| \leq \tau(q) q^{1/2} (u_1,u_2,q)^{1/2}
$$
of the Weil bound for Kloosterman sums, where $x^*$ is the inverse of $x$ in $\Z/q\Z$ and $u_1,u_2$ are arbitrary integers.  From this and a simple change of variables we see that
\begin{equation}\label{estermann}
|\E_{n_1,n_2 \in \Z} 1_{w n_1 n_2 = a\ (q)} e_q(u_1 n_1 + u_2 n_2)| \leq \tau(q) q^{-3/2} (u_1,u_2,q)^{1/2}
\end{equation}
for any natural number $q$ and integers $w, a,u_1,u_2$ with $(w,q) = (a,q)=1$, where we use the averaging notation
$$ \E_{n_1,n_2 \in \Z} f(n_1,n_2) \coloneqq \frac{1}{L^2} \sum_{n_1=1}^L \sum_{n_2=1}^L f(n_1,n_2)$$
whenever $f \colon \Z^2 \to \C$ is a periodic function with some period $L$ (thus, $f(n_1+Lm_1, n_2+Lm_2) = f(n_1,n_2)$ for all integers $n_1,n_2,m_1,m_2$).

We will need to extend the bound~\eqref{estermann} to the case where $a$ shares a common factor with $q$, and where we also insert a periodic weight:

\begin{lemma}[Fourier coefficients on a hyperbola]\label{fourier-expand}  Let $q$ be a natural number, and let $a,u_1,u_2$ be integers.  Let $q_0$ be a factor of $q$ such that $(a,q) | q_0$.  Let $f \colon \Z^2 \to \C$ be a $1$-bounded\footnote{A function $f$ is $1$-bounded if $|f(x)| \leq 1$ for all $x$ in the domain of $f$.} function with period $q_0$. Then
$$
|\E_{n_1,n_2 \in \Z} f(n_1,n_2) 1_{n_1 n_2 = a\ (q)} e_q(u_1n_1 + u_2n_2)| \leq \tau(q_0)^2 q_0^{3/2} \tau(q) q^{-3/2} (u_1,u_2,q)^{1/2}.$$
\end{lemma}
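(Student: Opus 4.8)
The plan is to reduce the weighted Kloosterman-type sum to the unweighted bound \eqref{estermann} by expanding the $q_0$-periodic weight $f$ into its discrete Fourier series modulo $q_0$. Write
\[
f(n_1,n_2) = \sum_{b_1,b_2 \in \Z/q_0\Z} \hat f(b_1,b_2)\, e_{q_0}(b_1 n_1 + b_2 n_2),
\]
where the Fourier coefficients satisfy $|\hat f(b_1,b_2)| \leq 1$ since $f$ is $1$-bounded (indeed $\hat f(b_1,b_2) = \E_{n_1,n_2 \in \Z/q_0\Z} f(n_1,n_2) e_{q_0}(-b_1 n_1 - b_2 n_2)$, an average of $1$-bounded terms). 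Substituting and swapping the (finite) sums, the quantity to bound becomes
\[
\Big| \sum_{b_1,b_2 \in \Z/q_0\Z} \hat f(b_1,b_2)\, \E_{n_1,n_2 \in \Z} 1_{n_1 n_2 = a\ (q)}\, e_q\big((u_1 + (q/q_0) b_1) n_1 + (u_2 + (q/q_0) b_2) n_2\big) \Big|,
\]
using $e_{q_0}(b_i n_i) = e_q((q/q_0) b_i n_i)$. By the triangle inequality and the bound on $\hat f$, it suffices to estimate each inner average and sum over the $q_0^2$ choices of $(b_1,b_2)$.

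The inner average is not yet of the form \eqref{estermann}, because \eqref{estermann} requires $(a,q)=1$. Here $(a,q) \mid q_0$, so I would factor out the common part: write $e \coloneqq (a,q)$, and split the congruence $n_1 n_2 \equiv a \ (q)$ according to how $e$ distributes between $n_1$ and $n_2$. Concretely one can parametrize solutions by writing $n_i = e_i m_i$ with $e_1 e_2$ a suitable factorization adapted to $e$ (and $\gcd(m_1 m_2, q/e) $ constrained), reducing to a congruence of the shape $w m_1 m_2 \equiv a'\ (q')$ with $(a',q')=1$ and $q' \asymp q/e$, to which \eqref{estermann} (with its weight $w$) applies and yields a gain of $(q')^{-3/2}$ together with a gcd factor $(u_1', u_2', q')^{1/2}$ bounded by $(u_1, u_2, q)^{1/2}$. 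Since $e \mid q_0$, all the lost factors of $e$ and $q_0$ can be absorbed into the claimed $\tau(q_0)^2 q_0^{3/2}$; the divisor factor $\tau(q)$ and the power $q^{-3/2}$ come directly from \eqref{estermann}, and the number of Fourier modes $q_0^2$ combines with the per-mode bound to give the stated $\tau(q_0)^2 q_0^{3/2}$ (after noting $q_0^2 \cdot q_0^{-3/2} \asymp q_0^{1/2}$, with the remaining $q_0$ powers coming from the $e$-bookkeeping and a crude $\tau(q_0)^2$ overcount).

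The main obstacle is the bookkeeping for the non-coprime case $(a,q) \mid q_0$: handling the congruence $n_1 n_2 \equiv a\ (q)$ cleanly when $a$ and $q$ share the factor $e$, keeping track of which residues of $n_1, n_2$ modulo the various prime powers are forced, and verifying that after reduction one genuinely lands in a coprime Kloosterman configuration where \eqref{estermann} applies with the correct gcd factor $(u_1,u_2,q)^{1/2}$ surviving. Everything else — the Fourier expansion, the $1$-boundedness of $\hat f$, the triangle inequality, and counting the $q_0^2$ modes — is routine. I would be somewhat generous in the divisor and $q_0$-power losses rather than optimize, since the statement already has room ($\tau(q_0)^2 q_0^{3/2}$ is far from tight) and these factors are harmless in the intended application.
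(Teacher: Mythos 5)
Your high-level plan—Fourier-expand the $q_0$-periodic weight $f$ and reduce to the unweighted Estermann bound \eqref{estermann}—is also the paper's starting point, but the specific implementation loses a factor that makes the final bound fail. Using the trivial $l^\infty$ estimate $|\hat f(b_1,b_2)| \le 1$ and the triangle inequality over all $q_0^2$ Fourier modes costs a factor of $q_0^2$, whereas the lemma has room only for $\tau(q_0)^2\, q_0^{3/2}$. Your parenthetical ``$q_0^2\cdot q_0^{-3/2}\asymp q_0^{1/2}$'' does not close this; there is no natural source of a $q_0^{-3/2}$ to set against the $q_0^2$ modes. Concretely, take $q=p^2$, $q_0=p$, $a=1$, $u_1=1$, $u_2=0$. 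Then $(u_1,u_2,q)=1$, and since $p\nmid 1+pb_1$ for every $b_1$, each shifted gcd $(u_1',u_2',q)$ equals $1$, so the mode-sum is exactly $p^2$, yielding $\tau(p^2)p^{-3}\cdot p^2=3/p$, which exceeds the claimed $\tau(p)^2 p^{3/2}\tau(p^2)p^{-3}=12p^{-3/2}$ once $p>16$. Relatedly, the assertion that $(u_1',u_2',q')^{1/2}$ is bounded by $(u_1,u_2,q)^{1/2}$ is false in general: with $u_i'=u_i+(q/q_0)b_i$ the gcd can grow by a factor as large as $q_0$ (e.g. $u_i=0$, $b_i=0$, $q_0=q$).

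The paper's proof avoids this overshoot with two extra moves inside the Fourier expansion. It first conditions on $q_1=(n_1,q_0)$, $q_2=(n_2,q_0)$ (only $\tau(q_0)^2$ choices, because these must divide $(a,q)\mid q_0$, which is exactly where that hypothesis enters), then changes variables $n_i=q_i n_i'$ so that $f(q_1 n_1', q_2 n_2')$ has the smaller periods $q_0/q_1$ and $q_0/q_2$. The key quantitative gain is then Plancherel $\sum|c_{k_1,k_2}|^2\le 1$ plus Cauchy--Schwarz, giving $\sum|c_{k_1,k_2}|\le q_0/\sqrt{q_1 q_2}$ rather than the $l^\infty$ count $(q_0/q_1)(q_0/q_2)$—roughly a full factor of $q_0$ sharper. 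That, combined with the $1/(q_1q_2)$ from the change of variables and the gcd accounting $d\mid q_0 q_1 q_2 (u_1,u_2,q)$, $(a,q)\le q_1q_2$, produces the stated $\tau(q_0)^2 q_0^{3/2}$. The non-coprimality bookkeeping you worried about is indeed handled here (the change of variables delivers a primitive $w=q_1q_2/(a,q)$ modulo $q'=q/(a,q)$ so that Estermann applies), but it is not the main obstacle; the missing ingredient in your proposal is the Plancherel/Cauchy--Schwarz bound on the Fourier coefficients.
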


The factor $\tau(q_0)^2 q_0^{3/2}$ can be improved somewhat, but we will not attempt to optimize it here.  This bound will be needed in step (iv) of the main argument, in order to dispose of the non-Type I portion $\Lambda_\Siegel^\flat$ to the Siegel approximant $\Lambda_\Siegel$.

\begin{proof}  If $n_1 n_2 = a\ (q)$, then from considering the prime factorisations of $n_1,n_2,a,q$ we see that $(n_1,q_0), (n_2,q_0)$ must be factors of $(a,q)$ and hence of $q_0$; also, we have $((n_1,q_0)(n_2,q_0), q) = (a,q_0) = (a,q)$.  Thus there are at most $\tau(q_0)^2$ possible choices for $(n_1,q), (n_2,q)$, and by the triangle inequality it suffices to show that
\begin{equation}\label{fnn}
|\E_{n_1,n_2 \in \Z} f(n_1,n_2) 1_{n_1 n_2 = a\ (q)} e_q(u_1n_1 + u_2n_2)| \leq q_0^{3/2} \tau(q) q^{-3/2} (u_1,u_2,q)^{1/2}.
\end{equation}
under the additional hypothesis that $f$ is supported in the region where $(n_1,q_0) = q_1$, $(n_2,q_0)=q_2$ for some factors $q_1,q_2$ of $q_0$ with
\begin{equation}\label{q1q2}
 (q_1 q_2, q) = (a,q).
\end{equation}
In particular, if we write $q' \coloneqq \frac{q}{(a,q)}$, then the quantity $w = \frac{q_1 q_2}{(a,q)}$ is a primitive element of $\Z / q'
\Z$. Making the change of variables $n_1 = q_1 n'_1$, $n_2 = q_2 n'_2$, we can now rewrite the left-hand side of~\eqref{fnn} as
$$ \frac{1}{q_1 q_2} |\E_{n'_1,n'_2 \in \Z} f(q_1 n'_1,q_2 n'_2) 1_{w n'_1 n'_2 = \frac{a}{(a,q)}\ (q')} e_q(u_1q_1 n_1 + u_2q_2 n_2)|.$$
By Fourier inversion and the Plancherel formula we have
$$f(q_1 n'_1,q_2 n'_2) = \sum_{k_1 \in \Z/(q_0/q_1)\Z} \sum_{k_2 \in \Z/(q_0/q_2)\Z} c_{k_1,k_2} e_{q_0}( k_1 q_1 n'_1 + k_2 q_2 n'_2 )$$
where the coefficients $c_{k_1,k_2}$ obey the bound
$$ \sum_{k_1 \in \Z/(q_0/q_1)\Z} \sum_{k_2 \in \Z/(q_0/q_2)\Z} |c_{k_1,k_2}|^2 \leq 1,$$
and hence by Cauchy--Schwarz
$$ \sum_{k_1 \in \Z/(q_0/q_1)\Z} \sum_{k_2 \in \Z/(q_0/q_2)\Z} |c_{k_1,k_2}| \leq \frac{q_0}{q_1^{1/2} q_2^{1/2}}.$$
Thus by the triangle inequality and pigeonhole principle, we can bound the left-hand side of~\eqref{fnn} by
$$ \frac{q_0}{q^{3/2}_1 q^{3/2}_2} \left|\E_{n'_1,n'_2 \in \Z} 1_{w n'_1 n'_2 = \frac{a}{(a,q)}\ (q')} e_q\left(\left(u_1+k_1 \frac{q}{q_0}\right)q_1 n_1 + \left(u_2+k_2 \frac{q}{q_0}\right)q_2 n_2\right)\right|$$
for some integers $k_1,k_2$.  Since $1_{w n'_1 n'_2 = \frac{a}{(a,q)}\ (q')}$ is a $q'$-periodic function of $n'_1,n'_2$, this expression vanishes unless the integers $(u_1+k_1 \frac{q}{q_0})q_1$, $(u_2+k_2 \frac{q}{q_0})q_2$ are divisible by $q/q' = (a,q)$.  Since $w$ and $\frac{a}{(a,q)}$ are both primitive in $\Z/q'\Z$, we may then apply~\eqref{estermann} and bound the left-hand side of~\eqref{fnn} by
$$ \frac{q_0}{q^{3/2}_1 q^{3/2}_2} (q')^{-3/2} \left(\left(u_1+k_1 \frac{q}{q_0}\right)\frac{q_1}{(a,q)}, \left(u_2+k_2 \frac{q}{q_0}\right) \frac{q_2}{(a,q)}, q'\right)^{1/2}$$
which we can rewrite as
$$ \frac{q_0}{q^{3/2}_1 q^{3/2}_2} (a,q) q^{-3/2} d^{1/2}$$
where
$$d \coloneqq \left( \left(u_1+k_1 \frac{q}{q_0}\right)q_1, \left(u_2+k_2 \frac{q}{q_0}\right) q_2, q\right).$$
By construction, we have
$$\left(u_1+k_1 \frac{q}{q_0}\right)q_1 =  \left(u_2+k_2 \frac{q}{q_0}\right) q_2 = q = 0\ (d)$$
and hence by taking suitable linear combinations
$$ u_1 q_0 q_1 q_2 = u_2 q_0 q_1 q_2 = q q_0 q_1 q_2 = 0\ (d).$$
We conclude in particular that $d | q_0 q_1 q_2 (u_1,u_2,q)$, and the claim follows (noting from~\eqref{q1q2} that $(a,q) \leq q_1 q_2$).
\end{proof}

From Lemma~\ref{fourier-expand} and the Fourier inversion formula one can express the periodic function $f(n_1,n_2) 1_{n_1 n_2 = a\ (q)}$ as a linear combination of Fourier phases $e_q(u_1n_1 + u_2n_2)$ with good bounds on the Fourier coefficients.  However, the contribution of those terms in which one of $u_1,u_2$ is divisible by $q$ (or by a very large factor of $q$) will be inconvenient to handle.  We therefore perform the following substitute expansion:

\begin{lemma}[Modified Fourier expansion]\label{mfe} Let $q$ be a natural number, and let $a$ be an integer.  Let $q_0$ be a factor of $q$ such that $(a,q) | q_0$.  Let $f \colon \Z^2 \to \C$ be a $1$-bounded function with period $q_0$.  Define $q'_0 \coloneqq (q_0 (a,q), q)$. Then we have
\begin{align*} f(n_1,n_2) 1_{n_1 n_2 = a\ (q)} &= \frac{\alpha q'_0}{q} f(n_1,n_2) 1_{n_1 n_2 = a\ (q'_0)} 1_{(n_1 n_2,q)=(a,q)}\\
&\quad + \sum_{u_1,u_2 \in \Z/q\Z: \frac{q}{q_0} \nmid u_1, u_2} c_{u_1,u_2} e_q(u_1 n_1 + u_2 n_2) 
\end{align*}
where $\alpha$ is the quantity
$$ \alpha \coloneqq \prod_{p | \frac{q}{q'_0}; p \nmid \frac{q'_0}{(a,q)}} \frac{p}{p-1}$$
and the coefficients $c_{u_1,u_2}$ obey the bounds
$$
|c_{u_1,u_2}| \leq 2 \tau(q_0)^2 q_0^{3/2} \tau(q) q^{-3/2} (u_1,u_2,q)^{1/2}.$$
\end{lemma}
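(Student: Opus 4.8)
The plan is to compare $g(n_1,n_2) := f(n_1,n_2)\,1_{n_1 n_2 = a\ (q)}$ with the proposed main term
\[
M(n_1,n_2) := \frac{\alpha q'_0}{q}\, f(n_1,n_2)\, 1_{n_1 n_2 = a\ (q'_0)}\, 1_{(n_1 n_2, q) = (a,q)},
\]
both regarded as $q$-periodic functions, and to take $c_{u_1,u_2}$ to be the Fourier coefficients of $g-M$ on $(\Z/q\Z)^2$. Two claims then suffice: (A) $\widehat{(g-M)}(u_1,u_2)=0$ whenever $\tfrac{q}{q_0}\mid u_1$ and $\tfrac{q}{q_0}\mid u_2$, so that the Fourier expansion of $g-M$ involves only the frequencies appearing in the sum; and (B) $|\widehat M(u_1,u_2)| \le \tau(q_0)^2 q_0^{3/2}\tau(q) q^{-3/2}(u_1,u_2,q)^{1/2}$. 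Granting these, Lemma~\ref{fourier-expand} applied to $g$ bounds $|\widehat g(u_1,u_2)|$ by the same quantity (the bound being insensitive to the sign of the $u_i$), and the triangle inequality applied to $c_{u_1,u_2}=\widehat g(u_1,u_2)-\widehat M(u_1,u_2)$ yields the claimed estimate, the factor of $2$ arising here.

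For (A), let $P$ be the averaging operator $Ph(n_1,n_2) := \E_{m_1,m_2 \in \Z/(q/q_0)\Z} h(n_1 + q_0 m_1, n_2 + q_0 m_2)$; a direct computation on Fourier phases shows that $P$ is the projection onto the space of $q_0$-periodic functions, with kernel spanned by those $e_q(u_1 n_1 + u_2 n_2)$ for which $\tfrac{q}{q_0}\nmid u_1$ or $\tfrac{q}{q_0}\nmid u_2$. Hence (A) is equivalent to $Pg=PM$, and since $f$ is $q_0$-periodic it factors through $P$, so after cancelling it we are reduced to an identity between two counts of solutions of $x_1 x_2 \equiv a$ in $\Z/q\Z$ with $x_1,x_2$ in prescribed residue classes modulo $q_0$. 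By the Chinese remainder theorem this splits over the prime powers $p^{e_p}\| q$, and each local factor is an elementary count: using $(a,q)\mid q_0$ one checks that the congruence $n_1 n_2 \equiv a\ (q'_0)$ already forces $(n_1 n_2, q)=(a,q)$ at every prime dividing $q_0$, so the gcd condition is active only at the primes in $S := \{p\mid q: p\nmid q_0\}$ (for which one automatically has $p\nmid a$ and $v_p(q'_0)=0$), where it amounts to $p\nmid n_1 n_2$; a direct computation of the two local densities at such a $p$ shows that the resulting local correction factor is exactly $\tfrac{p}{p-1}$, which matches the corresponding factor of $\alpha$. In particular $\alpha=\prod_{p\in S}\tfrac{p}{p-1}$, and the identity from (A) also exhibits $M$ in the form $M = \tfrac{\alpha q'_0}{q}\,f(n_1,n_2)\,1_{n_1 n_2 = a\ (q'_0)}\,1_{(n_1 n_2, s)=1}$ with $s := \prod_{p\in S} p$.

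For (B), note that $q'_0$ and $s$ are coprime and $M$ is $q'_0 s$-periodic with $q'_0 s\mid q$, so $\widehat M$ is supported on $(u_1,u_2)$ with $q/(q'_0 s)$ dividing both $u_1$ and $u_2$, and under the isomorphism $\Z/q'_0 s\Z \cong \Z/q'_0\Z \times \Z/s\Z$ the coefficient $\widehat M(u_1,u_2)$ factors (up to harmless unit factors) as $\tfrac{\alpha q'_0}{q}$ times a Fourier coefficient of $f(n_1,n_2)1_{n_1 n_2 = a\ (q'_0)}$ modulo $q'_0$ times a Fourier coefficient of $1_{(n_1,s)=1}1_{(n_2,s)=1}$ modulo $s$. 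The first of these is controlled by Lemma~\ref{fourier-expand} (whose hypothesis $(a,q'_0)\mid q_0$ again follows from $(a,q)\mid q_0$), and the second is a product of two Ramanujan sums $c_s(\cdot)/s$, bounded via $|c_s(v)|\le (v,s)$. Multiplying the bounds and using $(u_1,u_2,q)=\tfrac{q}{q'_0 s}\,(v'_1,v'_2,q'_0)\,(v''_1,v''_2,s)$ (where $(v'_i,v''_i)$ is the CRT image of the reduced frequency $u_i/(q/(q'_0 s))$), the estimate (B) collapses to the arithmetic inequality $\alpha\,\tau(q'_0)\le\tau(q)$; this holds because $\alpha\le 2^{|S|}$ (as $\tfrac{p}{p-1}\le 2$) while $\tau(q)/\tau(q'_0)\ge\prod_{p\in S}(e_p+1)\ge 2^{|S|}$, the primes of $S$ being precisely those with $v_p(q'_0)=0<e_p$.

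The step I expect to require the most care is (B): one must track precisely how the prefactor $\tfrac{\alpha q'_0}{q}$, the coprimality-sieve factor $1_{(n_1 n_2, s)=1}$ (whose Ramanujan-sum Fourier coefficients carry substantial low-frequency mass, which is exactly why the prefactor must be $\tfrac{\alpha q'_0}{q}$ rather than $\tfrac{q'_0}{q}$), and the gcd factors $(\cdot,\cdot,\cdot)^{1/2}$ recombine so as not to exceed the constant allotted by Lemma~\ref{fourier-expand}. With (A) and (B) established, the lemma follows immediately.
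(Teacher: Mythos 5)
Your claim (A) is stated with the wrong quantifier, and the projection argument you give cannot reach the statement the lemma actually requires. The sum in the lemma runs over $(u_1,u_2)$ with $\tfrac{q}{q_0}\nmid u_1$ \emph{and} $\tfrac{q}{q_0}\nmid u_2$, so for the stated decomposition one needs $\widehat{(g-M)}(u_1,u_2)=0$ on the \emph{complement} of that set, i.e.\ whenever $\tfrac{q}{q_0}\mid u_1$ \emph{or} $\tfrac{q}{q_0}\mid u_2$. You state (A) with ``and'', and indeed your operator $P$, which averages over $q_0$-translates in \emph{both} variables, projects exactly onto the phases $e_q(u_1n_1+u_2n_2)$ with $\tfrac{q}{q_0}\mid u_1$ and $\tfrac{q}{q_0}\mid u_2$; hence $Pg=PM$ only controls those. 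The mixed frequencies --- where exactly one of $u_1,u_2$ is divisible by $\tfrac{q}{q_0}$ --- are neither indexed by the sum nor covered by your vanishing, so the proposed decomposition of $g$ is not established.

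To close the gap one must prove the one-sided vanishing. Your framework could in principle be repaired using the partial projections $P_1,P_2$ (averaging $q_0$-translates in $n_1$ only, resp.\ $n_2$ only) and showing $P_ig=P_iM$, but after cancelling $f$ these no longer reduce to the symmetric two-variable count you pass to CRT; one is left with a family of one-variable congruence counts depending on the other, frozen, variable, requiring a separate local analysis that the proposal does not supply. The paper avoids this via an intermediate device: it compares $1_{n_1n_2=a\ (q)}$ with $1_{n_1n_2=a'\ (q)}$ over the class ${\mathcal A}=\{a'\equiv a\ (q'_0):(a',q)=(a,q)\}$, and when $\tfrac{q}{q_0}\mid u_2$ performs the substitution $n_2\mapsto wn_2$ with $w\equiv a'/a\ (q)$ normalized so that $w\equiv 1\ (q_0)$ --- this fixes both the $q_0$-periodic weight $f$ and the phase $e_q(u_2n_2)$ while swapping the two indicators, giving the one-sided vanishing directly --- and only then identifies $\E_{a'\in{\mathcal A}}1_{n=a'\ (q)}$ with the main term $M$ by a prime-by-prime computation. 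That change-of-residue-class step is the essential idea your proposal is missing; the estimate (B) is a secondary matter and moot until (A) in the correct ``or'' form is in place.
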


\begin{proof}  We may assume without loss of generality that $1 \leq a \leq q$. Let ${\mathcal A}$ denote the collection of those $1 \leq a' \leq q$ such that $a' = a\ (q'_0)$ and $(a',q) = (a,q)$.  From Lemma~\ref{fourier-expand} we see that the Fourier coefficient
\begin{equation}\label{fc}
 \E_{n_1,n_2 \in \Z} f(n_1,n_2) (1_{n_1 n_2 = a\ (q)}-1_{n_1 n_2 = a'\ (q)}) e_q(u_1n_1 + u_2n_2)
\end{equation}
for $u_1,u_2 \in \Z/q\Z$ is bounded in magnitude by $2\tau(q_0)^2 q_0^{3/2} \tau(q) q^{-3/2} (u_1,u_2,q)^{1/2}$ for any $a' \in {\mathcal A}$.  We claim furthermore that this Fourier coefficient vanishes whenever one of $u_1,u_2$ is divisible by $q/q_0$.  Indeed, suppose for instance that $u_2$ is divisible by $q/q_0$, so that $n_2 \mapsto f(n_1 n_2) e_q(u_1 n_1 + u_2 n_2)$ is $q_0$-periodic for any $n_1$.  To obtain the vanishing of~\eqref{fc}, it suffices to show that
\begin{equation}\label{nq}
 \sum_{n_2 \in \Z/q\Z: n_2 = a_2\ (q_0)} 1_{n_1 n_2 = a\ (q)} = \sum_{n_2 \in \Z/q\Z: n_2 = a_2\ (q_0)} 1_{n_1 n_2 = a'\ (q)} 
\end{equation}
for any integers $n_1,a_2$.  But since $(a',q)=(a,q)$, we can write $a' = w a\ (q)$ for some primitive $w \in \Z/q\Z$; since $a'=a\ ((q_0 (a,q),q))$
we have $w = 1\ ((q_0,q/(a,q)))$; as we have the freedom to adjust $w$ by an arbitrary multiple of $q/(a,q)$ we may in fact assume that $w = 1\ (q_0)$.  The claim~\eqref{nq} then follows after applying the change of variables $n_2 \mapsto w n_2$ on the right-hand side. We argue similarly if $u_1$ is divisible by $q/q_0$ instead of $u_2$.

Averaging in $a'$, we conclude that the Fourier coefficient
$$ \E_{n_1,n_2 \in \Z} f(n_1,n_2) (1_{n_1 n_2 = a\ (q)}-\E_{a' \in {\mathcal A}} 1_{n_1 n_2 = a'\ (q)}) e_q(u_1n_1 + u_2n_2)$$
is bounded in magnitude by $2\tau(q_0)^2 q_0^{3/2} \tau(q) q^{-3/2} (u_1,u_2,q)^{1/2}$, and vanishes whenever $u_1$ or $u_2$ vanish in $\Z/q\Z$.  To establish the claim, it now suffices by the Fourier inversion formula to obtain the identity
$$
  \E_{a' \in {\mathcal A}} 1_{n = a'\ (q)} = \frac{\alpha q'_0}{q} 1_{n = a\ (q'_0)} 1_{(n,q)=(a,q)}$$
for any integer $n$.  By the Chinese remainder theorem, it suffices to establish this identity at each prime $p$, that is to say it suffices to show that
$$ \E_{a' = a\ (p^{j_0}): (a',p^j) = (a,p^j)} 1_{n=a'\ (p^j)} = \frac{\alpha_p p^{j_0}}{p^j} 1_{n=a\ (p^{j_0})} 1_{(n,p^j) = (a,p^j)}$$
whenever $p$ is a prime, $0 \leq j_0 \leq j$, and $a$ is an integer with $(a,p^j) | p^{j_0}$, where $\alpha_p \coloneqq \frac{p}{p-1}$ if $j > j_0$ and $(a,p^j)=p^{j_0}$, and $\alpha_p = 1$ otherwise.  But this follows by a direct case analysis:
\begin{itemize}
\item If $j=j_0$, then the conditions $(a',p^j) = (a,p^j)$ and $(n,p^j)=(a,p^j)$ are redundant, $\alpha_p=1$, $a'$ is restricted to a single residue class mod $p^j$, and both sides are equal to $1_{n = a\ (p^{j_0})}$.
\item If $j<j_0$ and $(a,p^j)<p^{j_0}$, then the conditions $(a',p^j) = (a,p^j)$ and $(n,p^j)=(a,p^j)$ are redundant, $\alpha_p=1$, $a'$ is restricted to $p^{j-j_0}$ residue classes mod $p^j$, and both sides are equal to $\frac{1}{p^{j-j_0}} 1_{n=a\ (p^{j_0})}$.
\item If $j<j_0$ and $(a,p^j)=p^{j_0}$, then $\alpha_p = \frac{p}{p-1}$, $a'$ is restricted to $\frac{p-1}{p} p^{j-j_0}$ residue classes mod $p^j$, and both sides are equal to $\frac{p}{p-1} \frac{1}{p^{j-j_0}} 1_{n=a\ (p^{j_0})}$.
\end{itemize}
\end{proof}

\section{First step: replacing the Liouville function with a Siegel model}\label{sec:siegel-liouville}

We now execute step (i) of the strategy outlined in the introduction.  From~\eqref{fmulti} we have the splitting
$$ \lambda = \lambda_{(\leq R)} * \lambda_{(>R)}.$$
In view of Corollary~\ref{excep-bound}, we expect $\lambda$ to resemble the exceptional character $\chi$ on the rough numbers $\N_{(>R)}$.  It is therefore natural to introduce the Siegel approximant
\begin{equation}\label{lsi-def}
 \lambda_\Siegel \coloneqq \lambda_{(\leq R)} * \chi_{(>R)},
\end{equation}
thus $\lambda_\Siegel$ is the completely multiplicative function that agrees with $\lambda$ for primes $p \leq R$ and agrees with $\chi$ for primes $p > R$.  Similar approximants were also introduced in~\cite{german-katai}, \cite{chinis}.
Clearly $\lambda$, $\lambda_\Siegel$ are both bounded by $1$:
\begin{equation}\label{ls-bound}
|\lambda(n)|, |\lambda_\Siegel(n)| \leq 1.
\end{equation}

The error between $\lambda$ and $\lambda_\Siegel$ can be controlled by exceptional primes and by rough numbers:

\begin{lemma}[Error bound between $\lambda$ and $\lambda_\Siegel$]\label{lam-err}  For any natural number $n \leq 2x$, one has
\begin{equation}\label{la}
 \lambda(n) - \lambda_\Siegel(n) \ll \sum_{p^*|n, R < p^* \leq x/R} 1 + \sum_{d \leq 2R: d|n} 1_{(\geq x/R)}(n/d).
\end{equation}
\end{lemma}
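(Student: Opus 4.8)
The plan is to exploit that $\lambda$ and $\lambda_\Siegel$ share the same $R$-smooth part by construction, so that their difference is governed entirely by the behaviour of $\lambda$ versus $\chi$ on the $R$-rough part of $n$, where the only obstruction to equality is divisibility by an exceptional prime.

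First I would combine the definition \eqref{lsi-def} with the factorisation $\lambda=\lambda_{(\le R)}*\lambda_{(>R)}$ from \eqref{fmulti} and bilinearity of Dirichlet convolution to write
\[
\lambda-\lambda_\Siegel=\lambda_{(\le R)}*\bigl(\lambda_{(>R)}-\chi_{(>R)}\bigr).
\]
Evaluating at $n$ and using that $\lambda_{(\le R)}$ is supported on $\N_{(\le R)}$ while $\lambda_{(>R)}-\chi_{(>R)}$ is supported on $\N_{(>R)}$, the convolution sum collapses to the single term coming from the smooth--rough factorisation $n=n_{(\le R)}n_{(>R)}$, so that
\[
\lambda(n)-\lambda_\Siegel(n)=\lambda(n_{(\le R)})\bigl(\lambda(n_{(>R)})-\chi(n_{(>R)})\bigr).
\]
Since $|\lambda(n_{(\le R)})|=1$, it suffices to bound $|\lambda(m)-\chi(m)|$ for $m:=n_{(>R)}$, which is an $R$-rough divisor of $n$ with $m\le 2x$.

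The remaining step is an elementary dichotomy on the prime factorisation of $m$. If $\chi(p)=-1$ for every prime $p\mid m$, then complete multiplicativity of $\chi$ gives $\chi(m)=(-1)^{\Omega(m)}=\lambda(m)$, where $\Omega(m)$ denotes the number of prime factors of $m$ counted with multiplicity, so the left-hand side of \eqref{la} vanishes. Otherwise $m$ has an exceptional prime factor $p^*$, which is necessarily $>R$ since $m$ is $R$-rough. If some exceptional prime factor $p^*$ of $m$ lies in $(R,x/R]$, then $p^*\mid n$ and $p^*$ is counted by the first sum on the right of \eqref{la}. If instead every exceptional prime factor of $m$ exceeds $x/R$, fix one such $p^*$ and set $d:=n/p^*$; then $d\mid n$, $d\le 2x/p^*<2R$, and $n/d=p^*>x/R$, so $d$ is counted by the second sum on the right of \eqref{la}. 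In either case the right-hand side of \eqref{la} is $\ge 1$, while the left-hand side is $\le 2$ by \eqref{ls-bound}, and the claimed bound follows.

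I do not expect a genuine obstacle here; the only point that needs care is the bookkeeping separating exceptional prime factors of moderate size (absorbed by the first sum) from a single very large exceptional prime factor (absorbed via its complementary divisor $n/p^*\le 2R$ in the second sum), together with the observation that the $R$-smooth part of $n$ never contributes to the discrepancy precisely because $\lambda_\Siegel$ was built to agree with $\lambda$ on primes up to $R$.
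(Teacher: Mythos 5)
Your proof is correct and follows essentially the same route as the paper's: a trichotomy on whether $n$ has an exceptional prime factor exceeding $R$, and if so whether some such factor lies in $(R,x/R]$ or all such factors exceed $x/R$ (in which case the cofactor $n/p^*$ is small). The only cosmetic difference is your preamble via the convolution identity $\lambda-\lambda_\Siegel=\lambda_{(\le R)}*(\lambda_{(>R)}-\chi_{(>R)})$, which the paper replaces by the direct observation that two completely multiplicative functions agreeing on every prime dividing $n$ agree at $n$; the content is identical.
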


\begin{proof}  If $n$ is not divisible by any exceptional prime $p^*>R$, then we have $\lambda(n) = \lambda_\Siegel(n)$ since $\lambda, \lambda_\Siegel$ agree on every prime dividing $n$.  Clearly~\eqref{la} holds in this case.  If $n$ is divisible by an exceptional $R < p^* \leq x/R$, then the first term on the right-hand side of~\eqref{la} is at least one, and the claim~\eqref{la} then follows from~\eqref{ls-bound}.

The only remaining case is if $n$ is divisible by an exceptional prime $p^* \geq x/R$, so $n = p^* d$ for some $d \leq 2R$. Since $n/d = p^* \geq x/R$ is prime, the second term on the right-hand side of~\eqref{la} is at least one, and the claim~\eqref{la} again follows from~\eqref{ls-bound}.
\end{proof}

In this section we establish

\begin{proposition}[Replacing $\lambda$ with a Siegel model]\label{step-1}  We have
\begin{align*}
& \E_{n \leq x} \Lambda(n+h_1) \cdots \Lambda(n+h_k) \lambda(n+h'_1) \cdots \lambda(n+h'_\ell) \\
&\quad \approx \E_{n \leq x} \Lambda(n+h_1) \cdots \Lambda(n+h_k) \lambda_\Siegel(n+h'_1) \cdots \lambda_\Siegel(n+h'_{\ell}).
\end{align*}
\end{proposition}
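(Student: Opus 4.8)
The plan is to telescope in the $\lambda$-factors, apply the pointwise bound of Lemma~\ref{lam-err}, and then dispose of the two resulting error terms by elementary upper-bound sieving (Lemma~\ref{sub}) combined with the exceptional-prime estimates of Corollary~\ref{excep-bound}.

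\emph{Reductions.} Since $|\lambda|,|\lambda_\Siegel|\le 1$ by \eqref{ls-bound}, writing $\prod_{i=1}^\ell \lambda(n+h'_i) - \prod_{i=1}^\ell \lambda_\Siegel(n+h'_i)$ as a telescoping sum, using the triangle inequality, and recalling $\prod_j \Lambda(n+h_j)\ge 0$, it suffices to show that for each fixed $j_0\in\{1,\dots,\ell\}$ one has $\E_{n\le x} \Lambda(n+h_1)\cdots\Lambda(n+h_k)\,|\lambda-\lambda_\Siegel|(n+h'_{j_0}) \ll \log^{-1/(5\max(1,k))}\eta$, which is $\approx 0$ (cf.\ \eqref{lrn}), there being $O(1)$ values of $j_0$. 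As $\Lambda$ is supported on prime powers and the $n\le x$ with some $n+h_j$ a proper prime power number at most $O(x^{1/2})$, we may replace $\prod_j\Lambda(n+h_j)$ by $(\log 2x)^k\prod_{j\le k} 1_{n+h_j\text{ prime}}$ up to an error $\ll x^{-1/2}\log^{O(1)}x\approx 0$. Applying Lemma~\ref{lam-err} to $m:=n+h'_{j_0}\le 2x$, and noting that an $(x/R)$-rough number $\le 2x$ is either $1$ or a prime $\ge x/R$ (since $(x/R)^2>2x$ for $\eta$ large), its second term equals $1_{m\le 2R}+1_{P^+(m)\ge x/R}$, the $1_{m\le 2R}$ part contributing $\ll R\log^{O(1)}x/x\approx 0$. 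Thus it remains to bound
$$ \mathrm{(A)}:=(\log 2x)^k\!\!\sum_{R<p^*\le x/R}\!\!\E_{n\le x}\Big(\textstyle\prod_{j\le k} 1_{n+h_j\text{ prime}}\Big)1_{p^*\mid n+h'_{j_0}},\qquad \mathrm{(B)}:=(\log 2x)^k\,\E_{n\le x}\Big(\textstyle\prod_{j\le k} 1_{n+h_j\text{ prime}}\Big)1_{P^+(n+h'_{j_0})\ge x/R}.$$

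\emph{Term (A).} Fix an exceptional prime $p^*\in(R,x/R]$, write $n=-h'_{j_0}+p^*t$, and apply Lemma~\ref{sub} in the variable $t$ (of range $X=(x+h'_{j_0})/p^*$) to the $k$ pairwise distinct nonzero linear forms $p^*t+(h_j-h'_{j_0})$; this gives $\omega(q)=k$ for all large $q\ne p^*$, so by \eqref{mertens-3} and $\log X\ge\log R$ the count is $\ll (x/p^*)\log^{-k}R$, i.e.\ $\E_{n\le x}(\prod_{j\le k} 1_{n+h_j\text{ prime}})1_{p^*\mid n+h'_{j_0}}\ll 1/(p^*\log^k R)$. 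Summing over $p^*$ and using \eqref{R-def} (so $(\log 2x)^k/\log^k R\asymp\log^{k/(5\max(1,k))}\eta$, a fixed power of $\log\eta$) together with $\sum_{R<p^*\le x/R}1/p^*\ll\exp(-\sqrt{\log\eta}/2)$ from Corollary~\ref{excep-bound} (applicable since $R_0<R\le x/R\le x$ and $x$ lies in the admissible range), we get $\mathrm{(A)}\ll\log^{O(1)}\eta\cdot\exp(-\sqrt{\log\eta}/2)\ll_A\log^{-A}\eta$ for every $A>0$, hence $\approx 0$ (cf.\ \eqref{llog}). The point here is that the sieve converts the harmful $(\log 2x)^k$ into a mere power of $\log\eta$, which is then killed by the exponential decay.

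\emph{Term (B), and the main obstacle.} This is the delicate step. Reading Lemma~\ref{lam-err} literally as ``an exceptional prime $\ge x/R$ divides $n+h'_{j_0}$'' and expanding over that prime, then applying a sieve estimate separately for each modulus, accumulates a total sieve error (of order $R^{2k}$ after summing over all moduli up to $x/R$) that dwarfs the main term. One must instead keep the prime inside and fix the \emph{small} cofactor: a number $m\le 2x$ with $P^+(m)\ge x/R$ factors uniquely as $m=bp$ with $p=P^+(m)$ prime and $b=m/p\le 2R$, so $1_{P^+(m)\ge x/R}\le\sum_{b\le 2R}\sum_{p\ge x/R\text{ prime}}1_{m=bp}$. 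For fixed $b$, imposing $n+h'_{j_0}=bp$ forces $n=bp-h'_{j_0}$, and the remaining conditions become a sieve problem for the $k+1$ pairwise non-proportional linear forms $\{p\}\cup\{bp+(h_j-h'_{j_0})\}_{j\le k}$ in the single variable $p$ (with $x/R\le p\le(x+h'_{j_0})/b$); Lemma~\ref{sub}, using $\log((x+h'_{j_0})/b)\asymp\log x$ since $\log(2R)=o(\log x)$, yields a count $\ll\frac{x/b}{\log^{k+1}x}\,\mathfrak{S}_b$ with singular series $\mathfrak{S}_b\ll\prod_{q\mid b}(1-1/q)^{-k}$. Hence $\mathrm{(B)}\ll\frac{(\log 2x)^k}{\log^{k+1}x}\sum_{b\le 2R}\frac{\mathfrak{S}_b}{b}$, and since $\sum_{b\le 2R}\frac{\mathfrak{S}_b}{b}\le\prod_{q\le 2R}\bigl(1+\frac{(1-1/q)^{-k}}{q-1}\bigr)\ll\log R$ by a standard Euler product estimate, we conclude $\mathrm{(B)}\ll\frac{\log R}{\log x}=\log^{-1/(5\max(1,k))}\eta\approx 0$ by \eqref{R-def}, \eqref{lrn}. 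The crucial gain is that the extra $(k{+}1)$st form — the primality of $p$ itself, which is genuinely present in Lemma~\ref{lam-err} — supplies precisely the factor $\log^{-1}x$ which, against the convergent sum $\sum_b\mathfrak{S}_b/b\ll\log R$, produces the saving $\log R/\log x$. Combining (A) and (B) with the reductions proves the proposition.
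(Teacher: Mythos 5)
Your argument is correct and follows essentially the same route as the paper: Lemma~\ref{lam-err} to split the error into an exceptional-prime piece and a large-prime-factor piece, then Lemma~\ref{sub} combined with Corollary~\ref{excep-bound} and \eqref{llog} for the former, and Lemma~\ref{sub} with the extra ($(k+1)$-st) linear form supplied by the primality (resp.\ roughness) of the large prime factor for the latter, producing the decisive saving $\log R / \log x \approx 0$ via \eqref{lrn}. The only cosmetic differences are that the paper bounds $\Lambda$ by $\log(2x)\cdot 1_{(\geq \sqrt{2x})}$ instead of by $\log(2x)\cdot 1_{\text{prime}}$, and keeps the sum over all small divisors $d \leq 2R$ rather than explicitly parameterizing by the largest prime factor --- both interchangeable here.
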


From~\eqref{ls-bound} and the triangle inequality, it suffices to show that
$$ \E_{n \leq x} \Lambda(n+h_1) \cdots \Lambda(n+h_k) |\lambda(n+h'_{j'}) - \lambda_\Siegel(n+h'_{j'})|
\approx 0$$
for each $1 \leq j' \leq \ell$.  Applying Lemma~\ref{lam-err}, it suffices to show the bounds
\begin{equation}\label{p-1}
\sum_{R < p^* \leq x/R} \E_{n \leq x} \Lambda(n+h_1) \cdots \Lambda(n+h_k) 1_{p^*|n+h'_{j'}}
\approx 0
\end{equation}
and
\begin{equation}\label{p-2}
\sum_{d \leq 2R}  \E_{n \leq x} \Lambda(n+h_1) \cdots \Lambda(n+h_k) 1_{d|n+h'_{j'}} 1_{(\geq x/R)}\left(\frac{n+h'_{j'}}{d}\right)
\approx 0.
\end{equation}

We begin with~\eqref{p-1}.  For $n \leq x$ and $1 \leq j \leq k$, the quantity $\Lambda(n+h_{j})$ is bounded by $\log(2x) 1_{(\geq \sqrt{2x})}(n+h_j)$ unless we are in the exceptional case where $n+h_{j}$ is of the form $p^i$ for some prime $p < \sqrt{2x}$ (cf. the sieve of Eratosthenes).  The contribution of such exceptional cases can easily be shown to be $\approx 0$, so it suffices to show that
$$ 
(\log^k x) 
\sum_{R < p^* \leq x/R} \E_{n \leq x} 1_{(\geq \sqrt{2x})}(n+h_1) \cdots 1_{(\geq \sqrt{2x})}(n+h_k) 1_{p^*|n+h'_{j'}}
\approx 0.$$
Let $p^*$ be as in the above sum.  Changing variables, we have
\begin{align*} &\E_{n \leq x} 1_{(\geq \sqrt{2x})}(n+h_1) \cdots 1_{(\geq \sqrt{2x})}(n+h_k) 1_{p^*|n+h'_{j'}}\\
&\ll \frac{1}{p^*} \E_{n \leq 2x/p^*} 1_{(\geq \sqrt{2x})}(p^* n+h_1-h'_{j'}) \cdots 1_{(\geq \sqrt{2x})}(p^* n+h_k-h'_{j'}).
\end{align*}
Let $C_0$ be a sufficiently large constant depending on $h_1,\dots,h_k,h'_{j'}$.  Then for any prime $C_0 < p \leq \sqrt{2x}$ other than $p^*$, the support set of $1_{(\geq \sqrt{2x})}(p^* n+h_1-h'_{j'}) \cdots 1_{(\geq \sqrt{2x})}(p^* n+h_k-h'_{j'})$ excludes $k$ residue classes modulo $p$.  Thus by\footnote{One could also use Lemma~\ref{nusieve} here instead if desired to give a comparable estimate.} Lemma~\ref{sub} we have
$$ \E_{n \leq x} 1_{(\geq \sqrt{2x})}(n+h_1) \cdots 1_{(\geq \sqrt{2x})}(n+h_k) 1_{p|n+h'_{j'}} \ll \frac{1}{p^*} \prod_{C_0 < p \leq \min( 2x/p^*, \sqrt{2x}): p \neq p^*} \left( 1 - \frac{k}{p}\right)$$
and hence by Mertens' theorem~\eqref{mertens-3} and the bound $p_* \leq x/R$
$$ (\log^k x) \E_{n \leq x} 1_{(\geq \sqrt{2x})}(n+h_1) \cdots 1_{(\geq \sqrt{2x})}(n+h_k) 1_{p^*|n+h'_{j'}} \ll \frac{\log^k_R x}{p^*}.$$
The claim~\eqref{p-1} now follows from Corollary~\ref{excep-bound} and~\eqref{llog}.

Now we prove~\eqref{p-2}.  Arguing as in the proof of~\eqref{p-1}, it suffices to show that
\begin{equation}\label{p-2-alt}
(\log^k x) \sum_{d \leq 2R}  \E_{n \leq x} 1_{(\geq \sqrt{2x})}(n+h_1) \cdots 1_{(\geq \sqrt{2x})}(n+h_k) 1_{d|n+h'_{j'}} 1_{(\geq x/R)}\left(\frac{n+h'_{j'}}{d}\right)
\approx 0.
\end{equation}
For $d \leq 2R$, we have after change of variables that
\begin{align*}
&\E_{n \leq x} 1_{(\geq \sqrt{2x})}(n+h_1) \cdots 1_{(\geq \sqrt{2x})}(n+h_k) 1_{d|n+h'_{j'}} 1_{(\geq x/R)}\left(\frac{n+h'_{j'}}{d}\right)\\
&\quad \ll \frac{1}{d}
\E_{n \leq 2x/d} 1_{(\geq \sqrt{2x})}(dn+h_1-h'_{j'}) \cdots 1_{(\geq \sqrt{2x})}(dn+h_k-h'_{j'}) 1_{(\geq x/R)}(n).
\end{align*}
With $C_0$ as before, we see that for any prime $C_0 \leq p < \sqrt{2x}$ not dividing $d$ we are excluding $k+1$ residue classes modulo $p$ (since $h'_{j'}$ is distinct from $h_1,\dots,h_k$), hence by Lemma~\ref{sub}
$$
\E_{n \leq x} 1_{(\geq \sqrt{2x})}(n+h_1) \cdots 1_{(\geq \sqrt{2x})}(n+h_k) 1_{d|n+h'_{j'}} 1_{(\geq x/R)}\left(\frac{n+h'_{j'}}{d}\right)
\ll \frac{1}{d} \prod_{C_0 \leq p < \sqrt{2x}: p \nmid d} \left(1 - \frac{k+1}{p}\right)$$
and hence by Mertens' theorem~\eqref{mertens-3}
\begin{align*}
&(\log^k x) \E_{n \leq x} 1_{(\geq \sqrt{2x})}(n+h_1) \cdots 1_{(\geq \sqrt{2x})}(n+h_k) 1_{d|n+h'_{j'}} 1_{(\geq x/R)}\left(\frac{n+h'_{j'}}{d}\right)\\
&\ll \frac{1}{d\log x} \prod_{p|d} \left(1 + O\left(\frac{1}{p}\right)\right).
\end{align*}
By~\eqref{euler-stop} we may therefore bound the left-hand side of~\eqref{p-2-alt} by
$$ \frac{1}{\log x} \prod_{p \leq 2R} \left(1 + \frac{1}{p} + O\left(\frac{1}{p^2}\right)\right).$$
By~\eqref{mertens-3} this latter quantity is $O( \log_x R )$, and the claim follows from~\eqref{lrn}.

\section{Second step: replacing the von Mangoldt function with a Siegel model}\label{sec:siegel-vonmangoldt}

We now execute step (ii) of the strategy outlined in the introduction.  In order to (mostly) restrict to rough numbers, we will insert the Selberg sieve $\nu$ defined in~\eqref{nuz-def}.
Namely, observe that
$$ \Lambda - \Lambda \nu$$
is supported on prime powers $p^j$ with $p \leq R$ and can be crudely bounded by $O(\log^2 x)$ on such powers.  Since the number of such powers of size $O(x)$ is crudely bounded by $O( R \log x )$, one easily sees from the triangle inequality that
\begin{equation}\label{insert-nu}
\begin{split}
& \E_{n \leq x} \Lambda(n+h_1) \cdots \Lambda(n+h_k) \lambda_\Siegel(n+h'_1) \cdots \lambda_\Siegel(n+h'_\ell)\\
&\quad \approx
\E_{n \leq x} \Lambda \nu(n+h_1) \cdots \Lambda \nu(n+h_k) \lambda_\Siegel(n+h'_1) \cdots \lambda_\Siegel(n+h'_\ell) 
\end{split}
\end{equation}
(with plenty of room to spare in the error term).  Next, we expand
$$ \Lambda \nu = (\mu * \log) \nu.$$
Since $\mu$ is expected to be close to $\chi$ on rough numbers, and the Selberg sieve $\nu$ is mostly restricted to such numbers, it is then natural to introduce the Siegel approximant
$$ \Lambda_\Siegel \coloneqq (\chi * \log) \nu.$$

From the triangle inequality we have the crude bounds
\begin{equation}\label{tri}
 \Lambda \nu(n), \Lambda_\Siegel(n) \ll \tau \nu(n) \log x
\end{equation}
We also have the following bound for the error between $\Lambda \nu$ and $\Lambda_\Siegel$:

\begin{lemma}  For $n \leq 2x$, we have the bounds
\begin{equation}\label{lasn}
 \Lambda \nu(n) - \Lambda_\Siegel(n) \ll E(n) + F(n) + G(n)
\end{equation}
where
\begin{align}
E(n) &\coloneqq \left( \sum_{R_0 < p^* \leq \sqrt{2x}} 1_{p^*|n} + \sum_{R_0 < p \leq \sqrt{2x}} 1_{p^2|n}\right) \tau \nu(n) \log x\label{E-def}\\
F(n) &\coloneqq \left(\sum_{1 < d \leq D:d|n} \tau_{(\leq R_0)}(d)^{O(1)}\right) \nu(n) \log x.\label{F-def}\\
G(n) &\coloneqq \sum_{\sqrt{2x} < p^* \leq 2x/R^{1/2}} 1_{p^*|n} \Lambda \nu(n/p^*).\label{G-def}
\end{align}
\end{lemma}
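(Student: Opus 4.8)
The plan is to start from the exact identity $\Lambda\nu(n)-\Lambda_\Siegel(n)=\nu(n)\bigl((\mu-\chi)*\log\bigr)(n)$ and to rewrite the convolution so that the three error terms become visible. Since $\log=1*\Lambda$, since $\mu*1$ is the Dirichlet identity, and since $\chi*1=1*\chi$, one has $(\mu-\chi)*\log=\Lambda-(1*\chi)*\Lambda$; separating the term $d=n$ in $(1*\chi)*\Lambda=\sum_{p^j\mid n}(\log p)\,(1*\chi)(n/p^j)$ then gives
\[
\Lambda\nu(n)-\Lambda_\Siegel(n)=-\nu(n)\sum_{\substack{p^j\mid n\\ p^j<n}}(\log p)\,(1*\chi)(n/p^j).
\]
The key structural fact is that $1*\chi$ is non-negative and multiplicative, with $(1*\chi)(q^i)=1_{2\mid i}$ at every non-exceptional prime $q$ and $(1*\chi)(m)\le\tau(m)$ in general; hence a summand vanishes unless the non-exceptional part of $m\coloneqq n/p^j$ is a perfect square, and in particular unless $m>1$ is divisible by the square of a prime or by an exceptional prime. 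This is the mechanism forcing $\Lambda\nu$ and $\Lambda_\Siegel$ to agree on ``generic'' $n$.

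Next I would split into cases according to the size of a bad prime of $n$ responsible for a surviving summand (an exceptional prime dividing $n$, or a prime whose square divides $n$), exploiting the constraints from $n\le 2x$: the $\sqrt{2x}$-rough part of $n$ is a single prime, and $p^2\mid n$ forces $p\le\sqrt{2x}$. If $n$ has a bad prime in the medium range $(R_0,\sqrt{2x}]$, the bracket defining $E(n)$ is $\ge 1$, so $E(n)\ge\tau\nu(n)\log x$ already dominates $|\Lambda\nu(n)-\Lambda_\Siegel(n)|$ via the crude majorant \eqref{tri}. If $n$ has a bad prime $p^*>\sqrt{2x}$ — necessarily exceptional and unique, so $n=p^*n'$ with $(p^*,n')=1$ and $n'<\sqrt{2x}$ — then $p^*>R$ gives $\nu(n)=\nu(n/p^*)$, and a short computation (using $(1*\chi)(p^*)\le 2$) collapses the part of the sum involving $p^*$ to $O\bigl((1+\chi(p^*))\Lambda\nu(n/p^*)\bigr)$ plus terms supported on $n'$ alone; moreover $p^*>2x/R^{1/2}$ contributes nothing because then $n/p^*<R^{1/2}$ and $\nu(n/p^*)=1_{n/p^*=1}$, so this piece is $O(G(n))$.

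The remaining and most delicate case is when every bad prime of $n$ lies at or below $R_0$ (if $n$ has no bad prime at all the sum is empty and the difference is $0$). Then each surviving $m=n/p^j$ is in fact $R_0$-smooth: a non-exceptional prime $P>R_0$ dividing $m$ would, as the non-exceptional part of $m$ must be a square, satisfy $P^2\mid m\mid n$, hence $P\le\sqrt{2x}$, producing a square bad prime in $(R_0,\sqrt{2x}]$ contrary to assumption, and no exceptional prime $>R_0$ divides $n$ either; so $m\mid n_{(\le R_0)}$ and $(1*\chi)(m)\le\tau(m)\le\tau(n_{(\le R_0)})$ (with a harmless extra $\tau_3$-type loss when many $(p,j)$ contribute, which happens only if $n$ itself is $R_0$-smooth). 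Landreau's inequality (Lemma~\ref{landreau-lemma}, applied to $n_{(\le R_0)}$ with $\eps$ small enough that $n_{(\le R_0)}^{\eps}\le D$) then dominates $\tau(n_{(\le R_0)})^{O(1)}$ by $\sum_{1<d\le D:\,d\mid n}\tau_{(\le R_0)}(d)^{O(1)}$, which is exactly the bracket defining $F(n)$; the factor $\nu(n)\log x$ is supplied by $\nu(n)$ and $\log p\le\log(2x)$, and choosing the universal exponent in $F$ large enough absorbs all the $O(1)$ powers. I expect this last case to be the main obstacle: a naive estimate here loses a factor of $\tau(n)$, and it is precisely the vanishing of $1*\chi$ away from near-squares that confines the dangerous divisors to the $R_0$-smooth part of $n$ and makes the Type~I structure of $F$ available. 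A secondary nuisance is the bookkeeping for $\nu$ — that it ignores prime factors exceeding $R$ and is trivial below $R^{1/2}$ — which is what lets one extract $G(n)$ cleanly and discard very large prime factors.
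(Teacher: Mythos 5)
Your proof takes essentially the same route as the paper's: both rewrite $\chi*\log = (1*\chi)*\Lambda$ so that the error $\Lambda - \chi*\log$ is supported via the positivity and ``near-square-times-exceptional'' support of $1*\chi$, both split into the same three cases according to whether the offending prime of $n$ lies in $(R_0,\sqrt{2x}]$, exceeds $\sqrt{2x}$, or is at most $R_0$ (producing $E$, $G$, $F$ respectively), and both feed the $R_0$-smooth part of $n$ into Landreau's inequality to obtain the Type~I shape of $F$. Your explicit verification that every surviving cofactor $m=n/p^j$ must divide $n_{(\le R_0)}$ in the small-bad-primes case is the same step the paper performs, merely spelled out a bit more fully, and your treatment of the large exceptional prime (factoring out $\nu(n)=\nu(n/p^*)$ and noting $\nu$ is trivial below $R^{1/2}$) matches the paper's derivation of the $G$ term.
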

 
\begin{proof} If $n$ is divisible by an exceptional $R_0 < p^* \leq \sqrt{2x}$, then $E \gg \tau(n) \nu(n) \log x$, and~\eqref{lasn} then follows from~\eqref{tri} and~\eqref{E-def}.  Similarly if $n$ is divisible by the square of a prime $p > R_0$ (which must then necessarily be at most $\sqrt{2x}$).

Next, suppose that $n$ is not divisible by any exceptional prime $p^* > R_0$, nor by any square $p^2$ of a prime $p>R_0$.  We write
$$ \chi * \log = (1*\chi)*\mu*\log = (1*\chi)*\Lambda.$$
Note that $1*\chi(d)$ is only non-zero when $d$ is the product of exceptional primes times a perfect square, so if $d|n$ and $n$ is as above then $d$ must be the product of some primes less than or equal to $R_0$.  Also $\sum_{d|n} \Lambda(d) = \log n$.  Thus, for $n$ as above, we have
$$ \chi*\log(n) \leq \tau(n_{(\leq R_0)}) \log n,$$
where we recall that $n_{(\leq R_0)}$ is the largest factor of $n$ that is the product of primes less than or equal to $R_0$.  Applying~\eqref{landreau-special} (with $n$ replaced by $n_{(\leq R_0)}$), we have
$$ \tau(n_{(\leq R_0)}) \ll \sum_{1 < d \leq D:d|n} \tau_{(\leq R_0)}(d)^{O(1)}$$
and the claim~\eqref{lasn} now follows in this case from~\eqref{F-def}.

We are left with the case where $n$ is divisible by an exceptional prime $p^* > \sqrt{2x}$. Then $n = dp^*$ for some $d < \sqrt{2x}$.
The only factors of $n$ that are less than or equal to $R$ are factors of $d$ as well, thus $\nu(n) = \nu(d)$.  Since
$$ \chi * \log(n) = \chi * 1 * \mu * \log(n) = \chi * 1 * \Lambda(n)$$
and $\chi*1$ vanishes at all factors of $n$ except for $1$ and $p^*$, we have
$$ \chi * \log(n) = \Lambda(n) + (1+\chi(p^*)) \Lambda(d)$$
and thus
$$  \Lambda(n) - \Lambda_\Siegel(n)  \ll \Lambda \nu(d).$$
If $p^* > 2x/R^{1/2}$ then $d \leq R^{1/2}$, and hence $\nu(d)$ vanishes by~\eqref{nuz-def}.  The claim~\eqref{lasn} now follows in this case from~\eqref{G-def}.

\end{proof}

Now we can prove

\begin{proposition}[Replacing $\Lambda$ with a Siegel model]\label{step-2}  We have
\begin{align*}
&\E_{n \leq x} \Lambda(n+h_1) \cdots \Lambda(n+h_k) \lambda_\Siegel(n+h'_1) \cdots \lambda_\Siegel(n+h'_\ell)\\ 
&\quad \approx \E_{n \leq x} \Lambda_\Siegel(n+h_1) \cdots \Lambda_\Siegel(n+h_k) \lambda_\Siegel(n+h'_1) \cdots \lambda_\Siegel(n+h'_{\ell}).
\end{align*}
\end{proposition}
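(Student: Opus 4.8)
The plan is to deduce Proposition \ref{step-2} from the pointwise bound \eqref{lasn} together with \eqref{insert-nu}. By \eqref{insert-nu} it is enough to prove the statement with every $\Lambda$ replaced by $\Lambda\nu$. Writing $e_j \coloneqq (\Lambda\nu-\Lambda_\Siegel)(n+h_j)$, I would use the expansion
\[
\prod_{j=1}^{k}\Lambda\nu(n+h_j)-\prod_{j=1}^{k}\Lambda_\Siegel(n+h_j)=\sum_{\emptyset\neq S\subseteq\{1,\dots,k\}}(-1)^{|S|+1}\Big(\prod_{j\in S}e_j\Big)\prod_{j\notin S}\Lambda\nu(n+h_j),
\]
which has the advantage that the non-error factors are copies of $\Lambda\nu$ rather than $\Lambda_\Siegel$; since $\Lambda\leq\log$ we have $\Lambda\nu(m)\ll\nu(m)\log x$ for $m\leq 2x$, so those factors cost only powers of $\nu$ and $\log x$ and never a divisor function. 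Using $|\lambda_\Siegel|\leq1$ from \eqref{ls-bound} to drop the Liouville factors and bounding each $e_j$ by $E(n+h_j)+F(n+h_j)+G(n+h_j)$ via \eqref{lasn}, and recalling $k\leq2$ (so $|S|\in\{1,2\}$), the proposition reduces to showing that
\[
(\log x)^{k-1}\,\E_{n\leq x}\Big(\prod_{j\neq j_0}\nu(n+h_j)\Big)\big(E(n+h_{j_0})+F(n+h_{j_0})+G(n+h_{j_0})\big)\approx0
\]
for each $1\leq j_0\leq k$, together with the analogous (and smaller) contributions of the $|S|=2$ terms, in which two shifted factors are error terms and none is a bare $\nu$.

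For all of $E$, $F$, $G$ the mechanism is the same: apply Landreau's inequality (Lemma \ref{landreau-lemma}(ii)) to replace every remaining factor of $\tau$ by a Type I divisor sum up to $D$, expand, estimate the ensuing average of $\prod_j\nu(n+h_j)$ against the congruence conditions by Lemma \ref{nusieve}, and then sum over the divisors using Euler products and Mertens' theorem in the forms \eqref{mertens}, \eqref{mertens-alt}, \eqref{mertens-3}, with the $\sigma$-integral handled by taking $A$ large. The powers of $\log x$ generated by the weights are harmless because each ratio $\log x/\log R$ equals $\log^{\frac{1}{5\max(1,k)}}\eta$ by \eqref{R-def}, so they turn into bounded powers of $\log\eta$; the $R^{2k}/x$ error terms from Lemma \ref{nusieve} are negligible as in \eqref{rdk0}.

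The $E$ and $G$ terms come with an exceptional-prime saving. For $E$ I would peel off the exceptional $p^*\in(R_0,\sqrt{2x}]$ (or the square $p^2$ with $p>R_0$) dividing $n+h_{j_0}$ in \eqref{E-def}, fold it into the congruence, and run the above machinery to reach a bound $\ll\log^{O(1)}\eta$ times $\sum_{R_0<p^*\leq\sqrt{2x}}1/p^*$ (respectively $\sum_{p>R_0}1/p^2\ll 1/R_0$). For $G$ in \eqref{G-def} I would write $n+h_{j_0}=p^*m$ with $m<\sqrt{2x}$, bound the inner factor $\Lambda\nu(m)\ll\nu(m)\log x$, and reach $\ll\log^{O(1)}\eta$ times $\sum_{\sqrt{2x}<p^*\leq 2x/R^{1/2}}1/p^*$. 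By Corollary \ref{excep-bound} these exceptional-prime sums are all $\ll\exp(-\sqrt{\log\eta}/2)$, so the corresponding contributions are $\ll_A\log^{-A}\eta$ by \eqref{eta-decay} (or \eqref{llog}), hence $\approx0$.

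The $F$ term is the tightest case, and it is where the concentration of $\nu$ on almost-primes is essential. Expanding $F(n+h_{j_0})$ via \eqref{F-def} produces a divisor $d$ with $1<d\leq D$, $d\mid n+h_{j_0}$, that is $R_0$-smooth and hence has a prime factor $p\leq R_0$; feeding this into Lemma \ref{nusieve} with $d$ in the role of $d_{j_0}$ and all other $d_j=1$, the associated factor $\min(\sigma\log_R p,1)$ is at most $\sigma\log_R R_0=\sigma\log^{\frac{1}{5\max(1,k)}-\frac12}\eta$, which is a \emph{negative} power of $\log\eta$ because $R_0=x^{1/\sqrt{\log\eta}}$ is much smaller than $R$ in log-scale. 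Summing the $d$-sum against this gain (the $\sigma$-integral converging for $A$ large without further logarithmic loss) yields a bound $\ll\frac{(\log x)^{k}}{\log^{k}R}\log_R R_0=\log^{\frac{k+1}{5\max(1,k)}-\frac12}\eta$, and since $\frac{k+1}{5\max(1,k)}-\frac12\leq-\frac1{10\max(1,k)}$ for $k=1,2$, this is $\approx0$. Assembling the $E$, $F$, $G$ bounds (the $|S|=2$ terms being dominated by any one such saving) finishes the proof. The main delicacy I anticipate is precisely the bookkeeping that keeps the bare $F$ contribution within the budget $\log^{-\frac1{10\max(1,k)}}\eta$: this is what forces the inclusion--exclusion step above and the choice to bound $\Lambda\nu$ by $\nu\log x$ rather than $\tau\nu\log x$, so that divisor-function losses occur only alongside the exponential exceptional-prime savings in $E$ and $G$.
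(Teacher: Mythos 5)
Your proof is correct and follows essentially the same approach as the paper: reduce to $\Lambda\nu$ via \eqref{insert-nu}, invoke the pointwise decomposition $\Lambda\nu-\Lambda_\Siegel=O(E+F+G)$ from \eqref{lasn}, and then treat $E$, $G$ with the exceptional-prime savings of Corollary~\ref{excep-bound} and $F$ with the smallness of $\log_R R_0$, everything funnelled through Landreau, Lemma~\ref{nusieve}, and Euler products exactly as in \eqref{q-1}--\eqref{q-2}. Your exact inclusion--exclusion is a cosmetic variant of the paper's hierarchical expansion (the paper bounds the non-designated factors successively by $\Lambda\nu+E+F+G$, then $\Lambda\nu+F+G$, then $\Lambda\nu+G$, which folds your $|S|\geq 2$ cases into the $|S|=1$ sums), and your rationale that the inclusion--exclusion is \emph{forced} to avoid divisor losses in the $F$ term is not quite right: in \eqref{q-3} the paper dominates the non-designated factors by $\tau_{(\leq R_0)}$-weighted Type I sums, and these cost only $(1+\sigma\log_R R_0)^{O(1)}$ rather than a bare $\log^{O(1)}\eta$, so the telescoping expansion would also have kept you in budget. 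A point worth recording: the divisor $d$ in $F$ is not declared $R_0$-smooth in \eqref{F-def} as written (the sum runs over all $1<d\leq D$ dividing $n$), but that restriction is what the derivation in the proof of \eqref{lasn} actually supplies (Landreau applied to $n_{(\leq R_0)}$) and is essential: it is what makes the Euler product live over $p\leq R_0$, and without it the primes $R_0<p\leq D$ would contribute an uncontrolled $\log^{O(1)}\eta$ that swamps the $\log_R R_0$ saving. Finally, your invocation of $k\leq 2$ is unnecessary in this step --- the inequality $\frac{k+1}{5\max(1,k)}-\frac12\leq-\frac1{10\max(1,k)}$ already holds for all $k\geq 1$ --- and indeed the hypothesis $k\leq 2$ is only genuinely used at Proposition~\ref{step-4}.
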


In view of~\eqref{insert-nu} it suffices to show that
\begin{align*}
&\E_{n \leq x} \Lambda \nu (n+h_1) \cdots \Lambda \nu(n+h_k) \lambda_\Siegel(n+h'_1) \cdots \lambda_\Siegel(n+h'_\ell) \\
&\quad \approx \E_{n \leq x} \Lambda_\Siegel(n+h_1) \cdots \Lambda_\Siegel(n+h_k) \lambda_\Siegel(n+h'_1) \cdots \lambda_\Siegel(n+h'_\ell).
\end{align*}
By~\eqref{ls-bound} and the triangle inequality it suffices to show that
$$ \E_{n \leq x} |\Lambda \nu (n+h_1) \cdots \Lambda \nu(n+h_k) - \Lambda_\Siegel(n+h_1) \cdots \Lambda_\Siegel(n+h_k)| \approx 0.$$
From~\eqref{lasn} we have
$$ \Lambda_\Siegel(n+h_j) = \Lambda \nu(n+h_j) + O(E(n+h_j) + F(n+h_j) + G(n+h_j))$$
for $j=1,\dots,k$.   Multiplying these estimates together, we conclude that
\begin{align*}
\Lambda_\Siegel(n+h_1) \cdots \Lambda_\Siegel(n+h_k) &= \Lambda \nu (n+h_1) \cdots \Lambda \nu(n+h_k) \\
&\quad +O\left( \sum_{j=1}^k E(n+h_j) \prod_{1 \leq j' \leq k: j' \neq j} (\Lambda \nu + E + F + G)(n+h_{j'})\right) \\
&\quad + O\left(\sum_{j=1}^k F(n+h_j) \prod_{1 \leq j' \leq k: j' \neq j} (\Lambda \nu + F + G)(n+h_{j'})\right) \\
&\quad + O\left(\sum_{j=1}^k G(n+h_j) \prod_{1 \leq j' \leq k: j' \neq j} (\Lambda \nu + G)(n+h_{j'})\right).
\end{align*}
By the triangle inequality and relabeling, it thus suffices to establish the bounds
\begin{equation}\label{q-1}
 \E_{n \leq x} E(n+h_1) \prod_{j=2}^k (\Lambda \nu+E+F+G)(n+h_j) \approx 0
\end{equation}
and
\begin{equation}\label{q-3}
 \E_{n \leq x} F(n+h_1) \prod_{j=2}^k (\Lambda \nu+F+G)(n+h_j) \approx 0
\end{equation}
and
\begin{equation}\label{q-2}
 \E_{n \leq x} G(n+h_1) \prod_{j=2}^k (\Lambda \nu+G)(n+h_j) \approx 0.
\end{equation}

We begin with~\eqref{q-2}, which is a variant of~\eqref{p-2}.  We can bound $(\Lambda \nu + G)(n+h_j)$ by $O( \log(2x) 1_{(\geq R^{1/4})}(n+h_j) )$,
and we also have the bound
$$G(n+h_1) \ll \log(2x) \sum_{\sqrt{2x} < p^* \leq 2x/R^{1/2}} 1_{p^*|n+h_1} 1_{(\geq R^{1/4})}(n+h_1)$$
unless $n+h_j$ is of the form $p^m$ for some $p<R^{1/4}$ and $m \geq 1$, or $p' p^m$ for some $p<R^{1/4}$, $m \geq 2$, and $\sqrt{2x} \leq p' \leq 2x/R^{1/2}$.  

There are only $O( x \log x/R^{1/4} )$ such exceptional values of $n$ and their contribution is easily seen to be negligible using~\eqref{divisor-bound}.  Thus it will suffice to show that
\begin{equation}\label{q-2-alt}
 (\log^{k} x) \sum_{\sqrt{2x} < p^* \leq 2x/R^{1/2}}  \E_{n \leq x} 1_{p^*|n+h_1} \prod_{j=1}^k 1_{(\geq R^{1/4})}(n+h_j) \approx 0.
\end{equation}
Making the change of variables $n = p^* n' - h_1$ and using Lemma~\ref{sub} and Mertens' theorem~\eqref{mertens-3}, we see that
$$
\E_{n \leq x} 1_{p^*|n+h_1} \prod_{j=1}^k 1_{(\geq R^{1/4})}(n+h_j)  \ll \frac{1}{p^* \log^k R}.$$
The claim~\eqref{q-2-alt} now follows from Corollary~\ref{excep-bound} and~\eqref{R-def}.

Now we turn to~\eqref{q-1}.  Observe using~\eqref{landreau-special} that
$$ (\Lambda \nu+E+F+G)(n+h_j) \ll \left(\sum_{d_j \leq D: d_j|n+h_j} \tau(d_j)^{O(1)}\right) \nu(n+h_j) \log x$$
and so we can bound the left-hand side of~\eqref{q-1} by
\begin{equation}\label{rsu}
\ll (\log^k x) \left( \sum_{R_0 < p^* \leq \sqrt{2x}} a_{p^*} + \sum_{R_0 < p_0 \leq \sqrt{2x}} a_{p_0^2}\right)
\end{equation}
where
$$ a_d \coloneqq \sum_{d_1,\dots,d_k \leq D} \tau(d_1 \cdots d_k)^{O(1)} \E_{n \leq x} 1_{d|n+h_1} \prod_{j=1}^k 1_{d_j|n+h_j} \nu(n+h_j).$$
Let $d$ be equal to $p_0$ or $p_0^2$ for some prime $p_0 \leq \sqrt{2x}$.  If $d > \sqrt{2x}$, then $d = p_0^2$ for some $p_0 \gg x^{1/4}$.  From~\eqref{divisor-bound} one has the crude bound $a_d \ll_\eps \frac{x^\eps}{d} = \frac{x^\eps}{p_0^2}$ in this case, which certainly gives a negligible contribution.  Hence we may assume that $d \leq \sqrt{2x}$.
 
Applying Lemma~\ref{nusieve} and~\eqref{divisor-bound}, we have
$$ a_d \ll_{A} \frac{1}{\log^k R} \int_1^\infty f(\sigma) \frac{d\sigma}{\sigma^A} + \frac{D^{k+1} R^{2k}}{x}$$
for any $A>0$, where 
$$ f(\sigma) \coloneqq \sum_{d_1,\dots,d_k \leq \sqrt{2x}}  \frac{\tau([d,d_1] d_2 \cdots d_k)^{O(1)}}{[d,d_1] d_2 \cdots d_k} \left(\prod_{p|d d_1 \cdots d_k} \min(\sigma \log_R p,1)\right).$$
Using Euler products~\eqref{euler-stop} we can bound 
$$f(\sigma) \leq \prod_{p \leq \sqrt{2x}} E_p(\sigma)$$ 
where
$$ E_p(\sigma) \coloneqq \sum_{d_1,\dots,d_k \in \N_{(p)}}  \frac{\tau([d_{(p)},d_1] d_2 \cdots d_k)^{O(1)}}{[d_{(p)},d_1] d_2 \cdots d_k} \min(\sigma \log_R p,1)^{1_{p|d d_1 \cdots d_k} }.$$

If $p \neq p_0$, then $d_{(p)}=1$, and we can calculate
$$ E_p(\sigma) \leq 1 + O\left( \frac{\min(\sigma \log_R p,1)}{p} \right).$$
From~\eqref{mertens-alt} we then have
$$ \prod_{p \leq \sqrt{2x}} E_p(\sigma) \ll (1 + \sigma \log_R \sqrt{2x})^{O(1)} E_{p_0}(\sigma) \ll (\log_R^{O(1)} x) \sigma^{O(1)} E_{p_0}(\sigma).$$
Also, we have the crude bound
$$ E_{p_0}(\sigma) \ll \frac{\tau(d)^{O(1)}}{d}.$$
Putting all these estimates together, and choosing $A$ large enough, we conclude that
$$ a_d \ll \tau(d)^{O(1)}\frac{\log_R^{O(1)} x}{d \log^k R} + \frac{D^{k+1} R^{2k}}{x}.$$
Inserting this into~\eqref{rsu} and using Corollary~\ref{excep-bound}, \eqref{llog}, \eqref{rdk0}, we obtain the claim~\eqref{q-1}.

Finally, we establish~\eqref{q-3}. Observe from~\eqref{F-def}, \eqref{G-def} that
$$ 
(\Lambda \nu + F + G)(n+h_{j'}) \ll \left(\sum_{d_j \leq D:d_j|n+h_j} \tau_{(\leq R_0)}(d_j)^{O(1)}\right) \nu(n+h_j) \log x
$$
and so it suffices to show that
$$ (\log^k x) \sum_{d_1,\dots,d_k \leq D: d_1 > 1} \left(\prod_{j=1}^k \tau_{(\leq R_0)}(d_j)^{O(1)}\right)
\E_{n \leq x} \prod_{j=1}^k 1_{d_j|n+h_j} \nu(n+h_j) \approx 0.$$
Applying Lemma~\ref{nusieve} and~\eqref{divisor-bound}, we may estimate the left-hand side as
\begin{align*}
&\ll_{A} (\log_R^k x) \sum_{d_1,\dots,d_k \leq D: d_1 > 1} \frac{\prod_{j=1}^k \tau_{(\leq R_0)}(d_j)^{O(1)}}{d_1 \cdots d_k} \int_1^\infty \prod_{p|d_1 \cdots d_k} \min(\sigma \log_R p,1)\ \frac{d\sigma}{\sigma^A} \\
&\quad + \frac{R^{2k} D^{k+1} (\log^k x)}{x}
\end{align*}
for any $A>0$.  The second term is $\approx 0$ by~\eqref{rdk0}.  Replacing the condition $d_1>1$ by $(d_1,\dots,d_k) \neq (1,\dots,1)$, removing the constraints $d_1,\dots,d_k \leq D$, and factoring the Euler product using~\eqref{euler-stop}, the first term can be bounded by
\begin{equation}\label{lrx}
 (\log_R^k x) \int_1^\infty (\prod_{p \leq R_0} \tilde E_p(\sigma) - 1) \frac{d\sigma}{\sigma^A}, 
\end{equation}
where
$$ \tilde E_p(\sigma) \coloneqq \sum_{d_1,\dots,d_k \in \mathbb{N}_{(p)} \rangle}
\frac{\prod_{j=1}^k \tau(d_j)^{O(1)}}{d_1 \cdots d_k} \min(\sigma \log_R p,1)^{1_{p|d_1 \cdots d_k}}.$$
Direct calculation gives
$$ \tilde E_p(\sigma) = 1 + O\left( \frac{\min(\sigma \log_R p, 1)}{p} \right).$$
From~\eqref{mertens-alt} we have
$$ \prod_{p \leq R_0} \tilde E_p(\sigma) \leq (1 + \sigma \log_R R_0)^{O(1)}$$
and hence
$$ \left(\prod_{p \leq R_0} \tilde E_p(\sigma)\right) - 1 \ll \sigma^{O(1)} \log_R R_0.$$
We can thus bound~\eqref{lrx} for $A$ large enough by
$$ \ll (\log_R^k x) \log_R R_0$$
which is $\approx 0$ by~\eqref{rxr0}.  This concludes the proof of~\eqref{q-2} and hence of Proposition~\ref{step-2}.

\section{Third step: replacing the Liouville Siegel model with a Type I approximant}\label{sec:typei-liouville}

We now execute step (iii) of the strategy outlined in the introduction.  From~\eqref{lsi-def}, \eqref{respect}, \eqref{fmulti} and M\"obius inversion we have
\begin{align*}
\lambda_\Siegel &= \lambda_{(\leq R)} * (\mu \chi)_{(\leq R)} * \chi_{(\leq R)} * \chi_{(>R)} \\
&= (\lambda * \mu \chi)_{(\leq R)} * \chi.
\end{align*}
We now split
\begin{equation}\label{ls-split}
\lambda_\Siegel = \lambda_\Siegel^\sharp + \lambda_\Siegel^\flat
\end{equation}
where $\lambda_\Siegel^\sharp$ is the Type I approximant
\begin{equation}\label{ls-def}
\lambda_\Siegel^\sharp \coloneqq (\lambda * \mu \chi)_{(\leq R)} \psi_{\leq D} * \chi
\end{equation}
and $\lambda_\Siegel^\flat$ is the error
\begin{equation}\label{ls-flat-def}
\lambda_\Siegel^\flat \coloneqq (\lambda * \mu \chi)_{(\leq R)} \psi_{> D} * \chi.
\end{equation}
Here $\psi_{\leq D}, \psi_{>D}$ are the smooth cutoffs defined in~\eqref{psia}, \eqref{psibig-def}.  In particular we see that $\lambda_\Siegel(n) = \lambda_\Siegel^\sharp(n)$ whenever $n_{(\leq R)} \leq \sqrt{D}$.  Since $\sqrt{D}$ is significantly larger than $R$, and $R$-smooth numbers become extremely sparse at scales much larger than $R$, we thus see that $\lambda_\Siegel$, $\lambda_\Siegel^\sharp$ agree with each other for ``typical'' $n$, and would thus be heuristically expected to be close to each other; in other words, $\lambda_\Siegel^\flat$ would be expected to be small on average.

Unfortunately, $\lambda_\Siegel^\sharp, \lambda_\Siegel^\flat$ are not bounded.  However, we can still obtain a reasonable bound on the latter quantity:

\begin{lemma}\label{lamlam}  For any $n\leq 2x$, we have
\begin{equation}\label{ga}
 \lambda_\Siegel^\flat(n) \ll H(n)
\end{equation}
where
\begin{equation}\label{geo-def}
 H(n) \coloneqq \sum_{d \leq D: d|n} \alpha(d)
\end{equation}
and $\alpha(d)$ are non-negative quantities obeying the bounds
\begin{equation}\label{dao}
\sum_{d \leq D} \tau(d)^A \frac{\alpha(d)}{d} \ll_A \exp\left(-\frac{1}{8}\log_R D\right)
\end{equation}
for any $A \geq 1$.
\end{lemma}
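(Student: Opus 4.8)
The plan is to produce the majorant by a direct, deliberately lossy expansion of $\lambda_\Siegel^\flat$ combined with a greedy factorisation that turns the resulting divisor sum into a Type I sum up to scale $D$, and then to verify the averaged bound \eqref{dao} by an Euler product calculation fed by the exceptional-prime estimates of Section~\ref{sec:consequence}.

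\textbf{Unfolding.} First I would use the definition \eqref{ls-flat-def}: writing $\lambda_\Siegel^\flat(n) = \sum_{me=n} (\lambda*\mu\chi)_{(\le R)}(m)\,\psi_{>D}(m)\,\chi(e)$ and using $|\chi|\le 1$ together with $\psi_{>D}\ge 0$, the triangle inequality gives $|\lambda_\Siegel^\flat(n)| \le \sum_{m\mid n}|(\lambda*\mu\chi)_{(\le R)}(m)|\,\psi_{>D}(m)$. Two structural facts then cut the support down. A one-line computation of $\lambda*\mu\chi$ at prime powers gives $(\lambda*\mu\chi)(p^j)=(-1)^j(1+\chi(p))$ for $j\ge 1$, which vanishes whenever $\chi(p)\ne -1$; hence $(\lambda*\mu\chi)_{(\le R)}$ is supported on $R$-smooth integers all of whose prime factors are exceptional, and on its support $|(\lambda*\mu\chi)(m)|\ll\tau(m)$. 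Also, by \eqref{psibig-def} and the support of $\psi$, $\psi_{>D}(m)=0$ unless $m\ge D^{1/2}$. So the sum is carried by $R$-smooth integers $m$ composed only of exceptional primes, with $D^{1/2}\le m\le 2x$.

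\textbf{Reduction to a Type I sum.} Since $R\le D^{1/2}$ for $\eta$ large (compare \eqref{R-def}, \eqref{D-def}), Landreau's Lemma~\ref{landreau-lemma}(i), applied with thresholds comparable to $R$ and $D$, factors any such $m$ as $m=m_1\cdots m_r$ with $r\le 1+\log_{D/R}(2x)=O(1)$, each $m_i\le D$ again an $R$-smooth product of exceptional primes, and with at least one factor lying in $[D^{1/2},D]$ (using $m\ge D^{1/2}$ together with the fact that all but one of the $m_i$ are $\gg D/R\ge D^{1/2}$). Bounding $\tau(m)\ll\prod_i\tau(m_i)$ and enlarging the range of summation then produces a pointwise bound $\lambda_\Siegel^\flat(n)\ll\sum_{d\mid n,\ d\le D}\alpha(d)$, where $\alpha$ is supported on $R$-smooth products of exceptional primes of size $\gg D^{1/2}$ (after, if necessary, replacing $D$ by an admissible power $D^{O(1)}$, which is harmless since $\eps_0$ may be taken small) and satisfies $\alpha(d)\ll\tau(d)^{O(1)}$; crucially, the smoothness of $\psi_{>D}$ should be retained here so that the effective size restriction on $d$ is a \emph{smoothed} one rather than a sharp cutoff. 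It then remains to prove \eqref{dao}. By the Euler product identities \eqref{euler-local}, \eqref{euler-stop}, and after inserting a Rankin weight $d^{\sigma}$ with $\sigma\asymp 1/\log R$ to cash in the lower bound $d\gg D^{1/2}$, one gets $\sum_{d\le D}\tau(d)^A\alpha(d)/d \ll D^{-\sigma/2}\prod_{p^*\le R}E_{p^*}$, the local factors $E_{p^*}=1+O_A(\cdot/p^*)$ running only over the exceptional primes; the factor $D^{-\sigma/2}=\exp(-\Theta(\log_R D))$ supplies the decay in \eqref{dao} (with room to spare, compare \eqref{eta-decay}), while the product over $p^*$ is kept $O_A(1)$ using Proposition~\ref{prop_Psum} and Corollary~\ref{excep-bound} to control the relevant sums over exceptional primes.

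\textbf{Main obstacle.} The step I expect to be genuinely delicate is keeping the exceptional-prime Euler product $\prod_{p^*\le R}E_{p^*}$ from swamping the Rankin gain. The exceptional primes are not negligible in bulk --- $\sum_{p^*\le R}1/p^*$ need not be small --- so one cannot afford to estimate $|(\lambda*\mu\chi)(m)|$ against a bare $\tau(m)$ and discard the smoothing; instead the coefficients $\alpha(d)$ must be organised so that the small exceptional primes enter with the $\min(\log_R p^*,1)$-type damping for which the estimates of Section~\ref{sec:consequence} (in particular the weighted sum in Corollary~\ref{excep-bound} and the ranges in \eqref{excep-1}, \eqref{excep-2}) are tailored, and only then do the $p^*$-sums, and hence the product, converge to $O_A(1)$. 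A secondary nuisance is the bookkeeping needed to collapse the $r$-fold factorisation from Landreau's lemma into a single divisor sum with $n$-independent coefficients supported in $[1,D^{O(1)}]$.
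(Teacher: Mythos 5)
Your unfolding $|\lambda_\Siegel^\flat(n)|\leq\sum_{m\mid n}|(\lambda*\mu\chi)_{(\leq R)}(m)|\,\psi_{>D}(m)$ is valid, and the observation that $(\lambda*\mu\chi)(p^j)=(-1)^j(1+\chi(p))$ vanishes off the exceptional primes is correct. However, from there your route diverges from the paper's in a way that leaves two genuine gaps. First, the pointwise majorization \eqref{ga} requires $n$-\emph{independent} coefficients $\alpha(d)$, but the Landreau-and-pigeonhole step you sketch does not produce them: after choosing a marker $m_{i_0}\in[D^{1/2},D]$ for each contributing $m$, the multiplicity of $m$'s mapping to a fixed $d=m_{i_0}$ is governed by the $R$-smooth exceptional divisor structure of $n/d$, which is $n$-dependent and cannot simply be absorbed into $\alpha(d)$. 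The paper sidesteps this precisely by Fourier-expanding $\psi_{>D}$ \emph{before} applying Landreau, so that the quantity being bounded, $\exp(O(\sum_p a_{t,n_{(p)}}))$, is fully multiplicative; Landreau then acts on the exponent additively and the product $\prod_{p\le R}(1+(\exp(O(a_{t,(n_i)_{(p)}}))-1))$ expands into a divisor sum whose coefficients depend only on $d$ and $t$. Without some analogue of this multiplicative packaging, the reduction to a Type I sum is not established.

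Second, the route through exceptional-prime rarity and Proposition~\ref{prop_Psum}/Corollary~\ref{excep-bound} does not close as described, and the paper's Remark after the lemma stresses that no exceptional-character input is used. Your Rankin weight gives a factor $\exp(-\Theta(\log_R D))$, but the Euler product over exceptional primes $p^*\le R$ contributes $\exp(O_A(\sum_{p^*\le R}1/p^*))$, and $\sum_{p^*\le R}1/p^*$ is of size $\asymp\log\log R$ (about half of small primes are exceptional, and Corollary~\ref{excep-bound} only controls $p^*\ge R_0$ or carries an extra $\log^{0.1}\eta$ in the weight), which is $\gg\log\eta+\log\log q_\chi$ and swamps the Rankin gain uniformly in $q_\chi$. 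The damping $\min(\log_R p,1)$ you correctly identify as necessary does not come from Corollary~\ref{excep-bound}: it comes from the smoothness of $\psi_{>D}$ (i.e.\ from Taylor-expanding $p^{(1+it)/\log R}-1$ after the Fourier step), and once you have it, it applies to \emph{all} primes $p\le R$ and is summed by Mertens alone, so the Euler product is $\ll(2+|t|)^{O(1)}$ with the decay $\exp(-\tfrac18\log_R D)$ coming entirely from the bound $f(t)\ll_A\exp(-\tfrac18\log_R D)(1+|t|)^{-A}$ on the Fourier coefficients of $\psi_{>D}$. In short, the exceptional-prime structure you are leaning on is not needed and does not by itself supply what the proof requires; the load-bearing ingredient is the Fourier expansion of the smooth cutoff, which your sketch defers rather than executes.
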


\begin{remark} Note that by~\eqref{eta-decay} we have
\begin{align}\label{logRD}
\exp(-\frac{1}{8}\log_R D)\ll_A \log^{-A} \eta
\end{align}
for any $A \geq 1$. We shall need~\eqref{logRD} later, but we stated Lemma~\ref{lamlam} in a stronger form to emphasize that it does not use any information on exceptional characters.   
\end{remark}

\begin{proof}  From~\eqref{ls-flat-def} and the triangle inequality we have
\begin{equation}\label{lambda-diff}
 \lambda_\Siegel^\flat(n) = \beta * \chi_{(>R)}(n) \ll |\beta| * 1_{(>R)}(n)
\end{equation}
where
\begin{equation}\label{beta-def}
 \beta  \coloneqq (\lambda * \mu \chi)_{(\leq R)} \psi_{> D} * \chi_{(\leq R)}.
\end{equation}
To control $\beta$ we perform a Fourier expansion on $\psi_{>D}$, which is the only term on the right-hand side of~\eqref{beta-def} which is not multiplicative.  Applying Fourier inversion~\eqref{fourier-inverse} to the function $g(u) \coloneqq e^{-u} (1 - \psi((\log_D R)u)$ and setting $u \coloneqq \log_{R} n$, we conclude the identity
\begin{equation}\label{psidn}
 \psi_{>D}(n) = \int_\R n^{\frac{1+it}{\log R}} f(t)\ dt
\end{equation}
where
$$ f(t) \coloneqq \frac{1}{2\pi} \int_0^\infty e^{-(1+it)x} (1 - \psi((\log_D R) x))\ dx.$$
From the triangle inequality we have
$$ f(t) \ll \exp(- \frac{1}{4} \log_{R} D ) \ll_A \log^{-A} \eta$$
for any $A>0$, while from repeated integration by parts we have 
$$ f(t) \ll_A \frac{1}{|t|^A}$$
for any positive integer $A$.  Combining the two bounds, we conclude that
\begin{equation}\label{varphi}
f(t) \ll_A \frac{\exp(-\frac{1}{8}\log_R D)}{(1+|t|)^A}
\end{equation}
for any $A>0$.

From~\eqref{beta-def}, \eqref{psidn} we have
$$ \beta(n) = \int_\R \beta_t(n) f(t)\ dt$$
where
\begin{equation}\label{betat-calc}
 \beta_t \coloneqq (\lambda * \mu \chi)_{(\leq R)} (\cdot)^{\frac{1+it}{\log R}} * \chi_{(\leq R)}.
\end{equation}
From~\eqref{lambda-diff} and the triangle inequality we then have
$$  \lambda_\Siegel^\flat(n) \leq \int_\R |\beta_t| * 1_{(>R)}(n) |f(t)|\ dt.$$

The function $|\beta_t|$ is multiplicative and supported on $\N_{(\leq R)}$, thus
$$ |\beta_t| * 1_{(>R)}(n) \ll \prod_{p \leq R} |\beta_t(n_{(p)})|.$$
From~\eqref{betat-calc} we see that 
$$ |\beta_t(p^j)| = 1$$
when $p \leq R$ and $\chi(p)=-1$ (because $\lambda * \mu \chi$ agrees with $\lambda * \mu \lambda = 1_{\{1\}}$ on $\N_{(p)}$), and
$$ |\beta_t(p^j)| = p^{\frac{j}{\log R}} = \exp( j \log_{R} p )$$
when $p \leq R$ and $\chi(p)=0$.  For $\chi(p)=+1$ the situation is more complicated: direct calculation gives
$$ \beta_t(p^j) = P_j(p^{\frac{1+it}{\log R}})$$
where $P_j$ is the polynomial
$$ P_j(z) \coloneqq 1 - 2z + 2z^2 - \cdots + (-1)^j 2 z^j.$$
Note that $|P_j(1)| \leq 1$ and $P'_j(z) \ll j^{O(1)} (1 + |z|^{j-1})$ for any $z$, hence by the fundamental theorem of calculus
$$ |P_j(z)| \leq 1 + O( |z-1| j^{O(1)} )$$
whenever $|z| \leq 1 + \frac{1}{j}$.  Also from the triangle inequality we have $|P_j(z)| \ll j |z|^j$ for $|z| \geq 1$.  We thus have
$$ |P_j(z)| \leq \min( 1+j^{O(1)} |z-1|,\exp( O(  j \log |z| +1+ \log j  ) ))$$
for $|z| \geq 1$.  Thus regardless of the value of $\chi(p)$, we have the upper bound
$$ |\beta_t(p^j)| \leq \exp( O( a_{t,p^j} ) )$$
for $p \leq R$, where $a_{t,p^j}$ is the quantity
$$ a_{t,p^j} \coloneqq \min( j^{C} (1+|t|) \log_R p, j \log_R p +1+ \log j )$$
for some large constant $C\geq 1$ and for all $j \geq 1$, with the convention $a_{t,1}=0$.  We conclude that
$$ |\beta_t| * 1_{(>R)}(n) \ll \exp\left( O\left( \sum_{p \leq R} a_{t,n_{(p)}} \right) \right).$$
To convert the right-hand side into Type I sums we apply Lemma~\ref{landreau-lemma}(i) to split
$$ n = n_{(> R)} n_1 \cdots n_m$$
where $m = O(1)$ and $n_1,\dots,n_m \leq D$ lie in $\N_{(\leq R)}$.  We then have
$$ n_{(p)} = (n_1)_{(p)} \cdots (n_m)_{(p)}$$
for all $p \leq R$, and hence
$$ a_{t,n_{(p)}} \ll \sum_{i=1}^m a_{t,(n_i)_{(p)}}.$$
Using the definition of $a_{t,p^j}$ and the inequality $(j_1+j_2)^{C}\ll_C j_1^C+j_2^C$, we conclude that
$$ |\beta_t| * 1_{(>R)}(n) \ll \exp\left( O\left( \sum_{i=1}^m \sum_{p \leq R} a_{t,(n_i)_{(p)}} \right) \right)$$
and hence (since $m=O(1)$), we have
\begin{align*}
|\beta_t| * 1_{(>R)}(n) &\ll \exp\left( O\left( \sum_{p \leq R} a_{t,(n_i)_{(p)}} \right) \right)\\
&=\prod_{p\leq R}\left(1+(\exp(O(a_{t,(n_i)_{(p)}}))-1)\right)
\end{align*}
for some $i=1,\dots,m$.  In particular, we see that
$$|\beta_t| * 1_{(>R)}(n) \ll \sum_{d \leq D: d|n} 1_{(\leq R)}(d) \prod_{p\leq R}\left(\exp( O(  a_{t,d_{(p)}} ) )-1\right).$$
We therefore obtain the bound~\eqref{ga} with
$$ \alpha(d) \coloneqq 1_{(\leq R)}(d) \int_\R \prod_{p\leq R}\left(\exp( O( a_{t,d_{(p)}} ) )-1\right) |f(t)|\ dt.$$
It remains to establish the bound~\eqref{dao}.  We use Fubini's theorem and Euler product expansion~\eqref{euler-stop} to bound
\begin{align*}
\sum_{d \leq D} \tau(d)^A \frac{\alpha(d)}{d} 
\ll \int_\R \prod_{p \leq R}\left(1+ \sum_{j=1}^\infty \frac{(1+j)^A (\exp(O(a_{t,p^j}))-1)}{p^j}\right) |f(t)|\ dt.
\end{align*}
For $j \geq 2$ we use the crude bound
$$ \exp( O(a_{t,p^j} )) \ll j^{O(1)} p^{O(j/\log R)}\ll e^{O(j)}$$
for $p\leq R$ to conclude that
$$ \sum_{j=2}^\infty \frac{(1+j)^A (\exp(O(a_{t,p^j}))-1)}{p^j} \ll_A \frac{1}{p^2}$$
for any $p \leq R$.  For $j=1$  we have
$$ \exp( O(a_{t,p^j}) ) \leq 1+O(\min((1+|t|)\log_R p,1)),$$
and thus using $1+x\leq e^x$ we obtain
\begin{align*} 1+\sum_{j=1}^\infty \frac{(1+j)^A (\exp(O(a_{t,p^j}))-1)}{p^j} &\leq 1+\frac{O_A(\min((1+|t|)\log_R p,1))}{p}+O_A\left(\frac{1}{p^2}\right)\\
&\leq \exp\left( O_A\left( \frac{\min((1+|t|)\log_R p,1)}{p} + \frac{1}{p^2} \right) \right).
\end{align*}
From Mertens' theorem~\eqref{mertens} we have
\begin{align*}\prod_{p\leq R}\exp\left(  O_A( \frac{\min((1+|t|)\log_R p,1))}{p} + \frac{1}{p^2} \right)&=\exp\left(O_A\left( \sum_{p \leq R}\left(\frac{\min((1+|t|)\log_R p,1)}{p} +\frac{1}{p^2}\right)\right)\right)\\
&\ll (2+|t|)^{O_A(1)},
\end{align*}
and hence
$$\sum_{d \leq D} \tau(d)^A \frac{\alpha(d)}{d} 
\ll_A \int_\R  (2+|t|)^{O_A(1)}|f(t)|\ dt.$$
Using~\eqref{varphi} we obtain~\eqref{dao} as required.
\end{proof}
	
Now we can prove

\begin{proposition}[Replacing $\lambda_\Siegel$ with a Type I approximant]\label{step-3}  We have
\begin{align*}
& \E_{n \leq x} \Lambda_\Siegel(n+h_1) \cdots \Lambda_\Siegel(n+h_k) \lambda_\Siegel(n+h'_1) \cdots \lambda_\Siegel(n+h'_\ell) \\
&\quad \approx \E_{n \leq x} \Lambda_\Siegel(n+h_1) \cdots \Lambda_\Siegel(n+h_k) \lambda_\Siegel^\sharp(n+h'_1) \cdots \lambda_\Siegel^\sharp(n+h'_\ell).
\end{align*}
\end{proposition}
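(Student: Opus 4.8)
The plan is to telescope the Liouville--Siegel factors one at a time. Write $a_i \coloneqq \lambda_\Siegel(n+h'_i)$ and $b_i \coloneqq \lambda_\Siegel^\sharp(n+h'_i)$, so that by \eqref{ls-split} we have $a_i - b_i = \lambda_\Siegel^\flat(n+h'_i)$. From the identity $\prod_i a_i - \prod_i b_i = \sum_j (\prod_{i<j} a_i)(a_j - b_j)(\prod_{i>j} b_i)$, the bounds $|\lambda_\Siegel| \le 1$ (by \eqref{ls-bound}) and $|\lambda_\Siegel^\flat| \ll H$ (by Lemma~\ref{lamlam}, applicable since all arguments lie in $[1,2x]$), and hence $|\lambda_\Siegel^\sharp| \le 1 + O(H)$, one obtains
\[
\Bigl| \prod_{i=1}^\ell \lambda_\Siegel(n+h'_i) - \prod_{i=1}^\ell \lambda_\Siegel^\sharp(n+h'_i) \Bigr| \ll \sum_{\emptyset \ne S \subseteq \{1,\dots,\ell\}} \prod_{i \in S} H(n+h'_i).
\]
Multiplying by $\prod_{j=1}^k \Lambda_\Siegel(n+h_j) \ll (\log^k x) \prod_{j=1}^k \tau\nu(n+h_j)$ (using \eqref{tri}), averaging over $n \le x$, and using $k, \ell = O(1)$, it suffices to prove that
\[
(\log^k x)\, \E_{n \le x} \Bigl( \prod_{j=1}^k \tau\nu(n+h_j) \Bigr) \prod_{i \in S} H(n+h'_i) \approx 0
\]
for every nonempty $S \subseteq \{1,\dots,\ell\}$.

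To prove this I would first linearise. By Landreau's inequality \eqref{landreau-special} we have $\tau\nu(n+h_j) \ll \nu(n+h_j) \sum_{e_j \le D:\, e_j \mid n+h_j} \tau(e_j)^{O(1)}$, and by \eqref{geo-def}, $H(n+h'_i) = \sum_{d_i \le D:\, d_i \mid n+h'_i} \alpha(d_i)$ with $\alpha \ge 0$. Substituting and interchanging the order of summation, matters reduce to bounding the correlations $\E_{n \le x} \prod_{j=1}^k \nu(n+h_j) 1_{e_j \mid n+h_j} \prod_{i \in S} 1_{d_i \mid n+h'_i}$ for $e_1,\dots,e_k,(d_i)_{i\in S} \le D$, weighted by $(\log^k x)\,\tau(e_1\cdots e_k)^{O(1)} \prod_{i \in S} \alpha(d_i)$ and summed. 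Lemma~\ref{nusieve} (with $\ell' = |S|$ and the $e_j$ playing the role of its sieve divisors, after relabeling) bounds each such correlation by
\[
\frac{\tau(e_1 \cdots e_k \prod_i d_i)^{O(1)}}{e_1 \cdots e_k \prod_i d_i\, \log^k R} \int_1^\infty \Bigl(\prod_{p \mid e_1 \cdots e_k} \min(\sigma \log_R p, 1)\Bigr) \frac{d\sigma}{\sigma^A} + \frac{R^{2k}}{x}
\]
for any $A > 0$. The $\tfrac{R^{2k}}{x}$ term, after multiplying by $\log^k x$ and summing over the $O(D^{k+\ell})$ choices of $(e_j),(d_i)$ with their $x^{o(1)}$ divisor weights, contributes $\ll R^{2k} D^{k+\ell} x^{-1+o(1)} \approx 0$ by \eqref{rdk0}.

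For the main term I would bound $\tau(e_1 \cdots e_k \prod_i d_i)^{O(1)} \ll \tau(e_1 \cdots e_k)^{O(1)} \prod_{i \in S} \tau(d_i)^{O(1)}$, so that the sum over $(d_i)_{i \in S}$ factors; since $S$ is nonempty, \eqref{dao} and \eqref{logRD} give that this factored sum is $\ll_{A'} \log^{-A'} \eta$ for any $A' > 0$. Meanwhile the sum over $(e_j)_{j \le k}$ of $\frac{\tau(e_1 \cdots e_k)^{O(1)}}{e_1 \cdots e_k} \prod_{p \mid e_1 \cdots e_k} \min(\sigma \log_R p, 1)$ is, by \eqref{euler-stop}, at most an Euler product over $p \le D$ with local factors $1 + O(\min(\sigma \log_R p, 1)/p)$, which by Mertens' theorem \eqref{mertens-alt} is $\ll (1 + \sigma \log_R D)^{O(1)} \ll \sigma^{O(1)} (\log_R x)^{O(1)}$ (using $\log_R D \asymp \log_R x$, which follows from \eqref{D-def}), so the $\sigma$-integral converges once $A$ is large. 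Collecting the estimates, the main term is
\[
\ll_{A'} (\log^k x) \frac{(\log_R x)^{O(1)}}{\log^k R} \log^{-A'} \eta = (\log_R^k x)(\log_R x)^{O(1)} \log^{-A'} \eta \ll \log^{O(1) - A'} \eta,
\]
since $\log_R^k x = \log^{k/(5\max(1,k))}\eta$ and $\log_R x = \log^{1/(5\max(1,k))}\eta$ by \eqref{lrn}, \eqref{R-def}. Choosing $A'$ large enough makes this $\approx 0$, which proves the proposition.

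The only delicate point is this last balancing: the divisor-weight and $(\log_R x)^{O(1)}$ losses arising from Landreau's inequality, Lemma~\ref{nusieve}, and the Euler product computations must all be absorbed by the decay supplied by \eqref{dao} — which is precisely why Lemma~\ref{lamlam} was formulated with an arbitrary divisor weight $\tau(d)^A$ and an arbitrarily large negative power of $\log \eta$ available on its right-hand side. Everything else is routine multiplicative bookkeeping, and, as in the other steps of the argument, the hypothesis $k \le 2$ plays no role here.
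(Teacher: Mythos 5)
Your proposal is correct and takes essentially the same route as the paper's proof: the paper writes $\lambda_\Siegel^\sharp(n+h'_j) = \lambda_\Siegel(n+h'_j) + O(H(n+h'_j))$, multiplies these out using $|\lambda_\Siegel|\le 1$, and after relabeling reduces to showing $(\log^k x)\,\E_{n\le x}\prod_j \tau\nu(n+h_j)\prod_{j'}H(n+h'_{j'})\approx 0$, then proceeds exactly as you do --- Landreau to linearise $\tau$, expansion of $H$, Lemma~\ref{nusieve}, \eqref{rdk0} for the $R^{2k}/x$ error, and finally \eqref{dao} plus Euler products and Mertens to crush the main term by choosing the decay exponent large. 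Your telescoping identity is merely a more explicit bookkeeping of the paper's ``multiplying these estimates together, and relabeling'' step, and your closing observations about the balancing of losses against \eqref{dao} and about $k\le 2$ being irrelevant here both match the paper.
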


From Lemma~\ref{lamlam} we have
$$ \lambda_\Siegel^\sharp(n+h'_j) = \lambda_\Siegel(n+h'_j) + O(H(n+h'_j))$$
for $j=1,\dots,\ell$.  Multiplying these estimates using~\eqref{ls-bound} and the triangle inequality, and relabeling, we reduce to showing that
$$ \E_{n \leq x} |\Lambda_\Siegel(n+h_1) \cdots \Lambda_\Siegel(n+h_k)| H(n+h'_1) \cdots H(n+h'_{\ell'}) \approx 0$$
for any $1 \leq \ell' \leq \ell$.  By~\eqref{tri} it suffices to show that
\begin{equation}\label{lang}
 (\log^k x) \E_{n \leq x} \tau \nu(n+h_1) \cdots \tau \nu(n+h_k) H(n+h'_1) \cdots H(n+h'_{\ell'}) \approx 0.
\end{equation}
Expanding out~\eqref{geo-def}, the left-hand side is
$$
 \sum_{d'_{1},\dots,d'_{\ell'} \leq D} \alpha(d'_1) \cdots \alpha(d'_{\ell'}) (\log^k x)
\E_{n \leq x} \tau \nu(n+h_1) \cdots \tau \nu(n+h_k) 1_{d'_1|n+h'_1,\dots,d'_{\ell'}|n+h'_{\ell'}};
$$
using~\eqref{landreau-special}, one can bound this further by
$$
\ll \sum_{d_{1},\dots,d_{k'} \leq D} \tau(d_1)^{O(1)} \cdots \tau(d_k)^{O(1)} \alpha(d_{k+1}) \cdots \alpha(d_{k'}) (\log^k x)
\E_{n \leq x} \prod_{j=1}^k\nu(n+h_j) \prod_{j=1}^{k'} 1_{d_i|n+h_i}
$$
where we use the notations $k' \coloneqq k + \ell'$, $h_{k+j} \coloneqq h'_j$, and $d_{k+j} \coloneqq d'_j$ for $j=1,\dots,\ell'$.
Applying Lemma~\ref{nusieve}, we can bound this by
\begin{align*}
& \ll_A (\log_R^k x) \int_1^\infty \sum_{d_{1},\dots,d_{k''} \leq D} \frac{\tau(d_1)^{O(1)} \cdots \tau(d_{k'})^{O(1)} \alpha(d_{k+1}) \cdots \alpha(d_{k'})}{d_1 \cdots d_{k'}}\\ 
&\quad \quad \left(\prod_{ p|d_1 \cdots d_k} \min(\sigma \log_R p,1)\right) \frac{d\sigma}{\sigma^A}+ \frac{D^{k'} R^{2k}}{x}
\end{align*}
for any $A>0$.  The contribution of the latter term $\frac{D^{k'} R^{2k}}{x}$ is $\approx 0$ thanks to~\eqref{rdk0}.  By~\eqref{dao}, the former term can be bounded by
$$ \ll_A \frac{\log_R^k x}{\log^A \eta} \int_1^\infty \sum_{d_{1},\dots,d_{k} \leq D} \frac{\tau(d_1)^{O(1)} \cdots \tau(d_{k})^{O(1)}}{d_1 \cdots d_{k}} \left(\prod_{ p|d_1 \cdots d_k} \min(\sigma \log_R p,1)\right) \frac{d\sigma}{\sigma^A}$$
for any $A>0$, which by~\eqref{euler-stop} can be bounded by
$$ \ll_A \frac{\log_R^k x}{\log^A \eta} \int_1^\infty \prod_{p \leq D} E_p(\sigma) \frac{d\sigma}{\sigma^A}$$
where
$$ E_p(\sigma) \coloneqq \sum_{d_{1},\dots,d_{k} \in \N_{(p)}} \frac{\tau(d_1)^{O(1)} \cdots \tau(d_{k})^{O(1)}}{d_1 \cdots d_k} 
\min(\sigma \log_R p,1)^{1_{p|d_1 \cdots d_k}}.$$
We can bound
$$ E_p(\sigma) \leq 1 + O\left( \frac{\min(\sigma \log_R p, 1)}{p} \right)$$
so by~\eqref{mertens-alt}, \eqref{rxr1} and setting $A$ large enough we conclude~\eqref{lang}.  This completes the proof of Proposition~\ref{step-3}.

\section{Fourth step: replacing the von Mangoldt Siegel model with a Type I approximant}\label{sec:typei-vonmangoldt}

We now execute step (iv) of the strategy outlined in the introduction. In this step we will achieve power savings in many of our error terms, and as a consequence we can often afford to lose factors such as $x^{O(\eps)}$, in contrast to other sections where even a loss of $\log x$ is often unacceptable.

It is convenient to perform a smooth dyadic decomposition of the convolution $\chi * \log$ in order to run a smoothed version of the Dirichlet hyperbola method.  Let $\phi \colon \R \to \R$ be a smooth even function supported on $[-1,1]$ of total mass one.  For any $t>0$, define the function
$$ \Phi_t(n) \coloneqq \phi\left(\log \frac{n}{t}\right),$$
which is a smooth cutoff to the interval $[t/e, et]$. Then for any natural number $n$, one has the identity
\begin{equation}\label{log-ident}
\begin{split} \int_0^\infty \Phi_t(n) \log t \frac{dt}{t} &= 
\int_0^\infty \phi\left(\log\frac{n}{t}\right) \log t \frac{dt}{t} \\
&= \int_\R \phi(u) (\log n - u)\ du \\
&= \log n,
\end{split}
\end{equation}
where we made the change of variables $u \coloneqq \log n - \log t$.  We conclude that
\begin{equation}\label{leo}
\chi * \log(n) = \int_0^\infty \chi*\Phi_t(n) \log t \frac{dt}{t}.
\end{equation}

As it turns out, the Dirichlet convolution $\chi * \Phi_t$ is of an adequate ``Type I'' form when $t \leq Dq^2_\chi$ or $x/t \leq (Dq^2_\chi)^2$.  Accordingly, we split
$$ \chi*\log =  (\chi*\log)^\sharp + (\chi*\log)^\flat$$
where $(\chi*\log)^\sharp$ is the Type I approximant
\begin{equation}\label{chilog}
\begin{split}
 (\chi*\log)^\sharp(n) &= \int_0^{Dq^2_\chi} \chi * \Phi_t(n) \log t \frac{dt}{t} \\
&\quad\quad  + \int_{Dq^2_\chi}^\infty \psi_{\leq (Dq^2_\chi)^2}(x/t) \chi * \Phi_t(n) \log t \frac{dt}{t} \\
&\quad\quad  + \int_{Dq^2_\chi}^\infty \psi_{> (Dq^2_\chi)^2}(x/t) \chi * \Phi_{Dq^2_\chi}(n) \log t \frac{dt}{t},
\end{split}
\end{equation}
and $(\chi*\log)^\flat$ is the error
\begin{equation}\label{chilog-flat}
(\chi*\log)^\flat(n) \coloneqq \int_{Dq^2_\chi}^\infty \psi_{>(Dq^2_\chi)^2}(x/t) \chi * (\Phi_t - \Phi_{Dq^2_\chi}) \log t \frac{dt}{t}.
\end{equation}
Thus, $(\chi*\log)^\sharp$ is the modification of $\chi*\log$ formed by replacing the cutoff $\Phi_t$ with $\Phi_{Dq^2_\chi}$ in the intermediate range $Dq^2_\chi \leq t \leq \frac{x}{(Dq^2_\chi)^2}$ of $t$ (using a smoothed version of the upper cutoff $t \leq \frac{x}{(Dq^2_\chi)^2}$ in order to facilitate some technical computations in the next section).   As it turns out, it will be the second term in the right-hand side of~\eqref{chilog} (in which the $\Phi_t$ term is supported in values $\gg x/(Dq^2_\chi)^2$, so that the $\chi$ term is supported in values $\ll (D q^2_\chi)^2$) that will give the main contributions, being a more complicated version of the (untwisted) Type I sum $(\chi \psi_{\leq (Dq^2_\chi)^2}) * \log$.

We then have a similar spliting
$$ \Lambda_\Siegel = \Lambda_\Siegel^\sharp + \Lambda_\Siegel^\flat$$
where
$$\Lambda_\Siegel^\sharp \coloneqq (\chi*\log)^\sharp \nu$$
and
$$\Lambda_\Siegel^\flat \coloneqq (\chi*\log)^\flat \nu.$$

We have good bounds on the distribution of $(\chi*\log)^\flat$ or $\Lambda_\Siegel^\flat$ in residue classes $a\ (q)$ with $q$ almost as large as $x^{2/3}$, as long as $(a,q)$ is not too large:

\begin{proposition}[$2/3$ level of distribution]\label{two-thirds}  Let $0 < \eps < \frac{1}{2}$, $1 \leq q \leq x$, and $a$ be an integer. Let $I$ be a subinterval of $[0,2x]$.  Let $f\colon \Z \to [-1,1]$ be a $q_\chi$-periodic function.
\begin{itemize}
\item[(i)]  We have
$$ \sum_{n \in I: n = a\ (q)} (\chi*\log)^\flat(n) f\left(\frac{n-a}{q} \right) \ll_\eps \frac{x}{q} \left( \frac{(a,q)^{3/2} q_\chi^{9/2} q^{3/2}}{x^{1-O(\eps)}} + \frac{x^{O(\eps)} (a,q)^2}{D^{1/2}} + \frac{1}{x^{\eps}} \right).$$
\item[(ii)] If $\eps$ is sufficiently small depending on $k,\ell,\eps_0$, then
we have
$$ \sum_{n \in I: n = a\ (q)} \Lambda_\Siegel^\flat(n) f\left(\frac{n-a}{q} \right) \ll_\eps (a,q)^{O(1)} \frac{x}{q} \left( \frac{q_\chi^{9/2} q^{3/2}}{x^{1-\eps_0}} + \frac{1}{x^{\eps}} \right).$$
\end{itemize}
\end{proposition}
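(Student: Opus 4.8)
\emph{Proof idea.}

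\textbf{Part (i): reduction to a bilinear sum.} Substituting \eqref{chilog-flat}, the cutoff $\psi_{>(Dq_\chi^2)^2}(x/t)$ (together with the lower limit of integration) confines $t$ to $Dq_\chi^2\leq t\leq x/(Dq_\chi^2)$; the subtraction of $\Phi_{Dq_\chi^2}$ is precisely what makes this $t$-integral convergent, and along it $\log t\ll\log x$ and $\int \tfrac{dt}{t}\ll\log x$. Since (as noted at the start of this section) we may afford losses of $x^{O(\eps)}$ and harmless powers of $\log x$ after shrinking $\eps$, and since $|\Phi_t-\Phi_{Dq_\chi^2}|\leq|\Phi_t|+|\Phi_{Dq_\chi^2}|$, it suffices to bound, uniformly for $Dq_\chi^2\leq s\leq x/(Dq_\chi^2)$ and after expanding $\chi*\Phi_s(n)=\sum_{de=n}\chi(d)\Phi_s(e)$, the sum
$$ S_s \coloneqq \sum_{\substack{d,e\,:\, de\in I\\ de\equiv a\ (q)}}\chi(d)\,\Phi_s(e)\, f\left(\frac{de-a}{q}\right)$$
by the right-hand side of (i). The key structural point is that $\Phi_s(e)$ forces $e\asymp s$, while $de\in I\subseteq[0,2x]$ forces $d\ll x/s$; with $Dq_\chi^2\leq s\leq x/(Dq_\chi^2)$ the variable $e$ is confined to $[\gg Dq_\chi^2,\ll x/(Dq_\chi^2)]$, so that (after splitting off the short-$d$ ranges $d\ll Dq_\chi^2$, which are of Type~I, handled by completion of sums and the character-sum bound \eqref{sumni}) $S_s$ is a balanced bilinear sum on the hyperbola $de\equiv a\ (q)$, the regime in which the Kloosterman bounds of Lemmas \ref{fourier-expand}, \ref{mfe} are effective.

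\textbf{Part (i): the hyperbola estimate.} Since $\chi(d)$ and $f((de-a)/q)$ are controlled modulo $q_\chi$, the weight $\chi(d)f((de-a)/q)1_{de\equiv a(q)}$ is periodic in $(d,e)$ modulo $Q\coloneqq\mathrm{lcm}(q,q_\chi)$. Writing $de\equiv a\ (q)$ as a union of congruences modulo $Q$ and applying Lemma \ref{mfe} with modulus $Q$ and a suitable $q_0\mid Q$ (a multiple of $q_\chi$) decomposes the weight into (1) a ``main part'' supported on Fourier frequencies divisible by $Q/q_0$, which is a character-twisted divisor-type sum to a smaller modulus $q_0'\mid Q$, and (2) a sum of oscillating phases $e_Q(u_1d+u_2e)$ with $Q/q_0\nmid u_1,u_2$ and coefficients of size $\ll\tau(q_0)^{O(1)}q_0^{3/2}\tau(Q)Q^{-3/2}(u_1,u_2,Q)^{1/2}$. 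For (2) one sums the smooth $e$-variable against $e_Q(u_2e)$ by Poisson, the bare $d$-variable against $e_Q(u_1d)$ over $I/e$ as a geometric series, and pairs the result with the coefficient bounds via \eqref{estermann} and \eqref{divisor-bound}; square-root cancellation in the Kloosterman sums produces exactly the term $x\cdot\frac{(a,q)^{3/2}q_\chi^{9/2}q^{1/2}}{x^{1-O(\eps)}}$ (the exponent $q^{3/2}$, hence the $2/3$ in the title). For (1), the modulus $q_0'$ being ``small'' makes it essentially a Type~I sum: completing the smooth $e$-variable (Poisson, incurring the truncation error $O_A((Dq_\chi^2)^{-A})=O_A(x^{-\eps})$ from $\Phi_s$ together with an integer-counting error $\ll x/(Dq_\chi^2)\ll x^{1-O(\eps)}/D^{1/2}$) leaves a sum over $d$ still carrying a single copy of $\chi$, to which \eqref{sumni} applies; this gives the second and third error terms. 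All the greatest common divisors arising (of $a$ with $q$, and of $q$ with $q_\chi$) are tracked through $Q$, $q_0$, $q_0'$, producing the crude powers of $(a,q)$, which we do not optimize.

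\textbf{Part (ii).} Expand $\nu(n)=\sum_{d_1,d_2\mid n}\mu(d_1)\mu(d_2)\psi_{\leq R}(d_1)\psi_{\leq R}(d_2)$, a Type~I sum supported on $d_1,d_2\leq R$. Interchanging summations, the conditions $d_1\mid n$, $d_2\mid n$, $n\equiv a\ (q)$ are, by Lemma \ref{chin}, either incompatible or place $n$ in a single class $a'$ modulo $q'\coloneqq\mathrm{lcm}(q,[d_1,d_2])\leq qR^2$. Since $\eps$ may be taken small depending on $k,\ell,\eps_0$ and $R^{O(1)}=x^{o(1)}\ll x^{\eps}$, part (i) applied with $(q',a')$ in place of $(q,a)$, followed by summation over $d_1,d_2\leq R$ (with $\sum_{d_1,d_2\leq R}\tfrac{(q,[d_1,d_2])}{[d_1,d_2]}\tau([d_1,d_2])^{O(1)}\ll\tau(q)^{O(1)}\log^{O(1)}R\ll x^{\eps}$ controlling the $\tfrac{x}{q'}$-prefactors), gives the claim; the $D^{1/2}$-saving term of (i) is absorbed into $x^{-\eps}$ using $x^{O(\eps)}/D^{1/2}\leq x^{-\eps}$ for $\eps$ small enough, and all powers of $(a,q)$ and $[d_1,d_2]$ are collected into the factor $(a,q)^{O(1)}$.

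\textbf{Main obstacle.} The lengthy but routine part is the Fourier-analytic bookkeeping of the smoothed hyperbola method; the genuine difficulty is to carry out the Fourier expansion on the hyperbola uniformly in the (possibly large) common divisors, for which the \emph{modified} expansion of Lemma \ref{mfe} is essential: a naive expansion leaves low-frequency terms that are not individually small, and it is the combination of Lemma \ref{mfe} (isolating them into a Type~I remainder of small modulus) with the character-sum cancellation \eqref{sumni} that controls them.
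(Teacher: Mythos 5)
Your proposal follows the same overall strategy as the paper (open the $\flat$-part into a bilinear/hyperbola sum, apply Lemma~\ref{mfe}, bound the oscillating Fourier modes via Kloosterman/Estermann, and bound the surviving ``main part'' via a second cancellation; then deduce (ii) by opening $\nu$ as a Type~I sum and applying (i)). The one place where you genuinely diverge is the treatment of the zero Fourier mode in the main part. You reduce to a single $S_s = \sum\chi(d)\Phi_s(e)\cdots$ by the pointwise bound $|\Phi_t-\Phi_{Dq_\chi^2}|\leq|\Phi_t|+|\Phi_{Dq_\chi^2}|$. The paper instead writes $\Phi_t-\Phi_{Dq_\chi^2}=-\int_{Dq_\chi^2}^{t}\tilde\Phi_{t'}\,\frac{dt'}{t'}$ with $\tilde\Phi_{t'}(n)=\phi'(\log(n/t'))$, and it is precisely the zero total integral of $\phi'$ that makes the $(0,0)$ Fourier coefficient $\Psi(0,0)$ vanish identically, after which the $D^{1/2}$ savings fall out of the rapid decay of $\Psi(\xi)$ alone. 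Having discarded this structure, you must recover the cancellation in the $(0,0)$ mode from the $\chi(d)$ factor instead, and indeed $\hat F(0,0)=0$ does hold here because the period of the constraint $n_1n_2\equiv c\ (q'')$ (with $q''=[d,q'_0]$) is always divisible by $q_\chi$, so the induced character mod $q''/(n_1,q'')$ is nonprincipal and the complete character sum over units vanishes. But your write-up does not articulate this, and the step ``completing the smooth $e$-variable\,\ldots\,leaves a sum over $d$ still carrying a single copy of $\chi$, to which \eqref{sumni} applies'' glosses over the fact that the weight $(n_1,q'')1_{(n_1,q'')|c}$ is not a polynomial in $n_1$, and that the $d$-range is generically much shorter than the period $q''$, so \eqref{sumni} does not apply off the shelf. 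The stated ``integer-counting error $\ll x/(Dq_\chi^2)$'' is also not derived and differs from the paper's $x^{O(\eps)}x/(D^{1/2}q_\chi^2)$, which comes from the case distinction $[d,q_0']\lessgtr D^{1/2}q_\chi^2$. Finally, the parenthetical about ``splitting off the short-$d$ ranges $d\ll Dq_\chi^2$'' is unnecessary: by construction $t$ lies in $[Dq_\chi^2, x/(Dq_\chi^2)^2]$, so $d\gg(Dq_\chi^2)^2$ automatically. None of this is fatal --- the $\chi$-orthogonality does ultimately rescue your version --- but it makes the main-part estimate substantially less clean than the paper's $\Psi(0,0)=0$ device, and the quantitative bookkeeping is left as an exercise. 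You should either adopt the fundamental-theorem-of-calculus reduction to $\phi'$ (which neutralizes the zero mode for free), or else spell out the $\hat F(0,0)=0$ calculation and the incomplete-sum estimates carefully.
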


The powers of $(a,q)$ and $q_\chi$ are of minor importance and these terms can be neglected on a first reading.  The key point here is that we can have a power savings over the trivial bound of $O_\eps(x^{1+\eps}/q)$ even when $q$ is somewhat above $x^{1/2}$ (indeed, the above bounds can remain non-trivial as $q$ approaches $x^{2/3}$).

\begin{proof}  We first prove (i).  Note that $0\leq (\chi*\log)^{\flat}(n)\leq \chi*\log(n)$. From~\eqref{divisor-bound} we may bound the left-hand side of the claim by $O_\eps(x^{O(\eps)} ( 1 + x/q ))$.  From this we see that we may assume without loss of generality that we may take $\eps$ is sufficiently small depending on $k'$, and we may also assume that $q \leq x^{2/3}$, since otherwise the above crude bound is already dominated by $x^{O(\eps)} q^{1/2}$ and hence by $\frac{x}{q} \frac{(a,q)^{3/2} q_\chi^{9/2} q^{3/2}}{x^{1-O(\eps)}}$.  By shrinking $I$ slightly (and using~\eqref{divisor-bound} to treat the error) we may assume that $I \subset [x^{1-\eps}, 2x]$.

The integrand in~\eqref{chilog-flat} is only non-zero in the range $Dq^2_\chi \leq t \leq x /(Dq^2_\chi)^2$.
By the fundamental theorem of calculus one has
$$ \Phi_t(n) - \Phi_{Dq^2_\chi}(n) = -\int_{Dq^2_\chi}^t \tilde \Phi_{t'}(n)\frac{dt'}{t'}$$
where
$$ \tilde \Phi_t(n) \coloneqq \phi'\left(\log \frac{n}{t}\right)$$ 
so by the triangle inequality (and increasing $\eps$ slightly) it will suffice to show that
$$ \sum_{n = a\ (q)} f\left(\frac{n-a}{q} \right) 1_I(n) \chi * \tilde \Phi_t(n) \ll_\eps x^{O(\eps)} (a,q)^{3/2} q_\chi^{9/2} q^{1/2} + \frac{x^{1+O(\eps)} (a,q)^2}{D^{1/2} q} + \frac{x^{1-\eps}}{q}$$
for all $Dq^2_\chi \leq t \leq x/(Dq^2_\chi)^2$.

We can approximate $1_I$ by a cutoff $\psi_I \colon \R \to \R$ supported on $I$ obeying $\psi_I(y)=1$ whenever $\textnormal{dist}(y,I)\geq x^{1-2\varepsilon}$, and additionally obeying the derivative estimates 
\begin{equation}\label{psijy}
\psi_I^{(j)}(y) \ll_j x^{-(1-2\eps) j}
\end{equation}
for all $j \geq 0$ and $y \in \R$, with the error being acceptable by~\eqref{divisor-bound}.  It thus remains to establish the bound
$$ \sum_{n = a\ (q)} f\left(\frac{n-a}{q} \right) \psi_I(n) \chi * \tilde \Phi_t(n) \ll_\eps x^{O(\eps)} (a,q)^{3/2} q_\chi^{9/2} q^{1/2} + \frac{x^{1+O(\eps)} (a,q)^2}{D^{1/2} q} .$$
The left-hand side can be rewritten as
$$ \sum_{n_1,n_2} \chi(n_1) \tilde \Phi_t(n_2) \psi_I(n_1 n_2) 1_{n_1 n_2 = a\ (q)} f\left(\frac{n_1 n_2-a}{q}\right).$$
By the triangle inequality, it suffices to show that
$$ \sum_{n_1,n_2} \chi(n_1) \tilde \Phi_t(n_2) \psi_I(n_1 n_2) 1_{n_1 n_2 = a'\ (qq_\chi)} \ll_\eps x^{O(\eps)} (a,q)^{3/2} q_\chi^{7/2} q^{1/2} + \frac{x^{1+O(\eps)} (a,q)^2}{D^{1/2} q q_\chi}.$$
for any $a' \in \Z/(qq_\chi\Z)$ with $a' = a\ (q)$.  If we set $q_0 \coloneqq (a,q) q_\chi$, then $(a', qq_\chi)$ divides $q_0$.  Writing
$$ q'_0 \coloneqq ((a',qq_\chi)q_0, qq_\chi)$$
and using Lemma~\ref{mfe} and the triangle inequality and~\eqref{divisor-bound}, we obtain the bound
\begin{equation}\label{insert}
\sum_{n_1,n_2} 
\chi(n_1) \tilde \Phi_t(n_2) \psi_I(n_1 n_2) 1_{n_1 n_2 = a'\ (qq_\chi)}  \ll_\eps x^{\eps} \left( \frac{q'_0}{qq_\chi} X + q_0^{3/2} q^{-3/2} Y \right)
\end{equation}
where
$$ X \coloneqq \left| \sum_{n_1,n_2} \chi(n_1) \tilde \Phi_t(n_2) \psi_I(n_1 n_2) 1_{n_1 n_2=a'\ (q'_0)} 
1_{(n_1 n_2,qq_\chi)=(a',qq_\chi)}\right|$$
and
$$ Y \coloneqq \sum_{u_1,u_2 \in \Z/(qq_\chi\Z): qq_\chi/q_0 \nmid u_1, u_2} (u_1,u_2,qq_\chi)^{1/2}
\left| \sum_{n_1,n_2} \tilde \Phi_t(n_2) \psi_I(n_1 n_2) e_{qq_\chi}(u_1 n_1 + u_2 n_2) \right|.$$
We first estimate the quantity $Y$.  From repeated summation by parts we have
\begin{align*}
\sum_{n_1,n_2} \tilde \Phi_t(n_2) \psi_I(n_1 n_2) e_q(u_1 n_1 + u_2 n_2) &\ll_\eps x^{-1+O(\eps)} \frac{x/t}{\|u_1/(qq_\chi)\|_{\R/\Z}} \frac{t}{\|u_2/(qq_\chi)\|_{\R/\Z}} \\
&= \frac{x^{O(\eps)}}{\|u_1/qq_\chi\|_{\R/\Z} \|u_2/qq_\chi\|_{\R/\Z}}.
\end{align*}
Writing $u_1 = d u'_1$, $u_2 = d u'_2$ with $d = (u_1,u_2,qq_\chi)^{1/2}$, we then have
\begin{align*}
 Y &\ll_\eps \sum_{d|qq_\chi} d^{1/2} \sum_{1 \leq u'_1, u'_2 < \frac{qq_\chi}{d}}
\frac{x^{O(\eps)}}{\|u'_1/(qq_\chi/d)\|_{\R/\Z} \|u'_2/(qq_\chi/d)\|_{\R/\Z}} \\
&\ll_\eps x^{O(\eps)} \sum_{d|qq_\chi} d^{1/2} \left(\frac{qq_\chi}{d} \log\left(2+\frac{qq_\chi}{d}\right)\right)^2 \\
&\ll_\eps x^{O(\eps)} q^2 q_\chi^2.
\end{align*}
Thus we see that the contribution of $Y$ is acceptable.  Now we consider the contribution of $X$.  From M\"obius inversion we have
$$1_{(n_1 n_2,qq_\chi)=(a',qq_\chi)} = \sum_{d: (a',qq_\chi) | d | qq_\chi} \mu\left(\frac{d}{(a',qq_\chi)}\right) 1_{d|n_1 n_2}.$$
By~\eqref{divisor-bound} and the triangle inequality, one thus has
$$ X \ll_\eps x^{O(\eps)} \sum_{n_1,n_2} \chi(n_1) \tilde \Phi_t(n_2) \psi_I(n_1 n_2) 1_{n_1 n_2=a'\ (q'_0)} 
1_{d|n_1 n_2}$$
for some $d$ with $(a',qq_\chi) | d | qq_\chi$.  On the one hand, we see from~\eqref{divisor-bound} (noting that the constraints $n_1n_2 = a'\ (q'_0)$, $d|n_1 n_2$ constrain $n_1 n_2$ to at most one residue class modulo $[d,q'_0]$) that
\begin{equation}\label{xd}
 X \ll_\eps x^{O(\eps)} \frac{x}{[d,q'_0]}.
\end{equation}
On the other hand, we can write
$$ X \ll_\eps x^{O(\eps)} \left| \sum_{n_1,n_2} F(n_1,n_2) \phi'\left(\log \frac{n_2}{t}\right) \psi_I(n_1 n_2) \right| $$
where $F$ is the $[d,q'_0]$-periodic function
$$ F(n_1,n_2) \coloneqq \chi(n_1) 1_{n_1 n_2 = a'\ (q'_0)} 1_{d|n_1 n_2}.$$
By Fourier expansion and Poisson summation, we can then write
$$ X \ll_\eps x^{O(\eps)} \left| \sum_{\xi_1, \xi_2 \in \frac{\Z}{dq'_0}} \hat F(\xi_1,\xi_2) \Psi(\xi_1,\xi_2) \right|$$
where
$$ \hat F(\xi_1,\xi_2) \coloneqq \E_{n_1,n_2} F(n_1,n_2) e(-n_1 \xi_1 - n_2 \xi_2)$$
and
$$ \Psi(\xi_1,\xi_2) \coloneqq \int_0^\infty \int_0^\infty \tilde \Phi_t(x_2) \psi_I(x_1 x_2) e(x_1 \xi_1 + x_2 \xi_2)\ dx_1 dx_2.$$
From the area-preserving change of variables $(u_1,u_2) \coloneqq (\log \frac{x_2}{t}, x_1 x_2)$ and the fundamental theorem of calculus we have
$$ \Psi(0,0) = \int_\R \int_0^\infty \phi'(u_1) \psi_I(u_2)\ du_1 du_2 = 0,$$
and from integration by parts one has the bounds
$$ \Psi(\xi_1,\xi_2) \ll_m \frac{x^{1+O_m(\eps)}}{(1 + t|\xi_1|)^{m} (1 + \frac{x}{t}|\xi_2|)^{m}}$$
for any $m \geq 0$.  Meanwhile, using the trivial bound $|\hat F(\xi_1,\xi_2)| \leq 1$
and $t,\frac{x}{t} \geq D q^2_\chi$, we have
$$ X \ll_{m,\eps} x^{O_m(\eps)} \sum_{\xi \in (\frac{\Z}{[d,q'_0]})^2 \backslash \{(0,0)\}} (1 + D q^2_\chi |\xi|)^{-m}$$
for any $m>0$, and thus
$$ X \ll_{m,\eps} x^{O_m(\eps)} \left(\frac{D q_\chi}{[d,q'_0]}\right)^{-m}$$
for any $m>0$.  In particular, if $[d,q'_0] \leq D^{1/2} q^2_\chi$, we have $X \ll_\eps 1$ (say) by choosing $m$ large enough.  Comparing this with~\eqref{xd}, we conclude that
$$ X \ll_\eps x^{O(\eps)} \frac{x}{D^{1/2} q^2_\chi}$$
in all cases.  Inserting these bounds back into~\eqref{insert} and writing $q_0 = (a,q) q_\chi$ and bounding 
$$q'_0 \leq (a',qq_\chi) q_0 \leq q_0^2 = (a,q)^2 q_\chi^2,$$
we conclude that
$$ \sum_{n_1,n_2} \chi(n_1) \tilde \Phi_t(n_2) \psi_I(n_1 n_2) 1_{n_1 n_2 = a'\ (qq_\chi)}  \ll_\eps x^{O(\eps)} \left( \frac{x (a,q)^2}{D^{1/2} q q_\chi} + (a,q)^{3/2} q_\chi^{7/2} q^{1/2} \right),$$
and the claim (i) follows.

Now we prove (ii).  Expanding out the Selberg sieve $\nu$ as
\begin{equation}\label{nid}
 \nu(n) = \sum_{d \leq R^2} a_d 1_{d|n}
\end{equation}
for some sieve weights $a_d$ that can be crudely bounded using~\eqref{divisor-bound} as
\begin{equation}\label{nu-bound}
a_d \ll \tau(d) \ll_\eps x^\eps,
\end{equation}
we see from the triangle inequality and pigeonhole principle (noting that $\sum_{d \leq R^2} \frac{1}{d} \ll_\eps x^\eps$) that
$$ \sum_{n \in I: n = a\ (q)} \Lambda_\Siegel^\flat(n) f\left(\frac{n-a}{q} \right) \ll_\eps 
x^{2\eps} \left| d \sum_{n \in I: n = a\ (q); d|n} (\chi*\log)^\flat(n) f\left(\frac{n-a}{q} \right) \right|$$
for some $d \leq R^2$.  We can restrict attention to those $d$ with $(d,q) | (a,q)$, since otherwise the sum is empty.  The conditions $n = a\ (q)$, $d|n$ can then be combined into a single congruence class $n = a'\ ([q,d])$, with $(a',[q,d]) \leq d (a,q)$; on this class, the quantity $f\left(\frac{n-a}{q} \right)$ can be viewed as a $q_\chi$-periodic function of $\frac{n-a'}{[q,d]}$.  Applying (i) (with $\eps$ replaced by $3\eps$) we have
\begin{align*} &d \sum_{n \in I: n = a\ (q); d|n} (\chi*\log)^\flat(n) f\left(\frac{n-a}{q} \right)\\
&\quad \quad \ll 
\frac{dx}{[q,d]} \left( \frac{d^{3/2} (a,q)^{3/2} q_\chi^{9/2} [q,d]^{3/2}}{x^{1-O(\eps)}} + \frac{x^{O(\eps)} d^{3/2} (a,q)^2}{D^{1/2}} + \frac{1}{x^{3\eps}} \right).
\end{align*}
Writing $\frac{d}{[q,d]} = \frac{(d,q)}{q} \leq \frac{(a,q)}{q}$ and then bounding $[q,d] \leq qd$ and $d \leq R^2$, we conclude
$$ \sum_{n \in I: n = a\ (q)} \Lambda_\Siegel^\flat(n) f\left(\frac{n-a}{q} \right) \ll_\eps 
\frac{x}{q}
\left( \frac{R^6 (a,q)^{5/2} q_\chi^{9/2} q^{3/2}}{x^{1-O(\eps)}} + \frac{x^{O(\eps)} R^{3} (a,q)^3}{D^{1/2}} + \frac{(a,q)}{x^{\eps}} \right).$$
Using~\eqref{R0-def}, \eqref{D-def}, we obtain the claim for $\eps$ small enough.
\end{proof}

Now we can prove

\begin{proposition}[Replacing $\Lambda_\Siegel$ with a Type I approximant]\label{step-4} Assume $k \leq 2$. Then we have
\begin{align*}
& \E_{n \leq x} \Lambda_\Siegel(n+h_1) \cdots \Lambda_\Siegel(n+h_k) \lambda_\Siegel^\sharp(n+h'_1) \cdots \lambda_\Siegel^\sharp(n+h'_\ell) \\
&\quad
\approx \E_{n \leq x} \Lambda_\Siegel^\sharp(n+h_1) \cdots \Lambda_\Siegel^\sharp(n+h_k) \lambda_\Siegel^\sharp(n+h'_1) \cdots \lambda_\Siegel^\sharp(n+h'_\ell).
\end{align*}
\end{proposition}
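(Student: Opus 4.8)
The plan is to split off a single $\Lambda_\Siegel^\flat$ factor and then invoke the $2/3$ level of distribution, Proposition~\ref{two-thirds}(ii); the hypothesis $k\le2$ enters by keeping the relevant modulus below $x^{2/3}$, the threshold at which that proposition beats the trivial bound. (The case $k=0$ is vacuous.) First I would write $\Lambda_\Siegel=\Lambda_\Siegel^\sharp+\Lambda_\Siegel^\flat$ and telescope,
\[
\prod_{j=1}^{k}\Lambda_\Siegel(n+h_j)-\prod_{j=1}^{k}\Lambda_\Siegel^\sharp(n+h_j)=\sum_{j_0=1}^{k}\Lambda_\Siegel^\flat(n+h_{j_0})\prod_{j<j_0}\Lambda_\Siegel(n+h_j)\prod_{j>j_0}\Lambda_\Siegel^\sharp(n+h_j);
\]
since $0\le(\chi*\log)^\flat\le\chi*\log$ and $\nu\ge0$ we have $\Lambda_\Siegel^\flat\ge0$, so by the triangle inequality and passing to absolute values it suffices (after relabeling) to show, for each choice of $G_j\in\{|\Lambda_\Siegel|,|\Lambda_\Siegel^\sharp|\}$,
\begin{equation}\label{plan-main}
\E_{n\le x}\,\Lambda_\Siegel^\flat(n+h_1)\Big(\prod_{j=2}^{k}G_j(n+h_j)\Big)\prod_{j'=1}^{\ell}\bigl|\lambda_\Siegel^\sharp(n+h'_{j'})\bigr|\approx0,
\end{equation}
where, crucially, $k\le2$ leaves at most one factor $G_j$.

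Next I would produce Type I majorants for the non-distinguished factors, each of the shape $x^{O(\eps)}\sum_{d\mid m,\ d\le\sqrt{2x}R^2}\tau(d)^{O(1)}$. For $\lambda_\Siegel^\sharp$ this is immediate from \eqref{ls-def}: $|\lambda_\Siegel^\sharp(m)|\le\sum_{d\mid m,\,d\le D}\tau(d)^{O(1)}$. For $\Lambda_\Siegel^\sharp=(\chi*\log)^\sharp\nu$, inspecting the three pieces of \eqref{chilog} shows that in every constituent $\chi*\Phi_t$ or $\chi*\Phi_{Dq_\chi^2}$ the cutoffs force one of the two divisors to be $\ll(Dq_\chi^2)^2\le\sqrt{2x}$ (for $\eps_0$ small); expanding $\nu$ via \eqref{nid}--\eqref{nu-bound} and bounding the smooth weights and the $\tfrac{dt}{t}$-integral trivially gives the claim. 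For $\Lambda_\Siegel=(\chi*\log)\nu$ I would instead apply the Dirichlet hyperbola identity to $\chi*\log(m)=\sum_{ab=m}\chi(a)\log b$, split at $\min(a,b)\le\sqrt m$, discard the $\chi$-twist, and expand $\nu$ as before. Inserting these majorants into \eqref{plan-main} and expanding all the divisor sums reduces matters, up to a factor $x^{O(\eps)}$, to
\begin{equation}\label{plan-typeI}
\sum_{\substack{d_j\le\sqrt{2x}R^2\ (2\le j\le k)\\ d'_{j'}\le D\ (1\le j'\le\ell)}}\tau(\cdots)^{O(1)}\,\E_{n\le x}\,\Lambda_\Siegel^\flat(n+h_1)\prod_{j=2}^{k}1_{d_j\mid n+h_j}\prod_{j'=1}^{\ell}1_{d'_{j'}\mid n+h'_{j'}}\approx0.
\end{equation}

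Now I would estimate the inner average by the $2/3$ level of distribution. By Lemma~\ref{chin} it vanishes unless the divisibility conditions on the shifts other than $h_1$ confine $n$ to a single class modulo $q\coloneqq[d_2,\dots,d_k,d'_1,\dots,d'_\ell]$, in which case $q\asymp\big(\prod_{j\ge2}d_j\big)\big(\prod_{j'}d'_{j'}\big)\le(\sqrt{2x}R^2)^{k-1}D^{\ell}\ll x^{1/2+O(\eps)}$; this is the essential use of $k\le2$. Writing $m=n+h_1$, its residue $a_1\pmod q$ satisfies $(a_1,q)=O(1)$, since $p^v\|q$ and $p^v\mid a_1$ would force $p^v$ to divide one of the $d_j$ or $d'_{j'}$, hence to divide $m+c$ for a fixed $c\ne0$ (a difference of two of the $h$'s), hence $p^v\mid c=O(1)$. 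Applying Proposition~\ref{two-thirds}(ii) with this $q$, trivial weight $f\equiv1$, and $\eps$ small in terms of $k,\ell,\eps_0$, together with $\Lambda_\Siegel^\flat\ge0$, yields
\[
\E_{n\le x}\,\Lambda_\Siegel^\flat(n+h_1)\prod_{j=2}^{k}1_{d_j\mid n+h_j}\prod_{j'=1}^{\ell}1_{d'_{j'}\mid n+h'_{j'}}\ll_\eps\frac{q_\chi^{9/2}q^{1/2}}{x^{1-\eps_0}}+\frac1{q\,x^{\eps}}.
\]
Summing over the $d_j,d'_{j'}$: the contribution of $\tfrac1{qx^\eps}$ is $\ll x^{-\eps+O(\eps)}\approx0$ (as $\sum_{d\le\sqrt{2x}R^2}\tau(d)^{O(1)}/d$ and $\sum_{d'\le D}\tau(d')^{O(1)}/d'$ are $x^{O(\eps)}$), while, using $q^{1/2}\asymp\big(\prod_{j\ge2}d_j\big)^{1/2}\big(\prod_{j'}d'_{j'}\big)^{1/2}$ and $\sum_{d\le Y}\tau(d)^{O(1)}d^{1/2}\ll Y^{3/2}x^{O(\eps)}$, the contribution of $\tfrac{q_\chi^{9/2}q^{1/2}}{x^{1-\eps_0}}$ is
\[
\ll x^{O(\eps)}\,\frac{q_\chi^{9/2}}{x^{1-\eps_0}}\,(\sqrt{2x}R^2)^{\frac32(k-1)}D^{O(k+\ell)},
\]
which is $\approx0$ by \eqref{dsmash}. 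This proves \eqref{plan-typeI}, hence \eqref{plan-main}, hence the proposition.

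I expect the main obstacle to be the middle step: verifying that both $(\chi*\log)^\sharp$, through its three-piece definition \eqref{chilog}, and the honest convolution $\chi*\log$, where the hyperbola method is genuinely needed, are dominated by Type I divisor sums of length $\le\sqrt{2x}R^2$, while keeping the $t$-integration and the various smooth cutoffs (from $\nu$, $\psi_{\le(Dq_\chi^2)^2}$, and $\Phi_t$) under control. The remainder is bookkeeping, the one structural point being that $k\le2$ is precisely what bounds the modulus $q\le(\sqrt{2x}R^2)^{k-1}D^{\ell}$ by roughly $x^{1/2}$, comfortably within the range where Proposition~\ref{two-thirds}(ii) gives a power saving.
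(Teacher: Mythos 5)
Your overall plan matches the paper's: telescope $\Lambda_\Siegel-\Lambda_\Siegel^\sharp$ to isolate a single $\Lambda_\Siegel^\flat$ factor, keep the auxiliary modulus arising from the other factors below roughly $x^{1/2}$ (this is exactly where $k\le2$ enters), and apply Proposition~\ref{two-thirds}(ii); the $(a,q)=O(1)$ check and the comparison against \eqref{dsmash} are all correct. There is, however, a genuine gap at the reduction step: you pass to absolute values across all factors and then invoke $\Lambda_\Siegel^\flat\ge0$ in order to drop the absolute value from the distinguished factor and apply Proposition~\ref{two-thirds}(ii) with $f\equiv1$. This nonnegativity is not established and is in fact dubious: $(\chi*\log)^\flat(n)$ is the $t$-integral of $\chi*(\Phi_t-\Phi_{Dq^2_\chi})(n)=\sum_{d\mid n}\chi(n/d)\bigl(\Phi_t(d)-\Phi_{Dq^2_\chi}(d)\bigr)$ against a positive weight, and the inner signed sum over $d$ has no reason to be nonnegative since the $\chi(n/d)$ oscillate. (The paper's proof of Proposition~\ref{two-thirds}(i) does make the remark ``$0\le(\chi*\log)^\flat(n)\le\chi*\log(n)$'', but it is used there only for the crude pointwise bound $O_\eps(x^{O(\eps)})$, which follows from $|(\chi*\log)^\flat|\le|\chi*\log|+|(\chi*\log)^\sharp|$ anyway; the paper's proof of the present proposition never puts an absolute value around $\Lambda_\Siegel^\flat$, so the remark is never load-bearing.)

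The paper avoids the issue by not majorizing the exterior factors. It expands $\lambda_\Siegel^\sharp$ as the explicit signed Type I sum \eqref{lss}, and the extra $\Lambda_\Siegel$ (resp.\ $\Lambda_\Siegel^\sharp$) factor via \eqref{leo} (resp.\ \eqref{chilog}) together with the hyperbola swap $d_2\leftrightarrow(n+h_2)/d_2$ keeping the short divisor below $\sqrt{x}$, retaining the resulting $\chi$-twists. After a pigeonhole over the divisor tuple, these $\chi$-twists become the nontrivial $q_\chi$-periodic weight $f$ in Proposition~\ref{two-thirds}(ii), so that the quantity fed to that proposition is $\E_{n\le x}\Lambda_\Siegel^\flat(n+h_1)f(\cdot)$ with the sign intact, rather than $\E_{n\le x}|\Lambda_\Siegel^\flat(n+h_1)|$ times a nonnegative majorant. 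To repair your argument you would need to carry the signed $\chi$-twisted Type I expansions of the exterior factors through the computation instead of replacing them by nonnegative majorants --- which is, in essence, the paper's route. Your majorant bookkeeping (Landreau for $\tau$, \eqref{nid} for $\nu$, the three-piece inspection of \eqref{chilog}) is all reusable in that signed form.
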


\begin{proof}  The claim is trivial for $k=0$, so we may assume that $k\in \{1,2\}$.  
By the triangle inequality and relabeling it suffices to show the bounds
\begin{equation}\label{lb-1}
\E_{n \leq x} \Lambda_\Siegel^\flat(n+h_1) \lambda_\Siegel^\sharp(n+h'_1) \cdots \lambda_\Siegel^\sharp(n+h'_\ell) \approx 0
\end{equation}
when $k=1$, and the bounds
\begin{equation}\label{lb-2}
\E_{n \leq x} \Lambda_\Siegel^\flat(n+h_1) \Lambda_\Siegel(n+h_2) \lambda_\Siegel^\sharp(n+h'_1) \cdots \lambda_\Siegel^\sharp(n+h'_\ell) \approx 0
\end{equation}
and
\begin{equation}\label{lb-3}
\E_{n \leq x} \Lambda_\Siegel^\flat(n+h_1) \Lambda_\Siegel^\sharp(n+h_2) \lambda_\Siegel^\sharp(n+h'_1) \cdots \lambda_\Siegel^\sharp(n+h'_\ell) \approx 0
\end{equation}
when $k=2$.

We begin with~\eqref{lb-1}. Let $\eps>0$ be a sufficiently small quantity. From~\eqref{ls-def} and~\eqref{divisor-bound} we have
\begin{equation}\label{lss}
 \lambda_\Siegel^\sharp(n) = \sum_{d \leq D} b_d 1_{d|n} \chi(n/d)
\end{equation}
for some weights $b_d$ of size 
\begin{equation}\label{bd-bound}
b_d \ll \tau(d) \log^{O(1)} x \ll_\eps x^\eps.
\end{equation}
Since
$$ \sum_{d'_1,\dots,d'_{\ell} \leq D} \frac{1}{[d'_1,\dots,d'_{\ell}]} \leq \sum_{d \leq D^{k'-1}} \frac{\tau(d)^{k'-1}}{d} \ll_\eps x^\eps$$
(thanks to~\eqref{divisor-bound}), we thus see from the pigeonhole principle that the left-hand side of~\eqref{lb-1} is bounded by
$$
\ll_\eps x^{O(\eps)} [d'_1,\dots,d'_\ell] \left| \E_{n \leq x} \Lambda_\Siegel^\flat(n+h_1) \prod_{j=1}^{\ell} 1_{d'_j|n+h'_j} \chi\left(\frac{n+h'_j}{d'_j}\right) \right|$$
for some $d'_1,\dots,d'_{\ell} \leq D$.  By translating (and removing negligible errors) we may assume that $h_1=0$.  Setting $d \coloneqq [d'_1,\dots,d'_{\ell}]$, we see that the constraints $d'_j|n+h'_j$ are either inconsistent, or restrict $n$ to a single residue class $a\ (d)$ with $(a,d) \ll 1$, and then $\prod_{j=1}^{\ell} \chi(\frac{n+h'_j}{d'_j})$ is a $q_\chi$-periodic function of $\frac{n-a}{d}$. Applying Proposition~\ref{two-thirds}(ii) (with a suitable multiple of $\eps$), we bound the left-hand side of~\eqref{lb-1} by
\begin{equation}\label{deps}
 \ll_\eps \frac{x^{O(\eps)} q_\chi^{9/2} d^{3/2}}{x^{1-\eps_0}} + \frac{1}{x^{\eps}}.
\end{equation}
Bounding $d \leq D^{k'-1}$ and using~\eqref{dsmash}, we see that the right-hand side is $\approx 0$ for $\eps$ a sufficiently small constant, giving the claim.

Now we consider~\eqref{lb-2}, \eqref{lb-3}.  Again let $\eps>0$ be sufficiently small. From~\eqref{leo}, \eqref{chilog} and the pigeonhole principle we can bound both left-hand sides (up to negligible errors) by
\begin{align}\label{bo}\begin{split}
 &\ll_\eps x^{O(\eps)} \Bigg| \E_{n \leq x} \Lambda_\Siegel^\flat(n+h_1) \left(\sum_{d_2|n+h_2} \chi(d_2) \Phi_{t}\left(\frac{n+h_2}{d_2} \right)\right) \nu(n+h_2)\\
 &\quad \quad \lambda_\Siegel^\sharp(n+h'_1) \cdots \lambda_\Siegel^\sharp(n+h'_{\ell}) \Bigg|
 \end{split}
\end{align}
for some $1 \ll t \ll x$ (note that the summation vanishes for $t$ outside this range).

We now use a version of the Dirichlet hyperbola method.  First suppose that $t \geq \sqrt{x}$, then the summand vanishes unless $d_2 \ll \sqrt{x}$.  Expanding out using~\eqref{lss}, \eqref{nid} much as before and now using
$$ \sum_{d_2 \ll \sqrt{x}; \tilde d_2 \leq R^2; d'_1,\dots,d'_\ell \leq D} \frac{1}{[d_2,\tilde d_2,d'_1,\dots,d'_{\ell}]} \ll_\eps x^\eps$$
we can bound  the contribution of the $d_2 \leq \sqrt{x}$ case by
$$ \ll_\eps x^{O(\eps)} [d_2,\tilde d_2,d'_1,\dots,d'_\ell] 
\left| \E_{n \leq x} \Lambda_\Siegel^\flat(n+h_1) 1_{d_2,\tilde d_2|n+h_2} \Phi_{t}\left(\frac{n+h_2}{d_2}\right) \prod_{j=1}^{\ell} 1_{d'_j|n+h'_j} \chi\left(\frac{n+h'_j}{d'_j}\right) \right|$$
for some $d_2 \leq \sqrt{x}$, $\tilde d_2 \leq R^2$, and $d'_1,\dots,d'_\ell \leq D$.
Writing $d \coloneqq [d_2,\tilde d_2,d'_1,\dots,d'_{\ell}]$ and arguing as before, using summation by parts to deal with the slowly varying function $\Phi_{t}(\frac{n+h_2}{d_2})$, we can again bound this expression by~\eqref{deps}.  Bounding 
$d \ll \sqrt{x} R^2 D^{k'-2}$ and using~\eqref{dsmash}, we see that the right-hand side is $\approx 0$ for $\eps$ small enough, giving the claim.

Finally, suppose that $t < \sqrt{x}$.  Now we make the change of variables $d^*_2 \coloneqq \frac{n+h_2}{d_2}$ and rewrite the bound as
\begin{align*}&\ll_\eps x^{O(\eps)} \Bigg| \E_{n \leq x} \Lambda_\Siegel^\flat(n+h_1) \left(\sum_{d^*_2|n+h_2} \Phi_t(d^*_2) \chi(\frac{n+h_2}{d^*_2})\right) \nu(n+h_2)\\
&\quad \quad \lambda_\Siegel^\sharp(n+h'_1) \cdots \lambda_\Siegel^\sharp(n+h'_{\ell}) \Bigg|.
\end{align*}
Observe that the summand vanishes unless $d^*_2 \ll \sqrt{x}$.  Now we can repeat the previous arguments (using $d_2^*$ in place of $d_2$, and the $q_\chi$-periodic function $\chi$ in place of $\Phi_t$, noting that~\eqref{dsmash} can handle several additional losses of $q_\chi$) to conclude.
\end{proof}

\section{Fifth step: Computing the Type I correlations}\label{sec:mainterm}

We now execute step (v) of the strategy outlined in the introduction by establishing

\begin{proposition}[Evaluating the Type I correlation]\label{step-5} 
We have
\begin{equation}\label{enx}
\E_{n \leq x} \Lambda_\Siegel^\sharp(n+h_1) \cdots \Lambda_\Siegel^\sharp(n+h_k) \lambda_\Siegel^\sharp(n+h'_1) \cdots \lambda_\Siegel^\sharp(n+h'_\ell) \approx {\mathfrak S}
\end{equation}
where ${\mathfrak S}$ is the quantity in Conjecture~\ref{hlc}.
\end{proposition}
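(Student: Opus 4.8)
The plan is to expand every factor into a Type I sum, discard all terms that carry a nontrivial character $\chi$ of a linear form in $n$ (these will be killed by the Weil-type bound of Lemma \ref{and}), and then evaluate the single surviving ``untwisted'' main term by an Euler product and Fourier computation in the spirit of \cite[Appendix D]{gt-linear}; note that the hypothesis $k\leq 2$ plays no role here. First, if $\ell\geq 1$ we would be done immediately: by \eqref{lss}, \eqref{bd-bound} each $\lambda_\Siegel^\sharp(n+h'_{j'})$ is a Type I sum $\sum_{d'_{j'}\leq D}b_{d'_{j'}}1_{d'_{j'}|n+h'_{j'}}\chi((n+h'_{j'})/d'_{j'})$ with $b_{d'_{j'}}\ll_\eps x^\eps$, so it contributes one factor of $\chi$ evaluated at a linear form in $n$; expanding also each $\Lambda_\Siegel^\sharp(n+h_j)=(\chi*\log)^\sharp(n+h_j)\,\nu(n+h_j)$ into a Type I sum via \eqref{chilog} and \eqref{nid} (the modulus in slot $j$ being $\ll D^2 q_\chi^4 R^2 = x^{O(\eps_0)+o(1)}$ in the range \eqref{x-range}), every resulting term contains at least one such character. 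Grouping these by multiplicativity of $\chi$ and applying Lemma \ref{and} gains a power of $q_\chi$ in each term, and summing against the polynomially bounded coefficients and the harmless $\log t_j$-integrals bounds the whole correlation by $\ll q_\chi^{-1/2+O(\eps)} + \tfrac{D^{2k+\ell} q_\chi^{4k+1} R^{2k} x^{O(\eps)}}{x}\approx 0 = {\mathfrak S}$ by \eqref{rdk0} and \eqref{etao}. Hence we may assume $\ell=0$.

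With $\ell=0$ the case $k=0$ is trivial ($\E_{n\leq x}1 = 1 = {\mathfrak S}$), so assume $k\geq 1$ and split $(\chi*\log)^\sharp$ into its three pieces from \eqref{chilog}. In the first and third pieces $\chi$ is convolved against a smooth weight supported on integers $\ll Dq_\chi^2$, so writing $\chi*\Phi_t(n)=\sum_{m|n}\Phi_t(m)\chi(n/m)$ produces a nontrivial twist $\chi((n+h_j)/m)$, and the corresponding contribution to the $k$-fold correlation is $\approx 0$ by Lemma \ref{and} exactly as above. In the second piece the smooth factor $\Phi_t$ forces the complementary $\chi$-variable below $(Dq_\chi^2)^2$, so writing $\chi*\Phi_t(n)=\sum_{m|n}\chi(m)\Phi_t(n/m)$ places $\chi$ into a bounded coefficient $\chi(m)$ and leaves only smooth weights in $n$. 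It then remains to prove
\[
\E_{n\leq x}\prod_{j=1}^{k}\left(\nu(n+h_j)\int_{Dq_\chi^2}^{\infty}\psi_{\leq (Dq_\chi^2)^2}(x/t_j)\,(\chi*\Phi_{t_j})(n+h_j)\,\log t_j\,\frac{dt_j}{t_j}\right)\approx {\mathfrak S}.
\]

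To evaluate this, we would expand $\nu(n+h_j)=\bigl(\sum_{d|n+h_j}\mu(d)\psi_{\leq R}(d)\bigr)^2$ and $\chi*\Phi_{t_j}(n+h_j)=\sum_{m_j|n+h_j}\chi(m_j)\Phi_{t_j}((n+h_j)/m_j)$, and convert all smooth cutoffs ($\psi_{\leq R}$, $\psi_{\leq(Dq_\chi^2)^2}$, $\Phi_{t_j}$, and the $t_j$-integrals) into Dirichlet monomials via \eqref{fourier}, \eqref{fourier-inverse}, \eqref{log-ident}. After Fubini the $n$-average of the resulting product of divisibility conditions is, by Lemma \ref{chin}, $\tfrac{1}{[\cdots]}1_{\mathrm{consistent}}+O(1/x)$; the $O(1/x)$ part sums (against $\tau^{O(1)}$ weights and the rapidly decaying Fourier weights) to $O\bigl(\tfrac{R^{2k}D^{2(k+\ell)}q_\chi^{4k+1/2}x^{O(\eps)}}{x}\bigr)\approx 0$ by \eqref{rdk0}, and the main term is an absolutely convergent multiple Dirichlet series factoring as an Euler product $\prod_p\mathcal E_p$. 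At a prime $p$ with $\chi(p)=-1$ the $\chi(m_j)$-weighted local factor agrees to leading order (up to $O(1/p^2)$) with the one that $\Lambda\nu$ would produce in place of $\Lambda_\Siegel^\sharp$, so $\prod_{\chi(p)=-1}\mathcal E_p$ reproduces the Hardy--Littlewood local densities, while each exceptional prime $p^*$ contributes a multiplicative factor $1+O(\min(\log^{0.1}\eta\,\log_R p^*,1)/p^*)$ whose aggregate effect is $\ll\log^{-0.3}\eta\approx 0$ by the second bound of Corollary \ref{excep-bound}. Integrating $\prod_p\mathcal E_p$ against the outer Fourier variables as in \cite[Appendix D]{gt-linear} --- using \eqref{fourier-0}, \eqref{fourier-1} to normalize the sieve and smooth-logarithm factors, and \eqref{prod-p}, \eqref{mertens-3}, \eqref{mertens-4} for the resulting contour integrals --- should then collapse everything to $\prod_p\beta_p={\mathfrak S}$ up to an error $\approx 0$, giving the displayed identity and hence the proposition.

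The hard part is this last step: bookkeeping the many Fourier/Dirichlet integration variables (one pair per $\psi_{\leq R}$, one per $\Phi_{t_j}$, plus the $t_j$-integrals), verifying absolute convergence, extracting the Euler product with the correct normalization and matching it to ${\mathfrak S}$, and controlling the exceptional-prime corrections uniformly. The arithmetic inputs are all elementary (the Chinese remainder theorem, Mertens-type estimates, and the Siegel-zero bound of Corollary \ref{excep-bound}); the Fourier-analytic computation is long but essentially routine, following \cite[Appendix D]{gt-linear}.
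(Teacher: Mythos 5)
Your plan is, at the level of strategy, the paper's own: expand everything into Type I sums; use the Weil bound of Lemma~\ref{and} to discard every term carrying a factor $\chi$ evaluated at a linear form in $n$ (which handles all of $\ell>0$, and the first and third pieces of $(\chi*\log)^\sharp$ when $\ell=0$); and then evaluate the surviving Goldston--Y{\i}ld{\i}r{\i}m--type correlation $\E_{n\leq x}\prod_j\sum_{d|n+h_j}\Psi_{d}(n+h_j)$ by an Euler-product/Fourier computation in the spirit of \cite[Appendix D]{gt-linear}, with Corollary~\ref{excep-bound} controlling the exceptional primes. You also correctly observe that $k\leq 2$ is not used here. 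However, as you yourself flag, the last step is where almost all of the actual proof lives, and your proposal defers it. The paper's execution of that step involves several nontrivial moves not present in your sketch: (a) reducing via Lemma~\ref{chin} and summation by parts to a \emph{pointwise} estimate \eqref{dddr} uniformly for $y\in[x^{1-\eps_0^2},x]$ (the restriction to large $y$ is not cosmetic --- it is what makes the normalizing identity \eqref{Fj-ident}, $\int_\R F_j(t)(1+it)\,dt=1$, hold exactly, because only then does $\psi(\log(x/t)/(2\log Dq^2_\chi))=1$ on the support of $\phi'(\log((y+h_j)/t))$); (b) a careful Taylor analysis of the local factors $E_{p,t}$, first truncating to $|t|\leq\log^{1/(100k)}\eta$, then comparing $c_{p,t}$ to a $\chi$-free proxy $c'_{p,t}$ with the exceptional-prime error \eqref{cpl-1} fed into Corollary~\ref{excep-bound}; and (c) a Cauchy-integral argument together with \eqref{prod-p} to renormalize $\prod_p E_p$ to ${\mathfrak S}\log^{-k}x\prod_j(1+it_{0,j})$ before the final integration against $F_j,f$. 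A minor inaccuracy worth noting: the modulus in slot $j$ is $\ll R^2(Dq_\chi^2)^2$, which in the range \eqref{x-range} can be as large as $x^{4/(10k+1/2)+o(1)}$, not $x^{O(\eps_0)+o(1)}$ --- this does not affect the conclusion since the $1/x$ error is still absorbed by \eqref{rdk0}, but it means the Weil-bound step is less generous than your estimate suggests. In short: right road map, same route as the paper, but the destination is not reached without the deferred computation.
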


Clearly Theorem~\ref{main} follows immediately from concatenating together Propositions~\ref{step-1}, \ref{step-2}, \ref{step-3}, \ref{step-4}, \ref{step-5} using~\eqref{chain}.

We first dispose of the easy case $\ell > 0$, in which ${\mathfrak S}$ vanishes.  For $1 \leq j \leq k$, we see from~\eqref{chilog} and replacing $d$ by $n/d$ in the first and third factors, and truncating the very small or very large values of $t$ (where the summand vanishes) that
\begin{equation}\label{chilog-alt}
\begin{split}
 (\chi*\log)^\sharp(n) &= \int_{1/100}^{Dq^2_\chi}\sum_{d|n} \Phi_t(d)\chi(n/d) \log t \frac{dt}{t} \\
&\quad + \int_{Dq^2_\chi}^{100x} \psi_{\leq (Dq^2_\chi)^2}(x/t) \sum_{d\mid n}\Phi_t(n/d)\chi(d) \log t \frac{dt}{t} \\
&\quad + \int_{Dq^2_\chi}^{100x} \psi_{> (Dq^2_\chi)^2}(x/t) \sum_{d\mid n}\Phi_{Dq^2_\chi}(d)\chi(n/d) \log t \frac{dt}{t}.
\end{split}
\end{equation}
In all of these terms, the summands vanish unless $d \ll (D q^2_\chi)^2$.  One can then write
$$  (\chi*\log)^\sharp(n) = \sum_{d \ll (D q^2_\chi)^2: d|n}( \Psi(n/d) \chi(d) + c_d \chi(\frac{n}{d})),$$
where $\Psi \colon \R^+ \to \R$ is the smooth function
\begin{equation}\label{naq}
 \Psi(y) \coloneqq \int_{Dq^2_\chi}^{100x} \psi_{\leq (Dq^2_\chi)^2}(x/t)  \Phi_t(y) \log t \frac{dt}{t}
\end{equation}
and $c_d$ is the coefficient
\begin{align*}
c_d &\coloneqq \int_{1/100}^{Dq^2_\chi}  \Phi_t(d) \log t \frac{dt}{t} \\
&\quad + \int_{Dq^2_\chi}^{100x} \psi_{> (Dq^2_\chi)^2}(x/t) \Phi_{Dq^2_\chi}(d) \log t \frac{dt}{t}.
\end{align*}
For the current analysis we will need the crude bound
$$ \| \Psi \|_{\mathrm{TV}} \ll \log^{O(1)} x, c_d\ll \log^{O(1)}x,$$
where we use the total variation norm
$$ \|f\|_{\mathrm{TV}} \coloneqq \sup_{y>0} |f(y)| + \int_\R |f'(y)|\ dy.$$

Combining this with the expansion~\eqref{nid}, we see that
\begin{equation}\label{lsn}
  \Lambda^\sharp(n) = \sum_{d \ll R^2 (D q^2_\chi)^2: d|n} \Psi_d(n) + \sum_{d \ll R^2 (D q^2_\chi)^2: d'|d|n} g_{d,d'} \chi(\frac{n}{d'})
	\end{equation}
where $\Psi_d \colon \R^+ \to \R$ is a smooth function and $g_{d,d'}$ is a coefficient obeying the bounds
\begin{equation}\label{psid-tv}
 \| \Psi_d \|_{\mathrm{TV}}\ll \tau(d)^{O(1)}, g_{d,d'} \ll \tau(d)^{O(1)} \log^{O(1)} x.
\end{equation}
Using the decomposition~\eqref{lss} to expand $\lambda_\Siegel^\sharp(n+h'_j)$, we can thus write $\Lambda_\Siegel^\sharp(n+h_1) \cdots \Lambda_\Siegel^\sharp(n+h_k) \lambda_\Siegel^\sharp(n+h'_1) \cdots \lambda_\Siegel^\sharp(n+h'_\ell)$ as
\begin{align*} &\sum_{J \subset \{1,\dots,k\}}\,\, \sum_{d_1,\dots,d_k \ll R^2 (D q^2_\chi)^2; d_{k+1},\dots,d_{k+\ell} \leq D; d'_j|d_j \forall j \in J}
h_{d_1,\dots,d_{k+\ell},d'_1,\dots,d'_k}(n)\\
&\quad \quad \left(\prod_{j=1}^{k+\ell} 1_{d|n+h_j}\right) \prod_{j \in J \cup \{k+1,\dots,k+\ell\}} \chi\left(\frac{n+h_j}{d'_j}\right)
\end{align*}
for some smooth functions $h_{d_1,\dots,d_{k+\ell},d'_1,\dots,d'_k} \colon \R \to \R$ with
$$ \| h_{d_1,\dots,d_{k+\ell},d'_1,\dots,d'_k}\|_{\mathrm{TV}} \ll \tau(d_1)^{O(1)} \cdots \tau(d_k)^{O(1)} \log^{O(1)} x,$$
and with the convention that $h_{k+j} = h'_j$ and $d'_{k+j} = d_{k+j}$ for $j=1,\dots,\ell$.  From Lemma~\ref{and}(ii) and summation by parts to deal with the 
$h_{d_1,\dots,d_{k+\ell},d'_1,\dots,d'_k}$ coefficients, we may thus bound the left-hand side of~\eqref{enx} by
\begin{align*}
&\ll_\eps q_\chi^{1/2+\eps} \log^{O(1)} x \sum_{J \subset \{1,\dots,k\}}\,\, 
\sum_{d_1,\dots,d_k \ll R^2 (D q^2_\chi)^2; d_{k+1},\dots,d_{k+\ell} \leq D; d'_j|d_j \forall j \in J}\\
&\quad \quad (d_1 \cdots d_k, q_\chi)^{1/2} \tau(d_1)^{O(1)} \cdots \tau(d_k)^{O(1)}\left( \frac{1}{q_\chi d_1 \cdots d_k} + \frac{1}{x} \right)
\end{align*}
which on evaluating the $d'_j$ sums, and then writing $d \coloneqq d_1 \cdots d_k$, can be bounded by
\begin{equation}\label{eqc}
 \ll_\eps q_\chi^{1/2+\eps} \log^{O(1)} x \sum_{d \ll D^{2(k+\ell)} (R q_\chi^2)^{2k}} (d,q_\chi)^{1/2} \tau(d)^{O(1)} \left( \frac{1}{q_\chi d} + \frac{1}{x} \right).
\end{equation}
From~\eqref{rdk0} we see that
$$ d \ll D^{2k(k+\ell)} (R q_\chi^2)^{2k} \ll \frac{x}{q^{1/2+\eps_0/2}_\chi} $$
so that
$$ \frac{1}{q_\chi d} + \frac{1}{x} \ll \frac{1}{q_\chi^{1/2+\eps_0/2} d}.$$
and then by~\eqref{euler-stop} we can bound~\eqref{eqc} by
\begin{align}\label{tausum}
\ll_\eps q_\chi^{-\frac{\eps_0}{2}+\eps} \log^{O(1)} x \prod_{p \leq x} \sum_{d \in \N_{(p)}} \frac{(d,q_\chi)^{1/2} \tau(d)^{O(1)}}{d}.
\end{align}
One can calculate
$$ \sum_{d \in \N_{(p)}} \frac{(d,q_\chi)^{1/2} \tau(d)^{O(1)}}{d} \leq 1 + O(\frac{1}{p})$$
when $p \nmid q_\chi$ and
$$ \sum_{d \in \N_{(p)}} \frac{(d,q_\chi)^{1/2} \tau(d)^{O(1)}}{d} \ll 1$$
otherwise, thus by~\eqref{mertens-4} the preceding expression~\eqref{tausum} is
$$
\ll_\eps q_\chi^{-\frac{\eps_0}{2}+\eps} \tau(q_\chi)^{O(1)} \log^{O(1)} x$$
which by~\eqref{divisor-bound} is
$$
\ll_\eps q_\chi^{-\frac{\eps_0}{4}} \log^{O(1)} x$$
if $\eps$ is small enough. Applying~\eqref{etao} we conclude that
$$ \E_{n \leq x} \Lambda_\Siegel^\sharp(n+h_1) \cdots \Lambda_\Siegel^\sharp(n+h_k) \lambda_\Siegel^\sharp(n+h'_1) \cdots \lambda_\Siegel^\sharp(n+h'_\ell)  \approx 0.$$
This concludes the treatment of the $\ell > 0$ case.

Now suppose that $\ell=0$.  The above arguments allow us to dispose of the $g_{d,d'}$ contributions in~\eqref{lsn}, leaving us with the task of showing that
$$ \E_{n \leq x} \prod_{j=1}^k \sum_{d_j \ll R^2 (D q^2_\chi)^2: d_j|n+h_j} \Psi_{d_j}(n+h_j) \approx {\mathfrak S}.$$
This is a correlation of Goldston--Y{\i}ld{\i}r{\i}m type and can be calculated by a lengthy but straightforward calculation, basically a more careful variant of Lemma~\ref{nusieve}.  We follow the Fourier-analytic method laid out in~\cite[Appendix D]{gt-linear}, as follows. Using Lemma~\ref{chin},~\eqref{psid-tv}, and summation by parts, we can write the left-hand side here as
\begin{align*}&\sum_{d_1,\dots,d_k \ll R^2 (D q^2_\chi)^2}\Bigg(
\frac{\prod_{1 \leq i < j \leq k} 1_{(d_i,d_j)|h_i-h_j}}{[d_1,\dots,d_k]} \frac{1}{x} \int_0^x \prod_{j=1}^k \Psi_{d_j}(y+h_j)\ dy\\
&\quad \quad + O\left( \frac{\tau(d_1)^{O(1)} \cdots \tau(d_k)^{O(1)} \log^{O(1)} x}{x} \right)\Bigg).
\end{align*}
Using~\eqref{divisor-bound}, the contribution of the error term is at most
$$ \ll_\eps \frac{(R D q^2_\chi)^{2k} x^\eps}{x}$$
for any $\eps>0$, which is $\approx 0$ for $\eps$ small enough thanks to~\eqref{rdk0}.  Thus it remains to show that
$$ \sum_{d_1,\dots,d_k \ll (R D q^2_\chi)^{2k}}
\frac{\prod_{1 \leq i < j \leq k} 1_{(d_i,d_j)|h_i-h_j}}{[d_1,\dots,d_k]} \frac{1}{x} \int_0^x \prod_{j=1}^k \Psi_{d_j}(y+h_j)\ dy
\approx {\mathfrak S}.$$
The contribution of those $y$ with $y \leq x^{1-\eps^2_0}$ is bounded by
$$ x^{-\eps^2_0} \log^{O(1)} x \sum_{d_1,\dots,d_k \ll R^2 (D q^2_\chi)^2}
\frac{\prod_{1 \leq i < j \leq k} 1_{(d_i,d_j)|h_i-h_j}}{[d_1,\dots,d_k]} \tau(d_1)^{O(1)} \cdots \tau(d_k)^{O(1)}.$$
Bounding $1_{(d_i,d_j)|h_i-h_j}{[d_1,\dots,d_k]} \ll \frac{1}{d_1 \cdots d_k}$ and using~\eqref{divisor-bound} we see that this contribution is $\approx 0$.  Thus it will suffice to establish the pointwise bound
\begin{equation}\label{dddr}
 \sum_{d_1,\dots,d_k \ll R^2 (D q^2_\chi)^2}
\frac{\prod_{1 \leq i < j \leq k} 1_{(d_i,d_j)|h_i-h_j}}{[d_1,\dots,d_k]} \prod_{j=1}^k \Psi_{d_j}(y+h_j)
\approx {\mathfrak S} 
\end{equation}
for all $x^{1-\eps^2_0} \leq y \leq x$.  Note that the restrictions on $d_1,\dots,d_k$ can be dropped thanks to the support of the $\Psi_{d_j}$.

By construction we have
\begin{equation}\label{psidy}
 \Psi_d(y+h_j) = \sum_{d_0,d_1,d_2: [d,d_1,d_2]=d} \chi(d_0) \mu(d_1) \mu(d_2) \Phi\left(\frac{y+h_j}{d_0}\right) \psi_{\leq R}(d_1) \psi_{\leq R}(d_2)
\end{equation}
This function is not multiplicative in $d$, but it can be Fourier expanded as a linear combination of multiplicative functions:

\begin{lemma}[Fourier expansion]  We have
\begin{equation}\label{psir-expand}
 \psi_{\leq R}(d) = \int_\R \frac{1}{d^{\frac{1+it}{\log R}}} f(t)\ dt
\end{equation}
and
\begin{equation}\label{phir-expand}
 \Phi\left(\frac{y+h_j}{d}\right) = \log x \int_\R \frac{1}{d^{\frac{1+it}{\log x}}} F_j(t)\ dt
\end{equation}
for all real $d \geq 1$, and some measurable functions $f,F_j \colon \R \to \C$ obeying the decay estimates
\begin{equation}\label{f-decay}
 f(t) \ll_m (1+|t|)^{-m}
\end{equation}
and
\begin{equation}\label{t-decay}
 F_j(t) \ll_m (1+|t|)^{-m} \
\end{equation}
for all $t \in \R$ and $m \geq 0$, as well as the identities
\begin{equation}\label{f-inc}
\int_\R f(t)\ dt = 1.
\end{equation}
and
\begin{equation}\label{Fj-ident}
 \int_\R F_j(t) (1+it)\ dt = 1.
\end{equation}
\end{lemma}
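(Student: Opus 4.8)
The plan is to read off both expansions directly from the packaged Fourier representation \eqref{fourier} and its specialisations \eqref{fourier-0}, \eqref{fourier-1}; the only non-formal ingredient is the elementary observation that the smooth function $\Phi$ occurring in \eqref{psidy} — which is the function $\Psi$ of \eqref{naq}, after the rearrangement \eqref{chilog-alt} — agrees with $m\mapsto\log m$ on a neighbourhood of each point $y+h_j$, in the range of $y$ relevant to \eqref{dddr}. The first expansion \eqref{psir-expand} is immediate: by \eqref{psia}, $\psi_{\leq R}(d)=\psi(\log_R d)$, and since $\psi$ is smooth and supported on $[-1,1]$ the function $u\mapsto e^u\psi(u)$ is smooth with compact support, hence Schwartz; \eqref{fourier} with $\varphi=\psi$, $z=R$ then gives \eqref{psir-expand} with $f(t)=\frac{1}{2\pi}\int_\R e^{(1+it)u}\psi(u)\,du$, which is a multiple of the Fourier transform of a Schwartz function, hence Schwartz, yielding \eqref{f-decay}, while \eqref{fourier-0} together with $\psi(0)=1$ gives \eqref{f-inc}.

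For \eqref{phir-expand} I would fix $j\in\{1,\dots,k\}$ and set $\varphi_j(u):=\Phi((y+h_j)x^{-u})$. As $\Phi$ is smooth and supported in a fixed compact subinterval of $(0,\infty)$, the set of $u$ for which $(y+h_j)x^{-u}$ lies in the support of $\Phi$ is bounded, so $\varphi_j$ is smooth with compact support and $u\mapsto e^u\varphi_j(u)$ is Schwartz. Applying \eqref{fourier} with $\varphi=\varphi_j$, $z=x$ and using $\varphi_j(\log_x d)=\Phi((y+h_j)/d)$ gives $\Phi((y+h_j)/d)=\int_\R d^{-\frac{1+it}{\log x}}g_j(t)\,dt$ with $g_j(t)=\frac{1}{2\pi}\int_\R e^{(1+it)u}\varphi_j(u)\,du$; setting $F_j:=g_j/\log x$ yields \eqref{phir-expand}, and $F_j$ is Schwartz, which is \eqref{t-decay}.

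To verify \eqref{Fj-ident}, apply \eqref{fourier-1} to $\varphi_j$ to get $\int_\R(1+it)g_j(t)\,dt=-\varphi_j'(0)$, and differentiate the definition of $\varphi_j$ to get $\varphi_j'(0)=-(\log x)(y+h_j)\Phi'(y+h_j)$; hence $\int_\R(1+it)F_j(t)\,dt=(y+h_j)\Phi'(y+h_j)$, so \eqref{Fj-ident} reduces to showing $(y+h_j)\Phi'(y+h_j)=1$. By \eqref{naq} and \eqref{log-ident}, $\Phi(m)=\int_0^\infty\phi(\log(m/t))\log t\,\frac{dt}{t}=\log m$ whenever the cutoffs $\psi_{\leq(Dq_\chi^2)^2}(x/t)$ and $1_{[Dq_\chi^2,\,100x]}(t)$ are identically $1$ over the interval $t\in[m/e,em]$ on which $\Phi_t(m)$ is supported. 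A direct check of these support conditions, using $x^{1-\eps_0^2}\leq y\leq x$ and the fact that $D=x^{\eps_0/(10(k+\ell))}\leq Dq_\chi^2$ is a power of $x$ bounded away from $x^{1/2}$ (from \eqref{D-def}, \eqref{x-range}, valid since $k\geq1$ in the application), shows that this holds for all $m$ in a neighbourhood of $y+h_j$ once $\eps_0$ is small enough in terms of $k,\ell$; hence $\Phi$ coincides with $\log$ there, $\Phi'(y+h_j)=1/(y+h_j)$, and \eqref{Fj-ident} follows.

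The two appeals to \eqref{fourier} are routine; the step I expect to require the most care — and hence the main obstacle — is the last one, confirming that none of the smooth or sharp truncations built into $\Phi$ perturb it away from $\log m$ near $y+h_j$, since this is exactly what makes the normalisation in \eqref{Fj-ident} come out as $1$ rather than some $y$-dependent constant.
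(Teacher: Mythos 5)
Your treatment of \eqref{psir-expand}, \eqref{f-decay}, \eqref{f-inc}, \eqref{phir-expand}, and \eqref{Fj-ident} follows the same route as the paper and is correct (including the crucial observation that the cutoffs built into $\Phi$ are identically $1$ near $y+h_j$, so that $\Phi'(y+h_j)=1/(y+h_j)$). The gap is in \eqref{t-decay}. You assert that because $\varphi_j(u):=\Phi((y+h_j)x^{-u})$ is smooth with compact support, $F_j:=g_j/\log x$ is Schwartz, ``which is \eqref{t-decay}.'' But $\Phi$ is \emph{not} a fixed function: its support is roughly $[x/(e(Dq_\chi^2)^2),\,100ex]$ and its definition involves $x$, $D$, $q_\chi$, $y$, so both the support of $\Phi$ and the derivatives of $\varphi_j$ grow with $x$ (indeed $\|\varphi_j\|_\infty\asymp\log x$, and each $\partial_u$ introduces factors of $\log x$ from the chain rule). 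Saying ``$F_j$ is Schwartz'' therefore gives $F_j(t)\ll_{m,x}(1+|t|)^{-m}$ at best; the content of \eqref{t-decay} is that the implied constants are \emph{uniform} in $x$, $q_\chi$, $\eta$, $y$, and this you have not verified. (Also, your phrase ``supported in a fixed compact subinterval of $(0,\infty)$'' is literally false for $\Phi$ itself; what is true is that after passing to the variable $u$, the support of $\varphi_j$ lies in a fixed interval of length $O(1)$.)

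This gap is fillable along your lines, but it is exactly the point the paper is careful about. After changing variables in \eqref{naq}, the paper writes $F_j(t)$ as $\frac{\log x}{2\pi}$ times a \emph{product} of two integrals: one over $v$ involving $\phi$, which is $O(1/\log x)$ in size because the $v$-support has width $O(1/\log x)$ (this supplies the normalisation but no $t$-decay); and one over $s$ involving $\psi\bigl(\frac{\log x}{2\log(Dq_\chi^2)}s\bigr)(1-s)$, which has $s$-support and derivatives of all orders bounded by $O_m(1)$ \emph{because} $\frac{\log x}{2\log(Dq_\chi^2)}\asymp 1$ by \eqref{D-def} and \eqref{x-range}, hence decays as $\ll_m(1+|t|)^{-m}$ with uniform constants. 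The product then gives \eqref{t-decay}. If you prefer not to factor, you must check directly that $\int_\R e^u|\varphi_j^{(m)}(u)|\,du\ll_m\log x$ with constants independent of $x$; this reduces to the same observation, that the $\psi$-cutoff in \eqref{naq} varies on a $u$-scale $\asymp 1$ (thanks to \eqref{D-def}) while the $\phi$-factor contributes a window of $u$-width $O(1/\log x)$ times a $\log^2 x$ prefactor. Either way, the uniformity needs to be stated and justified; as written your argument does not do so.
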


\begin{proof}  From~\eqref{psia} and~\eqref{fourier} we obtain~\eqref{psir-expand} with
$$ f(t) \coloneqq \frac{1}{2\pi} \int_\R e^{(1+it) u} \psi(u)\ du $$
the Fourier transform of $u \mapsto e^u \psi(u)$. From repeated integration by parts we have the rapid decrease~\eqref{f-decay}, while
from~\eqref{fourier-0} we have
$$
\int_\R f(t)\ dt = e^0 \psi(0) = 1$$
giving~\eqref{f-inc}.

Next, from~\eqref{naq} we have
$$ \Phi\left(\frac{y+h_j}{d}\right) = \int_0^{\infty} \psi\left(\frac{\log(x/t)}{2\log (Dq^2_\chi)}\right) \Phi_{dt}(y+h_j) \log t \frac{dt}{t}.$$
for any natural number $d$.  Writing $s = \frac{\log(x/t)}{\log x}$, we can rewrite this as
$$ \Phi\left(\frac{y+h_j}{d}\right) = \log^2 x \int_\R \psi\left(\frac{\log x}{2\log (Dq^2_\chi)} s\right) \phi\left(\log \frac{y+h_j}{x} + s \log x - \log d\right) (1 - s)\ ds.$$
By Fourier inversion~\eqref{fourier} we then have~\eqref{phir-expand}
where
$$ F_j(t) \coloneqq \frac{\log x}{2\pi} \int_\R e^{(1+it) u} \int_\R \psi\left(\frac{\log x}{2\log (Dq^2_\chi)} s\right) \phi\left(\log \frac{y+h_j}{x} + (s - u) \log x\right) (1 - s)\ ds du,$$
which on making the change of variables $v \coloneqq u-s$ factors as
$$ F_j(t) = \frac{\log x}{2\pi} \left(\int_\R e^{(1+it) v} \phi\left(\log \frac{y+h_j}{x} - v \log x\right) \ dv\right) \left(\int_\R e^{(1+it) s} \psi\left(\frac{\log x}{2\log (Dq^2_\chi)} s\right) (1-s)\ ds\right).$$
From the triangle inequality one has
$$ \int_\R e^{(1+it) v} \phi\left(\log \frac{y+h_j}{x} - v \log x\right) \ dv \ll \frac{1}{\log x}$$
while from integration by parts (and~\eqref{D-def}) one has
$$ \int_\R e^{(1+it) s} \psi\left(\frac{\log x}{2\log (Dq^2_\chi)} s\right) (1-s)\ ds \ll_m (1+|t|)^{-m}$$
for any $m \geq 0$, thus yielding~\eqref{t-decay}.  Also, from~\eqref{fourier-1}, and integration by parts one has
\begin{align*}
 \int_\R F_j(t) (1+it)\ dt &= - \frac{d}{dx} \Phi\left(\frac{y+h_j}{x}\right)|_{x=0} \\
&= (y+h_j) \Phi'(y+h_j) \\
&= \int_0^{\infty} \psi\left(\frac{\log(x/t)}{2\log (Dq^2_\chi)}\right) \phi'\left(\log \frac{y+h_j}{t}\right) \log t \frac{dt}{t} \\
&=  \int_0^{\infty} \phi'\left(\log \frac{y+h_j}{t}\right) \log t \frac{dt}{t} \\
&= \int_\R \phi'(u) (\log(y + h_j) - u)\ du \\
&= \int_\R \phi(u)\ du \\
&= 1
\end{align*}
where we have used the observation that $\psi(\frac{\log(x/t)}{2\log (Dq^2_\chi)})$ equals to $1$ on the support of $\phi'(\log \frac{y+h_j}{t})$ (since one then has $x/t \asymp x/y \ll x^{\eps^2_0}$). This gives~\eqref{Fj-ident}.
\end{proof}

Inserting the expansions~\eqref{psir-expand}, \eqref{phir-expand} back into~\eqref{psidy}, we see that
$$
 \Psi_d(y+h_j) = \log^k x \int_\R \int_\R \int_\R \sum_{d_0,d_1,d_2: [d,d_1,d_2]=d} \frac{\chi(d_0) \mu(d_1) \mu(d_2)}{d_0^{\frac{1+it_0}{\log x}} d_1^{\frac{1+it_1}{\log R}} d_2^{\frac{1+it_2}{\log R}}}\ F_j(t_0) f(t_1) f(t_2) dt_0 dt_1 dt_2.$$
Inserting this back into the left-hand side of~\eqref{dddr} and factoring the Euler product using~\eqref{euler}, we can thus write that left-hand side as
\begin{equation}\label{ep-int}
 \log^k x \int_{\R^{3k}} \prod_p E_{p,t_{0,1},\dots,t_{2,k}}\ \prod_{j=1}^k F_j(t_{0,j}) f(t_{1,j}) f(t_{2,j}) dt_{0,j} dt_{1,j} dt_{2,j},
\end{equation}
where
\begin{equation}\label{ept}
E_{p,t_{0,1},\dots,t_{2,k}} \coloneqq \sum_{d_{1},\dots,d_{k} \in \N_{(p)}}
\frac{\prod_{1 \leq i < j \leq k} 1_{(d_i,d_j)|h_i-h_j}}{[d_1,\dots,d_k]} \prod_{j=1}^k c_{d_j, t_{0,j}, t_{1,j}, t_{2,j}},
\end{equation}
and
\begin{equation}\label{cdt-def}
 c_{d,t_0,t_1,t_2} \coloneqq \sum_{d_0,d_1,d_2: [d_0,d_1,d_2]=d}
\frac{\chi(d_{0}) \mu(d_{1}) \mu(d_{2})}{d_{0}^{\frac{1+it_{0}}{\log x}} d_{1}^{\frac{1+it_{1}}{\log R}} d_{2}^{\frac{1+it_{2}}{\log R}}}.
\end{equation}
From the triangle inequality one has the crude bound
\begin{equation}\label{log}
 E_{p,t_{0,1},\dots,t_{2,k}} = 1 + O\left( \frac{1}{p^{1+1/\log R}} \right)
\end{equation}
and thus by Mertens' theorem~\eqref{mertens-4}
$$ \prod_p E_{p,t_{0,1},\dots,t_{2,k}} \ll \log^{O(1)} R.$$
Using~\eqref{f-decay}, \eqref{t-decay} we see that the contribution of the integral in which the quantity
$$ |t| \coloneqq \sup_{0 \leq i \leq 2; 1 \leq j \leq k} |t_{i,j}| $$
exceeds (say) $\log^{1/10} R$ is negligible.  Thus we may restrict attention to the regime
$$
 |t| \leq \log^{1/10} R.
$$
We can improve the above analysis to restrict the region of $t$ further.  From Taylor expansion one has the more precise bound
$$ E_{p,t_{0,1},\dots,t_{2,k}} = 1 - \frac{k}{p} + O\left( \frac{(1+|t|)^3 \log_R p}{p} \right) + O( \frac{1}{p^2} )$$
when $p \leq R$.  Using this bound in place of~\eqref{log} when $\log p \leq (1+|t|)^{-3} \log R$ and using Mertens' theorem~\eqref{mertens}, \eqref{mertens-3}, we obtain the refined estimate
\begin{equation}\label{ppc}
 \prod_{p \geq C} E_{p,t_{0,1},\dots,t_{2,k}} \ll_C (1+|t|)^{O(1)} \log^{-k} R
\end{equation}
for any $C \geq 1$.
Using~\eqref{f-decay}, \eqref{t-decay} we see that the contribution of the integral in which $|t| \geq \log^{1/(100k)} \eta$ (say) is negligible.  Thus we may restrict attention to the regime
\begin{equation}\label{tank}
 |t| \leq \log^{1/(100k)} \eta.
\end{equation}

We now perform an even more precise analysis of the Euler factors $E_{p,t_{0,1},\dots,t_{2,k}}$.
Let us first suppose that $p$ is larger than $C_0$ for some sufficiently large $C_0$ (depending on $h_1,\dots,h_k,k$).  Then $p$ does not divide $\prod_{1 \leq i < j \leq k}(h_i-h_j)$.  Thus in order for the sum in~\eqref{ept} to be non-zero, at most one of the $d_j$ can be greater than $1$, and hence
$$
 E_{p,t_{0,1},\dots,t_{2,k}} = 1 + \sum_{j=1}^k \sum_{l=1}^\infty \frac{c_{p^l, t_{0,j}, t_{1,j}, t_{2,j}}}{p^l}.$$
We expand $c_{p^l, t_{0,j}, t_{1,j}, t_{2,j}}$ using~\eqref{cdt-def}.  For $l>1$, the sum in~\eqref{cdt-def} only consists of those terms with $d_0=p^l$, and thus 
$$ c_{p^l, t_{0,j}, t_{1,j}, t_{2,j}} = \frac{\chi(p^l)}{p^{l(1+\frac{1+it_{0,j}}{\log x})}}
\left( 1 - \frac{1}{p^{\frac{1+it_{1,j}}{\log R}}} \right) \left( 1 - \frac{1}{p^{\frac{1+it_{2,j}}{\log R}}} \right).$$
In particular, from Taylor expansion, we have
\begin{equation}\label{cpl}
 c_{p^l, t_{0,j}, t_{1,j}, t_{2,j}}  \ll \min((1+|t|) \log_R p,1)^2 ).
\end{equation}
For $l=1$, the sum in~\eqref{cdt-def} consists of those terms with $d_0,d_1,d_2 \in \{1,p\}$, excluding the triple $d_0=d_1=d_2=0$, thus 
$$ c_{p,t_0,t_1,t_2} = \left( 1 + \frac{\chi(p)}{p^{\frac{1+it_{0}}{\log x}}} \right)
\left( 1 - \frac{1}{p^{\frac{1+it_{1}}{\log R}}} \right) \left( 1 - \frac{1}{p^{\frac{1+it_{2}}{\log R}}} \right) - 1.$$
We thus have
\begin{align*}
 E_{p,t_{0,1},\dots,t_{2,k}} &= 1 + \sum_{j=1}^k \frac{c_{p,t_{0,j},t_{1,j},t_{2,j}}}{p} 
\left( 1 - \frac{1}{p^{\frac{1+it_{1,j}}{\log R}}} \right) \left( 1 - \frac{1}{p^{\frac{1+it_{2,j}}{\log R}}} \right) - \frac{1}{p} \\
&\quad + O( \frac{\min((1+|t|)\log_R p,1)^2 }{p^2} ).
\end{align*}
For $p \geq C_0$, we may use the trivial bound $c_{p,t_{0,j},t_{1,j},t_{2,j}} \ll 1$ to factor
$$
 E_{p,t_{0,1},\dots,t_{2,k}} = \left(1 + \sum_{j=1}^k \frac{c_{p,t_{0,j},t_{1,j},t_{2,j}}}{p}\right) \exp\left(O\left( \frac{\min((1+|t|)\log_R p,1)^2 }{p^2} \right) \right).
$$
Since
$$ \sum_p \frac{\min((1+|t|)\log_R p,1)^2 }{p^2} \ll
\frac{(1+|t|)^2}{\log^2 R} \sum_p \frac{1}{p^{3/2}} \ll \frac{1}{\log R} $$
we thus have
$$\prod_{p \geq C_0} E_{p,t_{0,1},\dots,t_{2,k}} = \exp\left( O\left( \frac{1}{\log R} \right) \right) \prod_{p \geq C_0} \left(1 + \sum_{j=1}^k \frac{c_{p,t_{0,j},t_{1,j},t_{2,j}}}{p}\right) .$$
Let us compare $c_{p,t_{0,j},t_{1,j},t_{2,j}}$ against the quantity
$$ c'_{p,t_{0,j},t_{1,j},t_{2,j}} \coloneqq \left( 1 - \frac{1}{p^{\frac{1+it_{0,j}}{\log x}}} \right)
\left( 1 - \frac{1}{p^{\frac{1+it_{1,j}}{\log R}}} \right) \left( 1 - \frac{1}{p^{\frac{1+it_{2,j}}{\log R}}} \right) - 1.$$
The two quantities agree unless $p$ is exceptional.  From the triangle inequality we have the crude bound
$$ c_{p,t_{0,j},t_{1,j},t_{2,j}} - c'_{p,t_{0,j},t_{1,j},t_{2,j}} \ll \frac{1}{p^{\frac{1}{\log x}}},$$
and when $p \leq x$ we can use Taylor expansion and~\eqref{tank} to also obtain the bound
$$ c_{p,t_{0,j},t_{1,j},t_{2,j}} - c'_{p,t_{0,j},t_{1,j},t_{2,j}} \ll (\log^{1/(100k)} \eta \log_R p)^2.$$
Thus in all cases one has the bound
\begin{equation}\label{cpl-1}
 c_{p,t_{0,j},t_{1,j},t_{2,j}} - c'_{p,t_{0,j},t_{1,j},t_{2,j}} \ll 1_{p\textnormal{ exceptional }} \frac{(\log^{1/(100k)} \eta \log_R p)^2}{p^{\frac{1}{\log x}}}.
\end{equation}
Applying Corollary~\ref{excep-bound}, we conclude that
$$\prod_{p \geq C_0} E_{p,t_{0,1},\dots,t_{2,k}} = \exp\left( O\left( \frac{1}{\log^{1/(7k)} \eta} \right) \right) \prod_{p \geq C_0} \left(1 + \sum_{j=1}^k \sum_{p \geq C_0} \frac{c'_{p,t_{0,j},t_{1,j},t_{2,j}}}{p} \right).$$
Bounding $\frac{1}{p^{\frac{1+it_{1,j}}{\log R}}}, \frac{1}{p^{\frac{1+it_{2,j}}{\log R}}} = O( \exp(-\log_R p) )$ we have
$$ c'_{p,t_{0,j},t_{1,j},t_{2,j}}  = - \frac{1}{p^{\frac{1+it_{0,j}}{\log x}}}  + O( \exp(-2\log_R p) ),$$
while from Taylor expansion we see for $p \leq x$ that
\begin{equation}\label{cpot}
\begin{split}
 c'_{p,t_{0,j},t_{1,j},t_{2,j}} &= -1 + O( (1 + |t|) \log_x p )\\
&=  - \frac{1}{p^{\frac{1+it_{0,j}}{\log x}}}  + O( (1 + |t|) \log_x p ) \\
&=  - \frac{1}{p^{\frac{1+it_{0,j}}{\log x}}}  + O\left( \frac{1}{\log^{1/(6k)} \eta} \log_R p \right)
\end{split}
\end{equation}
thanks to~\eqref{tank}, \eqref{R-def}.
Combining the bounds, we see that
$$ c'_{p,t_{0,j},t_{1,j},t_{2,j}} =  - \frac{1}{p^{\frac{1+it_{0,j}}{\log x}}}  + O\left( \min\left( \frac{\log_R p}{\log^{1/(6k)} \eta}, \exp( - 2\log_R p ) \right) \right)
$$
for all $p \geq C_0$.
From Mertens' theorem (\eqref{mertens} for $\log_R p \leq \log^{1/(100k)} \eta$ and~\eqref{mertens-2} for $\log_R p > \log^{1/(100k)} \eta$) we have
$$
\sum_p \frac{\min\left( \frac{1}{\log^{1/(6k)} \eta} \log_R p, \exp( - 2\log_R p ) \right) }{p} \ll \frac{1}{\log^{1/(7k)} \eta}$$
(say). We conclude that
$$\prod_{p \geq C_0} E_{p,t_{0,1},\dots,t_{2,k}} = \exp\left( O\left( \frac{1}{\log^{1/(7k)} \eta} \right) \right)
\prod_{p \geq C_0} \left(1 - \sum_{j=1}^k \frac{1}{p^{1+\frac{1+it_{0,j}}{\log x}}}\right).$$
The function
$$ \prod_{p \geq C_0} \frac{1 - \sum_{j=1}^k \frac{1}{p^{s_j}}}{\prod_{j=1}^k (1 - \frac{1}{p^{s_j}})}$$
converges to a holomorphic function of $s_1,\dots,s_k$ in the polydisk $\prod_{j=1}^k \{ s_j: |s_j-1| \leq 1/2\}$ which is bounded in magnitude by $O(1)$ (since each factor here is $1+O(1/p^2)$).  From the Cauchy integral formula we conclude that
$$ \prod_{p \geq C_0} \frac{1 - \sum_{j=1}^k \frac{1}{p^{s_j}}}{\prod_{j=1}^k (1 - \frac{1}{p^{s_j}})}
= \prod_{p \geq C_0} \frac{1 - \sum_{j=1}^k \frac{1}{p}}{\prod_{j=1}^k (1 - \frac{1}{p})} ( 1 + \max(|s_1-1|, \dots,|s_k-1|) )$$
when $|s_1-1|,\dots,|s_k-1| \leq \frac{1}{4}$.  Observing from~\eqref{betap-def} that
$$ \frac{1 - \sum_{j=1}^k \frac{1}{p}}{\prod_{j=1}^k (1 - \frac{1}{p})} = \beta_p$$
for $p \geq C_0$, we conclude (using~\eqref{tank}) that
$$ \prod_{p \geq C_0} \frac{1 - \sum_{j=1}^k \frac{1}{p^{1+\frac{1+it_{0,j}}{\log x}}}}{\prod_{j=1}^k \left(1-\frac{1}{p^{1+\frac{1+it_{0,j}}{\log x}}}\right)} = \exp\left( O\left( \frac{1+|t|}{\log x} \right) \right) \prod_{p \geq C_0} \beta_p$$
and thus (by~\eqref{tank}, \eqref{R-def})
\begin{equation}\label{cake}
\prod_{p \geq C_0} E_{p,t_{0,1},\dots,t_{2,k}} = \exp\left( O\left( \frac{1}{\log^{1/(7k)} \eta} \right) \right)
\prod_{p \geq C_0} \beta_p \prod_{j=1}^k \left(1-\frac{1}{p^{1+\frac{1+it_{0,j}}{\log x}}}\right).
\end{equation}
Now we turn attention to the small primes $p<C_0$.  Using~\eqref{ept}, \eqref{cpl} we have
$$ E_{p,t_{0,1},\dots,t_{2,k}} = \sum_{d_{1},\dots,d_{k} \in \{1,p\}}
\frac{\prod_{1 \leq i < j \leq k} 1_{(d_i,d_j)|h_i-h_j}}{[d_1,\dots,d_k]} \prod_{j=1}^k c_{d_j, t_{0,j}, t_{1,j}, t_{2,j}}
+ O\left(\left ( (1+|t|) \frac{\log C_0}{\log R} \right)^2 \right)$$
for $p \leq C_0$, which we rewrite as
\begin{align*} E_{p,t_{0,1},\dots,t_{2,k}} &= 1 + \sum_{d_{1},\dots,d_{k} \in \{1,p\}: [d_1,\dots,d_k]=p}
\frac{\prod_{1 \leq i < j \leq k} 1_{(d_i,d_j)|h_i-h_j}}{p} \prod_{j=1}^k c_{d_j, t_{0,j}, t_{1,j}, t_{2,j}}\\
&\quad \quad + O\left( \left( (1+|t|) \frac{\log C_0}{\log R} \right)^2 \right)
\end{align*}
From~\eqref{tank}, \eqref{R-def} the error term is certainly $O( \frac{1}{\log^{1/(7k)} \eta} )$.  From~\eqref{cpl-1}, \eqref{cpot}, \eqref{tank}, \eqref{R-def} we similarly have
$$ c_{p,t_{0,j},t_{1,j},t_{2,j}} = -1 + O\left(\frac{1}{\log^{1/(7k)} \eta}\right)$$
for $p \leq C_0$, and thus
$$ c_{d_j,t_{0,j},t_{1,j},t_{2,j}} = \mu(d_j) + O\left(\frac{1}{\log^{1/(7k)} \eta}\right)$$
for $j=0,\dots,k$.  This gives
$$ E_{p,t_{0,1},\dots,t_{2,k}} = 1 + \sum_{d_{1},\dots,d_{k} \in \{1,p\}: [d_1,\dots,d_k] = p}
\mu(d_1) \cdots \mu(d_k) \frac{\prod_{1 \leq i < j \leq k} 1_{(d_i,d_j)|h_i-h_j}}{p} + O\left( \frac{1}{\log^{1/(7k)} \eta} \right).$$
If the $h_i$ occupy $m$ distinct residue classes $b_1,\dots,b_m$ modulo $p$, then the constraint $\prod_{1 \leq i < j \leq k} 1_{(d_i,d_j)|h_i-h_j}$ constrains the index set $\{ i: d_i = p\}$ to be a subset of one of the sets $\{ i: h_i = b_j\ (p)\}$ for $j=1,\dots,m$, which must be non-empty if $[d_1,\dots,d_k]$ is to equal $p$.  From the alternating sign of the M\"obius function, each $j$ has a net contribution of $-\frac{1}{p}$ to the above sum, thus
$$ E_{p,t_{0,1},\dots,t_{2,k}} = 1 - \frac{m}{p} + O\left( \frac{1}{\log^{1/(7k)} \eta} \right).$$
From~\eqref{betap-def} we have
$$ \beta_p = \left(1-\frac{m}{p}\right) \left(1-\frac{1}{p}\right)^{-k}$$
and thus
$$ E_{p,t_{0,1},\dots,t_{2,k}} = \left(1-\frac{1}{p}\right)^k \beta_p + O\left( \frac{1}{\log^{1/(7k)} \eta} \right);$$
by Taylor expansion and~\eqref{tank} we then have
$$ E_{p,t_{0,1},\dots,t_{2,k}} = \beta_p \prod_{j=1}^k \left(1-\frac{1}{p^{1+\frac{1+it_{0,j}}{\log x}}}\right) + O\left( \frac{1}{\log^{1/(7k)} \eta} \right)$$
for $p < C_0$.
If we now fix $C_0$ so that all the previous estimates are justified, we have
$$ \prod_{p < C_0} E_{p,t_{0,1},\dots,t_{2,k}}  = \prod_{p < C_0} \beta_p \prod_{j=1}^k \left(1-\frac{1}{p^{1+\frac{1+it_{0,j}}{\log x}}}\right)  + O\left( \frac{1}{\log^{1/(7k)} \eta} \right)$$
and hence by~\eqref{cake}, \eqref{mertens-2}, \eqref{ss-def}
\begin{align*}
\prod_{p} E_{p,t_{0,1},\dots,t_{2,k}}  &= {\mathfrak S} \prod_{p} \prod_{j=1}^k \left(1-\frac{1}{p^{1+\frac{1+it_{0,j}}{\log x}}}\right) + O\left( \frac{1}{\log^{1/(7k)} \eta} \prod_{p}\left(1-\frac{1}{p^{1+\frac{1}{\log x}}}\right)^k\right)\\
&= {\mathfrak S} \prod_{p} \prod_{j=1}^k \left(1-\frac{1}{p^{1+\frac{1+it_{0,j}}{\log x}}}\right) + O\left(\frac{\log^{-k}x}{\log^{1/(7k)} \eta}\right).
\end{align*}
From the Euler product formula~\eqref{prod-p} as well as~\eqref{tank}, we conclude that
$$ \prod_{p} E_{p,t_{0,1},\dots,t_{2,k}}  = {\mathfrak S} \log^{-k} x \prod_{j=1}^k (1+t_{0,j}) + O\left( \frac{\log^{-k} x}{\log^{1/(7k)} \eta} \right).$$
Inserting this bound into~\eqref{ep-int}, and using~\eqref{f-decay}, \eqref{t-decay} to remove the restriction~\eqref{tank}, we can thus write the left-hand side of~\eqref{dddr} as
$$ \approx {\mathfrak S} \int_{\R^{3k}} \prod_{j=1}^k (1+t_{0,j}) F_j(t_{0,j}) f(t_{1,j}) f(t_{2,j})\ dt_{0,j} dt_{1,j} dt_{2,j}.$$
Applying~\eqref{f-inc}, \eqref{Fj-ident}, this is $\approx {\mathfrak S}$, giving the claim.  This (finally!) concludes the proof of Theorem~\ref{main}.

\bibliography{siegelrefs}
\bibliographystyle{plain}

\end{document}